\newcommand{\mbf}[1]{\mathbf{#1}}
\newcommand{\mcl}[1]{\mathcal{#1}}
\newcommand{\argmin}{\mathrm{arg}\!\min}
\newcommand{\ts}[1]{{\textstyle#1}}
\newcommand{\bq}{\begin{eqnarray}}
\newcommand{\eq}{\end{eqnarray}}
\newtheorem{theorem}{Theorem}[section]
\newtheorem{lemma}[theorem]{Lemma}
\newenvironment{proof}[1][Proof]{\begin{trivlist}
\item[\hskip \labelsep {\bfseries #1}]}{\end{trivlist}}
\newenvironment{remark}[1][Remark]{\begin{trivlist}
\item[\hskip \labelsep {\bfseries #1}]}{\end{trivlist}}
\newcommand{\qed}{\nobreak \ifvmode \relax \else
      \ifdim\lastskip<1.5em \hskip-\lastskip
      \hskip1.5em plus0em minus0.5em \fi \nobreak
      \vrule height0.75em width0.5em depth0.25em\fi}
\begin{document}

\title[A penalty method for inverse problems]{A penalty method for PDE-constrained optimization in inverse problems}
\author{T. van Leeuwen$^1$ and F.J. Herrmann$^2$}
\address{$^1$Mathematical Institute, Utrecht University, Utrecht, the Netherlands.\\
$^2$ Dept. of Earth, Ocean and Atmospheric Sciences, University of British Columbia, Vancouver (BC), Canada.}
\ead{T.vanLeeuwen@uu.nl}

\begin{abstract}
Many inverse and parameter estimation problems can be written as PDE-constrained optimization problems. The goal is to infer the parameters, typically coefficients of the PDE, from partial measurements of the solutions of the PDE for several right-hand-sides. Such PDE-constrained problems can be solved by finding a stationary point of the Lagrangian, which entails simultaneously updating the parameters and the (adjoint) state variables. For large-scale problems, such an \emph{all-at-once} approach is not feasible as it requires storing all the state variables. In this case one usually resorts to a \emph{reduced} approach where the constraints are explicitly eliminated (at each iteration) by solving the PDEs. These two approaches, and variations thereof, are the main workhorses for solving PDE-constrained optimization problems arising from inverse problems. In this paper, we present an alternative method that aims to combine the advantages of both approaches. Our method is based on a quadratic penalty formulation of the constrained optimization problem. By eliminating the state variable, we develop an efficient algorithm that has roughly the same computational complexity as the conventional \emph{reduced} approach while exploiting a larger search space. Numerical results show that this method indeed reduces some of the non-linearity of the problem and is less sensitive to the initial iterate.
\end{abstract}

\maketitle

\section{Introduction}
In inverse problems, the goal is to infer physical parameters (e.g., density, soundspeed or conductivity) from indirect observations. When the underlying model is described by a partial differential equation (PDE) (e.g., the wave-equation or Maxwell's equations), the observed data are typically partial measurements of the solutions of the PDE for multiple right-hand-sides. The parameters typically appear as coefficients in the PDE. These problems arise in many applications such as geophysics \cite{tarantola82,Haber2004,Epanomeritakis08,vanLeeuwen20133DFWI}, medical imaging \cite{Abdoulaev2005,Wang2015} and non-destructive testing.

For linear PDEs, the inverse problem can be formulated (after discretization) as a constrained optimization problem of the form
\bq
\label{eq:constr}
\min_{\mbf{m},\mbf{u}} \ts{\frac{1}{2}}||P\mathbf{u} - \mathbf{d}||_2^2  \quad 
\mbox{s.t.} \quad A(\mathbf{m})\mathbf{u} = \mathbf{q},
\eq
where $\mathbf{m}\in\mathbb{R}^{M}$ represents the (gridded) parameter of interest, $A(\mathbf{m})\in\mathbb{C}^{N\times N}$ and $\mathbf{q}\in\mathbb{C}^{N}$ represent the discretized PDE and source term, $\mathbf{u}\in\mathbb{C}^{N}$ is the state variable and $\mathbf{d}\in \mathbb{C}^{L}$ are the observed data. The measurement process is modelled by sampling the state with $P\in \mathbb{R}^{L\times N}$. Throughout the paper $^T$ denotes the (complex-conjugate) transpose.  

Typically, measurements are made from multiple, say $K$, independent experiments,
in which case $\mathbf{u} \in \mathbb{C}^{KN}$ is a block vector containing the state variables for all the experiments. Likewise, $\mathbf{q}\in \mathbb{C}^{KN}$ and $\mathbf{d}\in \mathbb{C}^{KL}$ are block vectors containing the right-hand-sides and observations for all experiments. The matrices $A(\mathbf{m})$ and $P$ will be block-diagonal matrices in this case. Typical sizes for $M,N,K,L$ for seismic inverse problems are listed in table \ref{table:sizes}.

In practice, one usually includes a regularization term in the formulation (\ref{eq:constr}) to mitigate the ill-posedness of the problem. To simplify the discussion, however, we ignore such terms with the understanding that appropriate regularization terms can be added when required.

\subsection{All-at-once and reduced methods}
In applications arising from inverse problems, the constrained problem (\ref{eq:constr}) is typically solved using the method of Lagrange multipliers \cite{Haber2000,Grote2014} or sequential quadratic programming (SQP) \cite{Dennis1998,Heinkenschloss2008}. This entails optimizing over both the parameters, states and the Lagrange multipliers (or adjoint-state variables) simultaneously. While such \emph{all-at-once} approaches are often very attractive from an optimization point-of-view, they are typically not feasible for large-scale problems since we cannot afford to store the state variables for all $K$ experiments simultaneously. Instead, the so-called \emph{reduced} approach is based on a (block) elimination of the constraints to formulate an unconstrained optimization problem over the parameters:
\bq
\label{eq:red}
\min_{\mbf{m}} \ts{\frac{1}{2}}||PA(\mathbf{m})^{-1}\mathbf{q} - \mathbf{d}||_2^2.
\eq
While this eliminates the need to store the full state variables for all $K$ experiments, evaluation of the objective and its gradient requires PDE-solves. Moreover, by eliminating the constraints we have dramatically reduced the search-space, thus arguably making it more difficult to find an appropriate minimizer. Note also that the dependency of the objective on $\mathbf{m}$ is now through $A(\mathbf{m})^{-1}\mathbf{q}$ in stead of through $A(\mathbf{m})\mathbf{u}$. For linear PDEs, the latter can often be made to depend linearly on $\mathbf{m}$ while the dependency of $A(\mathbf{m})^{-1}\mathbf{q}$ on $\mathbf{m}$ is typically non-linear.

\subsection{Motivation}
The main motivation for this work is the observation that the stated inverse problem would be much easier to solve if we had a \emph{complete} measurement of the state (i.e., $P$ is invertible). In this case, we could reconstruct the state from the data as $\mathbf{u} = P^{-1}\mathbf{d}$ and subsequently recover the parameter by solving
\bq
\label{eq:eqnerror}
\min_{\mathbf{m}} \textstyle{\frac{1}{2}}\|A(\mathbf{m})\mathbf{u} - \mathbf{q}\|_2^2,
\eq
which for many linear PDEs would lead to a linear least-squares problem. This approach is known in the literature as the \emph{equation-error approach} \cite{Richter1981,Banerjee2013}. When we do not have complete measurements, this method does not apply directly since we cannot invert $P$. We can, however, aim to recover the state by solving the following (inconsistent) overdetermined system
\bq
\label{eq:apuqd}
\left(\begin{array}{c}
A(\mathbf{m})\\
P
\end{array}\right)\mathbf{u}\approx
\left(\begin{array}{c}
\mathbf{q}\\
\mathbf{d}
\end{array}\right),
\eq
which combines the physics and the data. We can subsequently use the obtained estimate of the state to estimate $\mathbf{m}$ by solving (\ref{eq:eqnerror}). These steps can be repeated in an alternating fashion as needed. In a previous paper \cite{vanLeeuwen2013Penalty1}, we proposed this methodology for seismic inversion and coined it Wavefield Reconstruction Inversion (WRI). We showed -- via numerical experiments -- that this approach can mitigate some of the (notorious) non-linearity of the seismic inverse problem. In the current paper we seek to analyse this approach in more detail and broaden the scope of its application to inverse problems involving PDEs.

\subsection{Contributions and outline}
We give the above sketched method a sound theoretical basis by showing that it can be derived from a \emph{penalty} formulation of the constrained problem:
\bq
\label{eq:pen}
\min_{\mbf{m},\mbf{u}} \ts{\frac{1}{2}}||P\mathbf{u} - \mathbf{d}||_2^2 + \frac{\lambda}{2}\|A(\mathbf{m})\mathbf{u} - \mathbf{q}\|_2^2,
\eq
the solution of which (theoretically) satisfies the optimality conditions of the constrained problem (\ref{eq:constr}) as $\lambda\rightarrow\infty$.
Such reformulations of the constrained problem are well-known but are, to our best knowledge, not being applied to inverse problems. 

The main contribution of this paper is the development of an efficient algorithm based on the penalty formulation for inverse problems and the insight that we can approximate the solution of the constrained problem (\ref{eq:constr}) up to arbitrary finite precision with a finite $\lambda$. In (ill-posed) inverse problems, we often do not require very high precision because we can only hope to resolve certain components of $\mathbf{m}$ anyway. We can understand this by realizing that not all constraints are equally important; those that constrain the null-space components of $\mathbf{m}$ need not be enforced as strictly. Thus, the parameter $\lambda$ need only be large enough to enforce the dominant constraints. The numerical experiments suggest that a single fixed value of $\lambda$ is typically sufficient.

Our approach is based on the elimination of the state variable, $\mbf{u}$, from (\ref{eq:pen}). This reduces the dimensionality of the optimization problem in a similar fashion as the \emph{reduced} approach (\ref{eq:red}) does for the constrained formulation (\ref{eq:constr}) by solving $K$ systems of equations. This elimination leads to a cost function $\phi_{\lambda}(\mathbf{m})$ whose gradient and Hessian can be readily computed. The main difference is that the state $\mathbf{u}$ in this case is not defined by solving the PDE, but instead is solved from an overdetermined system that involves both the PDE \emph{and} the data (\ref{eq:apuqd}). Due to the special block-structure of the problems under consideration, this elimination can be done efficiently, leading to a tractable algorithm. Contrary to the conventional \emph{reduced} approach, the resulting algorithm does \emph{not} enforce the constraints at each iteration and arguably leads to a less non-linear problem in $\mbf{m}$. It is outside the scope of the current paper to give a rigorous proof of this statement, but we present some numerical evidence to support this conjecture.

The outline of the paper is as follows. First, we give a brief overview of the constrained and penalty formulations in sections \ref{lagrange} and \ref{penalty}. The main theoretical results are presented in section \ref{varpro} while a detailed description of the proposed algorithm is given in section \ref{algorithm}. Here, we also compare the penalty approach to both the all-at-once and the reduced approaches in terms of algorithmic complexity. Numerical examples on a 1D DC-resistivity and 2D ultrasound and seismic tomography problems are given in section \ref{examples}. Possible extensions and open problems are discussed in section \ref{discussion} and section \ref{conclusion} gives the conclusions.

\section{All-at-once and reduced methods}
\label{lagrange}

A popular approach to solving constrained problems of the form (\ref{eq:constr}) is based on the corresponding Lagrangian:
\bq
\label{eq:Lagrangian}
\mcl{L}(\mbf{m},\mbf{u},\mbf{v})=  \ts{\frac{1}{2}}||P\mathbf{u} - \mathbf{d}||_2^2 
+ \mbf{v}^T\!\left(A(\mathbf{m})\mathbf{u} - \mathbf{q}\right),
\eq
where $\mbf{v}\in\mathbb{C}^{KN}$ is the Lagrange multiplier or adjoint-state variable \cite{Nocedal,Haber2000}. A necessary condition for a solution $(\mathbf{m}^*,\mathbf{u}^*,\mbf{v}^*)$ of the constrained problem (\ref{eq:constr}) is that it is a stationary point of the Lagrangian, i.e. $\nabla\mcl{L}(\mathbf{m}^*,\mathbf{u}^*,\mbf{v}^*) = 0$. The gradient and Hessian of the Lagrangian are given by 
\bq
\nabla\mcl{L}(\mbf{m},\mbf{u},\mbf{v}) &=& 
\left(
\begin{array}{c}
\mcl{L}_{\mbf{m}}\\
\mcl{L}_{\mbf{u}}\\
\mcl{L}_{\mbf{v}}\\
\end{array}
\right)
=\left(
\begin{array}{c}
G(\mbf{m},\mbf{u})^T\mathbf{v},\\
A(\mathbf{m})^T\mathbf{v} + P^T\!(P\mathbf{u} - \mathbf{d}),\\
A(\mathbf{m})\mathbf{u} - \mathbf{q},
\end{array}
\right),
\eq
and
\bq
\nabla^2\mcl{L}(\mbf{m},\mbf{u},\mbf{v}) &=& 
\left(
\begin{array}{ccc}
R(\mbf{m},\mbf{u},\mbf{v})&K(\mbf{m},\mbf{v})^T&G(\mbf{m},\mbf{u})^T\\
K(\mbf{m},\mbf{v})&P^T\!P&A(\mbf{m})^T\\
G(\mbf{m},\mbf{u})&A(\mbf{m})&0\\
\end{array}
\right),
\eq
where
\bq
G(\mbf{m},\mbf{u}) =\frac{\partial A(\mbf{m})\mbf{u}}{\partial \mbf{m}},\,\,
K(\mbf{m},\mbf{v}) = \frac{\partial A(\mbf{m})^T\mbf{v}}{\partial \mbf{m}},\nonumber\\
R(\mbf{m},\mbf{u},\mbf{v}) = \frac{\partial G(\mbf{m},\mbf{u})^T\mbf{v}}{\partial \mbf{m}}\nonumber.
\eq
These Jacobian matrices are typically sparse when $A$ is sparse and can be computed analytically.


\subsection{All-at-once approach}
So-called all-at-once approaches find such a stationary point by applying a Newton-like method to the Lagrangian \cite{Haber2000,Haber2001}. A basic algorithm for finding a stationary point of the Lagrangian up to a given tolerance $\epsilon$ is given in Algorithm \ref{alg:allatonce}. If the algorithm converges it returns iterates $(\mathbf{m}^*,\mathbf{u}^*,\mbf{v}^*)$ such that $\|\nabla \mcl{L}(\mathbf{m}^*,\mathbf{u}^*,\mbf{v}^*)\|_2\leq \epsilon$.
\begin{algorithm}
\caption{Basic Newton algorithm for finding a stationary point of the Lagrangian via the all-at-once method}
\label{alg:allatonce}
\begin{algorithmic}
\REQUIRE{initial guess $\mbf{m}^0, \mbf{u}^0, \mbf{v}^0$, tolerance $\epsilon$}
\STATE{$k=0$}
\WHILE{$\|\nabla\mcl{L}(\mbf{m}^k,\mbf{u}^k,\mbf{v}^k)\|_2 \geq \epsilon$}
\vspace{1mm}
\STATE{$
\left(
\begin{array}{c}
\delta\mbf{m}^k\\
\delta\mbf{u}^k\\
\delta\mbf{v}^k\\
\end{array}
\right)
= -
\left(\nabla^{2}\mcl{L}(\mbf{m}^k,\mbf{u}^k,\mbf{v}^k)\right)^{-1}\nabla\mcl{L}(\mbf{m}^k,\mbf{u}^k,\mbf{v}^k)$}
\vspace{1mm}
\STATE{determine steplength $\alpha^k \in (0,1]$}
\vspace{1mm}
\STATE{$\mathbf{m}^{k+1} = \mathbf{m}^k + \alpha^k\delta\mbf{m}^k$}
\vspace{1mm}
\STATE{$\mathbf{u}^{k+1} = \mathbf{u}^k + \alpha^k\delta\mbf{u}^k$}
\vspace{1mm}
\STATE{$\mathbf{v}^{k+1} = \mathbf{v}^k + \alpha^k\delta\mbf{v}^k$}
\STATE{$k = k + 1$}
\ENDWHILE
\end{algorithmic}
\end{algorithm}
Many variants of Algorithm \ref{alg:allatonce} exist and may include preconditioning, inexact solves of the KKT system $\left(\nabla^2\mathcal{L}\right)^{-1}\nabla\mathcal{L}$ and a linesearch to ensure global convergence \cite{Haber2001,Biros,Grote2014}. For an extensive overview we refer to \cite{Herzog2010}.

An advantage of such an all-at-once approach is that it eliminates the need to
solve the PDEs explicitly; the constraints are only (approximately) satisfied upon convergence. However, such an approach is not feasible for the applications we have in mind because it involves simultaneously updating (and hence storing) all the variables.

\subsection{Reduced approach}
In inverse problems one usually considers a \emph{reduced} formulation that is obtained by eliminating the constraints from (\ref{eq:constr}). This results in an unconstrained optimization problem:
\bq
\min_{\mbf{m}} \left\{\phi(\mbf{m}) = \ts{\frac{1}{2}}||P\mbf{u}_{\mathrm{red}}(\mbf{m}) - \mbf{d}||_2^2\right\},
\label{eq:redL}
\eq
where $\mbf{u}_{\mathrm{red}}(\mbf{m}) = A(\mathbf{m})^{-1}\mathbf{q}$. The resulting optimization problem has a much smaller dimension and can be solved using black-box non-linear optimization methods. In contrast to the \emph{all-at-once} method, the constraints are satisfied at each iteration.

The gradient and the Hessian of $\phi$ are given by
\bq
\nabla\phi(\mbf{m}) &=& G(\mbf{m},\mbf{u}_{\mathrm{red}})^T\mbf{v}_{\mathrm{red}},\\
\nabla^2\phi(\mbf{m}) &=& G(\mbf{m},\mbf{u}_{\mathrm{red}})^TA(\mbf{m})^{-T}P^T\!PA(\mbf{m})^{-1}G(\mbf{m},\mbf{u}_{\mathrm{red}})\nonumber\\
&& - K(\mbf{m},\mbf{v}_{\mathrm{red}})^TA(\mbf{m})^{-1}G(\mbf{m},\mbf{u}_{\mathrm{red}})
\nonumber\\ 
&&  - G(\mbf{m},\mbf{u}_{\mathrm{red}})^TA(\mbf{m})^{-T}K(\mbf{m},\mbf{v}_{\mathrm{red}})
\nonumber\\
&& + R(\mbf{m},\mbf{u}_{\mathrm{red}},\mbf{v}_{\mathrm{red}}),
\eq
where $\mbf{v}_{\mathrm{red}} = A(\mbf{m})^{-T}P\left(\mbf{d} - P\mbf{u}_{\mathrm{red}}\right)$.

A basic (Gauss-Newton) algorithm for minimizing $\phi(\mathbf{m})$ is given in 
Algorithm \ref{alg:reduced}. Note that this corresponds to a block-elimination of the KKT system and the iterates automatically satisfy $\mcl{L}_{\mathbf{u}}(\mbf{m}^k,\mbf{u}^k_{\mathrm{red}},\mbf{v}^k_{\mathrm{red}}) = \mcl{L}_{\mathbf{v}}(\mbf{m}^k,\mbf{u}^k_{\mathrm{red}},\mbf{v}^k_{\mathrm{red}}) = 0$. If the algorithm terminates successfully, the final iterates $(\mbf{m}^*,\mbf{u}^*_{\mathrm{red}},\mbf{v}^*_{\mathrm{red}})$ additionally satisfy $\|\mcl{L}_{\mathbf{m}}(\mbf{m}^*,\mbf{u}^*_{\mathrm{red}},\mbf{v}^*_{\mathrm{red}})\|_2 \leq \epsilon$, so that 
$\|\nabla\mcl{L}(\mbf{m}^*,\mbf{u}^*_{\mathrm{red}},\mbf{v}^*_{\mathrm{red}})\|_2\leq \epsilon$.

\begin{algorithm}
\caption{Basic Gauss-Newton algorithm for find a stationary point of the Lagrangian via the reduced method}
\label{alg:reduced}
\begin{algorithmic}
\REQUIRE{initial guess $\mbf{m}^0$, tolerance $\epsilon$}
\STATE{$k=0$}
\STATE{$\mathbf{u}^{0}_{\mathrm{red}}  = A(\mathbf{m}^{0})^{-1}\mathbf{q}$}
\vspace{1mm}
\STATE{$\mathbf{v}^{0}_{\mathrm{red}}  = A(\mathbf{m}^{0})^{-T}P(\mathbf{d} - P\mathbf{u}^{0}_{\mathrm{red}})$}
\vspace{1mm}
\WHILE{$\|\mcl{L}_{\mathbf{m}}(\mbf{m}^k,\mbf{u}^k_{\mathrm{red}},\mbf{v}^k_{\mathrm{red}})\|_2 \geq \epsilon$}
\vspace{1mm}
\STATE{$\mathbf{g}^k_{\mathrm{red}}    = G(\mathbf{m}^k,\mathbf{u}^k_{\mathrm{red}})^T\mathbf{v}^k_{\mathrm{red}}$}
\vspace{2mm}
\STATE{$H^k_{\mathrm{red}} = G(\mbf{m}^k,\mbf{u}^k_{\mathrm{red}})^TA(\mbf{m}^k)^{-T}P^T\!PA(\mbf{m}^k)^{-1}G(\mbf{m}^k,\mbf{u}^k_{\mathrm{red}})$}
\vspace{1mm}
\STATE{determine steplength $\alpha^k \in (0,1]$}
\vspace{1mm}
\STATE{$\mathbf{m}^{k+1} =\mathbf{m}^k - \alpha^k \left(H_{\mathrm{red}}^{k}\right)^{-1}\mathbf{g}^k_{\mathrm{red}}$}
\vspace{1mm}
\STATE{$\mathbf{u}^{k+1}_{\mathrm{red}}  = A(\mathbf{m}^{k+1})^{-1}\mathbf{q}$}
\vspace{2mm}
\STATE{$\mathbf{v}^{k+1}_{\mathrm{red}}  = A(\mathbf{m}^{k+1})^{-T}P(\mathbf{d} - P\mathbf{u}^{k+1}_{\mathrm{red}})$}
\vspace{1mm}
\STATE{$k = k + 1$}
\ENDWHILE
\end{algorithmic}
\end{algorithm}

A disadvantage of this approach is that it requires the solution of the PDEs at each update, making it computationally expensive.  It also strictly enforces the constraint at each iteration, possibly leading to a very non-linear problem in $\mbf{m}$. 

\section{Penalty and augmented Lagrangian methods}
\label{penalty}
It is impossible to do justice to the wealth of research that has been done on penalty and augmented Lagrangian methods in one section. Instead, we give a brief overview high-lighting the main characteristics of a few basic approaches and their limitations when applied to inverse problems. 

A constrained optimization problem of the form (\ref{eq:constr}) can be recast as an unconstrained problem by introducing a non-negative penalty function $\pi: \mathbb{C}^{N}\rightarrow\mathbb{R}_{\geq 0}$ and a penalty parameter $\lambda>0$ as follows
\bq
\label{eq:penalty}
\min_{\mbf{m},\mbf{u}} \ts{\frac{1}{2}}||P\mathbf{u} - \mathbf{d}||_2^2 + \lambda\pi(\mbf{A}(\mbf{m})\mbf{u} - \mbf{q}).
\eq
The idea is that any departure from the constraint is penalized so that the solution of this unconstrained problem will coincide with that of the constrained problem when $\lambda$ is large enough.

A quadratic penalty function $\pi(\cdot) = \ts{\frac{1}{2}}||\cdot||_2^2$ leads to a differentiable unconstrained optimization problem (\ref{eq:penalty}) whose minimizer coincides with the solution of the constrained optimization problem (\ref{eq:constr}) when $\lambda \rightarrow \infty$ \cite[Thm. 17.1]{Nocedal}. Practical algorithms rely on repeatedly solving the unconstrained problem for increasing values of $\lambda$.

A common concern with this approach is that the Hessian may become increasingly ill-conditioned as $\lambda\rightarrow\infty$ when there are fewer constraints than variables. For PDE-constrained optimization in inverse problems, there are enough constraints ($A(\mbf{m})$ is invertible) to prevent this. We discuss this limiting case in more detail in section \ref{algorithm}.

For certain non-smooth penalty functions, such as $\pi(\cdot) = ||\cdot||_1$, the minimizer of (\ref{eq:penalty}) is a solution of the constrained problem for \emph{any} $\lambda \geq \lambda^*$ for some $\lambda^*$ \cite[Thm. 17.3]{Nocedal}. In practice, a continuation strategy is used to find a suitable value for $\lambda$. An advantage of this approach is that $\lambda$ does not become arbitrarily large, thus avoiding the ill-conditioning problems mentioned above. A disadvantage is that the resulting unconstrained problem is no longer differentiable. With large-scale applications in mind, we therefore do not consider exact penalty methods any further in this paper.

Another approach that avoids having to increase $\lambda$ to infinity is the \emph{augmented Lagrangian} approach (cf. \cite{Nocedal}). In this approach, a quadratic penalty $\lambda\|\mbf{A}(\mbf{m})\mbf{u} - \mbf{q}\|_2^2$ is added to the Lagrangian (\ref{eq:Lagrangian}). A standard approach to solve the constrained problem based on the augmented Lagrangian is the \emph{Alternating direction method of multipliers} (ADMM). In its most basic form it relies on minimizing the augmented Lagrangian w.r.t. $(\mbf{m}, \mbf{u})$ and subsequently updating the multiplier $\mbf{v}$ and the penalty parameter $\lambda$ \cite{Eckstein2012a,Curtis2014}. This would require us to store the multipliers, which is not feasible for the problems we have in mind.

In the next two sections, we discuss a computationally efficient algorithm for solving the constrained optimization problem (\ref{eq:constr}) based on a quadratic penalty formulation. This formulation is attractive because it leads to a differentiable, unconstrained, optimization problem. Moreover, the optimization in $\mbf{u}$ has a closed-form solution which can be computed efficiently, making it an ideal candidate for the type of problems we have in mind.

\section{A reduced penalty method}
\label{varpro}

Using a quadratic penalty function, the constrained problem (\ref{eq:constr}) is reformulated as
\bq
\label{eq:rpenalty}
\min_{\mbf{m},\mbf{u}} \left\{\mcl{P}(\mbf{m},\mbf{u}) = \ts{\frac{1}{2}}||P\mathbf{u} - \mathbf{d}||_2^2 + \ts{\frac{1}{2}}\lambda||A(\mbf{m})\mbf{u} - \mbf{q}||_2^2 \right\}.
\eq
The gradient and Hessian of $\mcl{P}$ are given by
\bq
\nabla\mcl{P} = \left(\begin{array}{c}
\mcl{P}_{\mbf{m}}\\
\mcl{P}_{\mbf{u}}\\
\end{array}
\right)
= 
\left(\begin{array}{c}
\lambda G(\mbf{m},\mbf{u})^T\!\left(A(\mbf{m})\mbf{u} - \mbf{q}\right)\\
P^T\!(P\mbf{u} - \mbf{d}) + \lambda A(\mbf{m})^T\!(A(\mbf{m})\mbf{u} - \mbf{q})\\
\end{array}
\right),
\eq
and
\bq
\nabla^2\mcl{P} &=&
\left(
\begin{array}{cc}
\mcl{P}_{\mbf{m},\mbf{m}}&\mcl{P}_{\mbf{m},\mbf{u}}\\
\mcl{P}_{\mbf{u},\mbf{m}}&\mcl{P}_{\mbf{u},\mbf{u}}\\
\end{array}
\right),
\eq
where
\bq
\mcl{P}_{\mbf{m},\mbf{m}} &=& \lambda (G(\mbf{m},\mbf{u})^TG(\mbf{m},\mbf{u}) + R(\mbf{m},\mbf{u},A(\mbf{m})\mbf{u}-\mbf{q})),\\
\mcl{P}_{\mbf{u},\mbf{u}} &=&P^T\!P + \lambda A(\mbf{m})^T\!A(\mbf{m}),\\
\label{eq:Hl}
\mcl{P}_{\mbf{m},\mbf{u}} &=&\lambda (K(\mbf{m},A(\mbf{m})\mbf{u}-\mbf{q}) + A(\mbf{m})^TG(\mbf{m},\mbf{u})).
\eq
Of course, optimization in the full $(\mbf{m},\mbf{u})$-space is not feasible for large-scale problems, so we  eliminate $\mbf{u}$ by introducing 
\bq
\label{eq:ul}
\mbf{u}_{\lambda}(\mbf{m}) = \argmin_{\mbf{u}} \mcl{P}(\mbf{m},\mbf{u}),
\eq
and define a reduced objective:
\bq
\label{eq:redpenalty}
\phi_{\lambda}(\mbf{m}) = \mcl{P}(\mbf{m},\mbf{u}_{\lambda}(\mbf{m})).
\eq
The optimization problem for the state (\ref{eq:ul}) has a closed-form solution:
\[
\mbf{u}_{\lambda} = \left(A(\mathbf{m})^T\!A(\mathbf{m}) + \lambda^{-1}P^T\!P\right)^{-1}\left(A(\mathbf{m})^T\mbf{q} + \lambda^{-1}P\mbf{d}\right).
\]
The modified system $A^T\!A + \lambda^{-1}P^T\!P$ is a low-rank modification of the original PDE and incorporates the measurements in the PDE solve. This is the main difference with the conventional reduced approach (cf. Algorithm \ref{alg:reduced}); the estimate of the state is not only based on the physics and the current model, but also on the data.

Following \cite[Thm. 1]{Aravkin2012c}, it is readily verified that the gradient and Hessian of $\phi_{\lambda}$ are given by 
\bq
\label{eq:gradpen}
\nabla\phi_{\lambda}(\mbf{m}) &=& \mcl{P}_{\mbf{m}}(\mbf{m},{\mbf{u}}_{\lambda}),\\
\label{eq:hesspen}
\nabla^2\phi_{\lambda}(\mbf{m}) &=& \mcl{P}_{\mbf{m},\mbf{m}}(\mbf{m},{\mbf{u}}_{\lambda}) \nonumber\\
&&-\mcl{P}_{\mbf{m},\mbf{u}}(\mbf{m},{\mbf{u}}_{\lambda})\left(\mcl{P}_{\mbf{u},\mbf{u}}(\mbf{m},{\mbf{u}}_{\lambda})\right)^{-1}\mcl{P}_{\mbf{u},\mbf{m}}(\mbf{m},{\mbf{u}}_{\lambda}).
\eq
Note that $\nabla^2\phi_{\lambda}$ is the Schur complement of $\nabla^2\mcl{P}$.

A basic Gauss-Newton algorithm for minimizing $\phi_{\lambda}$ is shown in Algorithm \ref{alg:penalty}. Note that the computation of the adjoint-state $\mathbf{v}_{\lambda}$ does \emph{not} require an additional PDE-solve in this algorithm. Instead, the forward and adjoint solve are done simultaneously via the normal equations.

\begin{algorithm}
\caption{Basic Gauss-Newton algorithm for find a stationary point of the Lagrangian via the penalty method}
\label{alg:penalty}
\begin{algorithmic}
\REQUIRE{initial guess $\mbf{m}^0$, penalty parameter $\lambda$, tolerance $\epsilon$}
\STATE{$k=0$}
\STATE{$\mathbf{u}^{0}_{\lambda}  = \left(A(\mathbf{m}^{0})^TA(\mathbf{m}^{0}) + \lambda^{-1}P^T\!P\right)^{-1}\left(A(\mathbf{m}^{0})^T\mbf{q} + \lambda^{-1}P\mbf{d}\right)$}
\STATE{$\mathbf{v}^{0}_{\lambda}  = \lambda(A(\mathbf{m}^{0})\mathbf{u}^{0}_{\lambda} - \mathbf{q})$}

\WHILE{$\|\mcl{L}_{\mathbf{m}}(\mbf{m}^k,\mbf{u}^k_{\lambda},\mbf{v}^k_{\lambda})\|_2 \geq \epsilon$}
\vspace{1mm}
\STATE{$\mathbf{g}^k_{\lambda}  = G(\mathbf{m}^k,\mathbf{u}^k_{\lambda})^T\mathbf{v}^k_{\lambda}$}
\vspace{2mm}
\STATE{$H^k_{\lambda}           = \lambda G^T\left(I - A\left(A^T\!A + \lambda^{-1}P^T\!P\right)^{-1}A^T \right)G$}
\vspace{1mm}
\STATE{determine steplength $\alpha^k \in (0,1]$}
\vspace{1mm}
\STATE{$\mathbf{m}^{k+1}        = \mathbf{m}^k - \alpha^k \left(H_{\lambda}^{k}\right)^{-1}\mathbf{g}^k_{\lambda}$}
\vspace{1mm}
\STATE{$\mathbf{u}^{k+1}_{\lambda}  = \left(A(\mathbf{m}^{k+1})^TA(\mathbf{m}^{k+1}) + \lambda^{-1}P^T\!P\right)^{-1}\left(A(\mathbf{m}^{k+1})^T\mbf{q} + \lambda^{-1}P\mbf{d}\right)$}
\vspace{2mm}
\STATE{$\mathbf{v}^{k+1}_{\lambda}  = \lambda(A(\mathbf{m}^{k+1})\mathbf{u}^{k+1}_{\lambda} - \mathbf{q})$}
\vspace{1mm}
\STATE{$k = k + 1$}
\ENDWHILE
\end{algorithmic}
\end{algorithm}

Next, we show how the states, $\mathbf{u}^k_{\lambda}$ and $\mathbf{v}^k_{\lambda}$, generated by this algorithm  relate to the states generated by the reduced approach and subsequently that if the algorithm successfully terminates the iterates $(\mbf{m}^*, \mathbf{u}^*_{\lambda}, \mathbf{v}^*_{\lambda})$ satisfy 
\[
\|\nabla\mcl{L}(\mbf{m}^*, \mathbf{u}^*_{\lambda}, \mathbf{v}^*_{\lambda})\|_2 \leq \epsilon + \mcl{O}(\lambda^{-1}). 
\]

\begin{lemma}
\label{lemma}
For a fixed $\mbf{m}$, the states $\mathbf{u}_{\lambda}$ and $\mathbf{v}_{\lambda}$ used 
in the reduced penalty approach (Algorithm \ref{alg:penalty}) are related to the states $\mathbf{u}_{\mathrm{red}}$ and 
$\mathbf{v}_{\mathrm{red}}$ used in the reduced approach  (Algorithm \ref{alg:reduced})
as follows
\bq
\mathbf{u}_{\lambda} = \mathbf{u}_{\mathrm{red}} + \mcl{O}(\lambda^{-1}),\\
\mathbf{v}_{\lambda} = \mathbf{v}_{\mathrm{red}} + \mcl{O}(\lambda^{-1}).
\eq
\end{lemma}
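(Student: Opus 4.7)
The plan is to work directly with the closed-form expression
\[
\mathbf{u}_{\lambda} = \left(A^T\!A + \lambda^{-1}P^T\!P\right)^{-1}\left(A^T\mbf{q} + \lambda^{-1}P^T\mbf{d}\right)
\]
and treat $\lambda^{-1}$ as a small parameter. Since $A(\mbf{m})$ is invertible by hypothesis, $A^T\!A$ is nonsingular for fixed $\mbf{m}$, so for $\lambda$ sufficiently large the perturbed matrix $A^T\!A+\lambda^{-1}P^T\!P$ is invertible and admits the Neumann/resolvent expansion
\[
\left(A^T\!A+\lambda^{-1}P^T\!P\right)^{-1} = (A^T\!A)^{-1} - \lambda^{-1}(A^T\!A)^{-1}P^T\!P\,(A^T\!A)^{-1} + \mcl{O}(\lambda^{-2}).
\]
This is the one analytic input I need; the rest is algebraic substitution.

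First I would substitute this expansion into the formula for $\mathbf{u}_{\lambda}$, multiply out, and collect by powers of $\lambda^{-1}$. The zeroth-order term is $(A^T\!A)^{-1}A^T\mbf{q} = A^{-1}\mbf{q} = \mathbf{u}_{\mathrm{red}}$. The $\lambda^{-1}$ coefficient, after simplifying, is exactly $A^{-1}A^{-T}P^T(\mbf{d}-P\mathbf{u}_{\mathrm{red}}) = A^{-1}\mathbf{v}_{\mathrm{red}}$, using the definition of $\mathbf{v}_{\mathrm{red}}$ given in the reduced section. Thus
\[
\mathbf{u}_{\lambda} = \mathbf{u}_{\mathrm{red}} + \lambda^{-1}A^{-1}\mathbf{v}_{\mathrm{red}} + \mcl{O}(\lambda^{-2}),
\]
which gives the first claim.

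For the adjoint state, I use $\mathbf{v}_{\lambda} = \lambda(A\mathbf{u}_{\lambda}-\mbf{q})$. Multiplying the expansion for $\mathbf{u}_{\lambda}$ by $A$ cancels the leading term against $\mbf{q}$ (since $A\mathbf{u}_{\mathrm{red}} = \mbf{q}$), so
\[
A\mathbf{u}_{\lambda} - \mbf{q} = \lambda^{-1}\mathbf{v}_{\mathrm{red}} + \mcl{O}(\lambda^{-2}),
\]
and multiplication by $\lambda$ yields $\mathbf{v}_{\lambda} = \mathbf{v}_{\mathrm{red}} + \mcl{O}(\lambda^{-1})$. The cancellation of the $\mcl{O}(1)$ contribution to $A\mathbf{u}_{\lambda}-\mbf{q}$ is the only subtle point, and it is exactly what one should expect: $\mathbf{v}_{\lambda}$ plays the role of a Lagrange multiplier, and the factor $\lambda$ would blow up anything that did not vanish as $\lambda\to\infty$.

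The main (minor) obstacle is bookkeeping of the $\mcl{O}(\lambda^{-2})$ remainder. To be clean, I would bound it in operator norm using that $\|(A^T\!A)^{-1}\|$ and $\|P\|$ are finite constants depending only on $\mbf{m}$, so that the Neumann remainder is controlled uniformly for $\lambda \geq \lambda_0(\mbf{m})$ with $\lambda_0$ chosen so that $\lambda^{-1}\|(A^T\!A)^{-1}P^T\!P\| < 1$. With that estimate in hand, both asymptotic statements in the lemma follow from the two displayed expansions.
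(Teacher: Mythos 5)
Your proposal is correct and follows essentially the same route as the paper: a Neumann expansion in $\lambda^{-1}$ of the closed-form least-squares solution for $\mathbf{u}_{\lambda}$, identification of the zeroth-order term with $\mathbf{u}_{\mathrm{red}}$ and the first-order term with $A^{-1}\mathbf{v}_{\mathrm{red}}$, and then $\mathbf{v}_{\lambda}=\lambda(A\mathbf{u}_{\lambda}-\mathbf{q})$ to get the multiplier estimate. The only cosmetic difference is that the paper first factors out $A^{-1}$ and expands $\bigl(I+\lambda^{-1}A^{-T}P^T\!PA^{-1}\bigr)^{-1}$ under the condition $\lambda>\|PA^{-1}\|_2^2$, whereas you expand $\bigl(A^T\!A+\lambda^{-1}P^T\!P\bigr)^{-1}$ directly with an equivalent smallness condition; your explicit remainder bound is a welcome touch of rigor.
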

\begin{proof}
The state variables used in the penalty approach are given by
\[
\mbf{u}_{\lambda} = \left(A^T\!A + \lambda^{-1}P^T\!P\right)^{-1}\left(A^T\mbf{q} + \lambda^{-1}P^T\!\mbf{d}\right),
\]
and
\[
\mbf{v}_{\lambda} = \lambda(A\mbf{u}_{\lambda} - \mathbf{q}).
\]
The former can be re-written as
\[
\mbf{u}_{\lambda} = A^{-1}\left(I + \lambda^{-1}A^{-T}P^T\!PA^{-1}\right)^{-1}\left(\mbf{q} + \lambda^{-1}A^{-T}P^T\mbf{d}\right).
\]
For $\lambda>\|PA^{-1}\|_2^2$ we may expand the inverse as $(I + \lambda^{-1}B)^{-1} \approx I - \lambda^{-1}B + \lambda^{-2}B^2 + \ldots$,
and find that
\bq
\mbf{u}_{\lambda} &=& A^{-1}\mbf{q}\nonumber\\
&&+ \lambda^{-1}A^{-1}\!A^{-T}P^T\!\left(\mbf{d} - PA^{-1}\mbf{q}\right)\nonumber\\
&&- \lambda^{-2}A^{-1}\left(PA^{-1}\right)^{T}\!\!\left(PA^{-1}\right)A^{-T}P^T\!\left(\mbf{d} - PA^{-1}\mbf{q}\right) + \mcl{O}(\lambda^{-3})\nonumber\\
&=& \mbf{u}_{\mathrm{red}} + \lambda^{-1}A^{-1}\left(I - \lambda^{-1}\left(PA^{-1}\right)^{T}\!\!\left(PA^{-1}\right)\right)\mbf{v}_{\mathrm{red}}  + \mcl{O}(\lambda^{-3}).
\eq
We immediately find
\bq
\mbf{v}_{\lambda} = \mbf{v}_{\mathrm{red}} - \lambda^{-1}\left(PA^{-1}\right)^{T}\!\!\left(PA^{-1}\right)\mbf{v}_{\mathrm{red}} + \mcl{O}(\lambda^{-2}).
\eq
\qed
\end{proof}

\begin{remark}
\label{remark}
Lemma \ref{lemma} suggests a natural scaling for the penalty parameter; $\lambda > \|PA^{-1}\|_2^2$ can be considered large, while $\lambda < \|PA^{-1}\|_2^2$ can be considered small. 
\end{remark}

\begin{theorem}
\label{theorem1}
At each iteration of algorithm \ref{alg:penalty}, the iterates satisfy $\|\mcl{L}_{\mbf{u}}(\mbf{m}^k,\mbf{u}^k_{\lambda},\mbf{v}^k_{\lambda})\|_2 = 0$
and
$\|\mcl{L}_{\mbf{v}}(\mbf{m}^k,\mbf{u}^k_{\lambda},\mbf{v}^k_{\lambda})\|_2 = \mcl{O}(\lambda^{-1})$.
Moreover, if algorithm \ref{alg:penalty} terminates successfully
at $\mbf{m}^*$ for which $\|\mcl{L}_{\mbf{m}}(\mbf{m}^*,\mbf{u}^*_{\lambda},\mbf{v}^*_{\lambda})\|_2 \leq \epsilon$,
we have $\|\nabla\mcl{L}(\mbf{m}^*,\mbf{u}^*_{\lambda},\mbf{v}^*_{\lambda})\|_2\leq \epsilon + \mcl{O}(\lambda^{-1}).$
\end{theorem}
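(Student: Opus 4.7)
The plan is to read off the three components of $\nabla\mathcal{L}$ one at a time, using the two defining relations of the penalty iterates, namely that $\mathbf{u}^k_\lambda$ satisfies $\mathcal{P}_{\mathbf{u}}(\mathbf{m}^k,\mathbf{u}^k_\lambda)=0$ and that $\mathbf{v}^k_\lambda$ is defined by $\mathbf{v}^k_\lambda = \lambda(A(\mathbf{m}^k)\mathbf{u}^k_\lambda - \mathbf{q})$. Everything beyond this is bookkeeping plus one appeal to Lemma \ref{lemma}.

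First I would handle $\mathcal{L}_{\mathbf{u}}$. Substituting the definition of $\mathbf{v}^k_\lambda$ gives
\[
\mathcal{L}_{\mathbf{u}}(\mathbf{m}^k,\mathbf{u}^k_\lambda,\mathbf{v}^k_\lambda) = A(\mathbf{m}^k)^T\mathbf{v}^k_\lambda + P^T(P\mathbf{u}^k_\lambda - \mathbf{d}) = \lambda A(\mathbf{m}^k)^T(A(\mathbf{m}^k)\mathbf{u}^k_\lambda - \mathbf{q}) + P^T(P\mathbf{u}^k_\lambda - \mathbf{d}),
\]
which is exactly $\mathcal{P}_{\mathbf{u}}(\mathbf{m}^k,\mathbf{u}^k_\lambda)$; this vanishes by construction since $\mathbf{u}^k_\lambda$ is the closed-form minimizer in \eqref{eq:ul}. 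This gives the first claim with equality, not just an estimate.

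Next I would handle $\mathcal{L}_{\mathbf{v}}$. Directly from the definition of $\mathbf{v}^k_\lambda$ we have $A(\mathbf{m}^k)\mathbf{u}^k_\lambda - \mathbf{q} = \lambda^{-1}\mathbf{v}^k_\lambda$, so $\mathcal{L}_{\mathbf{v}}(\mathbf{m}^k,\mathbf{u}^k_\lambda,\mathbf{v}^k_\lambda) = \lambda^{-1}\mathbf{v}^k_\lambda$. By Lemma \ref{lemma}, $\mathbf{v}^k_\lambda = \mathbf{v}_{\mathrm{red}} + \mathcal{O}(\lambda^{-1})$, and $\mathbf{v}_{\mathrm{red}}$ depends only on $\mathbf{m}^k$ (independent of $\lambda$), so its norm is bounded. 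Hence $\|\mathcal{L}_{\mathbf{v}}\|_2 = \mathcal{O}(\lambda^{-1})$, as required.

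Finally, assuming successful termination at $\mathbf{m}^*$, the hypothesis gives $\|\mathcal{L}_{\mathbf{m}}\|_2 \leq \epsilon$, and the two statements above applied at the final iterate give $\mathcal{L}_{\mathbf{u}}=0$ and $\|\mathcal{L}_{\mathbf{v}}\|_2 = \mathcal{O}(\lambda^{-1})$. The triangle inequality then yields
\[
\|\nabla\mathcal{L}(\mathbf{m}^*,\mathbf{u}^*_\lambda,\mathbf{v}^*_\lambda)\|_2 \leq \|\mathcal{L}_{\mathbf{m}}\|_2 + \|\mathcal{L}_{\mathbf{u}}\|_2 + \|\mathcal{L}_{\mathbf{v}}\|_2 \leq \epsilon + \mathcal{O}(\lambda^{-1}).
\]
There is no substantial obstacle here; the only point that needs care is making explicit that the hidden constant in $\mathcal{O}(\lambda^{-1})$ depends on $\|PA(\mathbf{m}^k)^{-1}\|_2$ and on $\|\mathbf{v}_{\mathrm{red}}\|_2$ (via Lemma \ref{lemma}), which is legitimate as long as $\lambda$ exceeds the threshold $\|PA^{-1}\|_2^2$ noted in Remark \ref{remark} and the iterates stay in a bounded region.
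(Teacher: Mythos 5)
Your proposal is correct and follows essentially the same route as the paper's proof: the optimality condition $\mathcal{P}_{\mathbf{u}}(\mathbf{m}^k,\mathbf{u}^k_{\lambda})=0$ gives $\mathcal{L}_{\mathbf{u}}=0$ exactly, Lemma \ref{lemma} gives $\mathcal{L}_{\mathbf{v}}=\lambda^{-1}\mathbf{v}_{\mathrm{red}}+\mathcal{O}(\lambda^{-2})$, and the three blocks are combined at termination (the paper sums squares and takes a square root where you use the triangle inequality, an immaterial difference). Your closing remark about the hidden constant depending on $\|PA^{-1}\|_2$ and $\|\mathbf{v}_{\mathrm{red}}\|_2$, with $\lambda>\|PA^{-1}\|_2^2$ as in Remark \ref{remark}, is a welcome clarification that the paper leaves implicit.
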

\begin{proof}
Using the definitions of $\mbf{u}_{\lambda}$ and $\mbf{v}_{\lambda}$ we find
for any $\mbf{m}$
\bq
\mcl{L}_{\mathbf{u}}(\mbf{m},\mbf{u}_{\lambda},\mbf{v}_{\lambda}) &=& A(\mbf{m})^T\mbf{v}_{\lambda} + P(P\mbf{u}_{\lambda} - \mbf{d})\nonumber\\
&=& \lambda A^T(A\mbf{u}_{\lambda} - \mbf{q}) + P(P\mbf{u}_{\lambda} - \mbf{d}) = 0.
\eq
Using the approximations for $\mbf{u}_{\lambda}$ and $\mbf{v}_{\lambda}$ for $\lambda>\|PA^{-1}\|_2^2$ presented in Lemma \ref{lemma}, we find
\bq
\mcl{L}_{\mathbf{v}}(\mbf{m},\mbf{u}_{\lambda},\mbf{v}_{\lambda}) &=& A(\mbf{m})\mbf{u}_{\lambda} - \mbf{q}\nonumber\\
&=& \lambda^{-1}\mbf{v}_{\mathrm{red}} + \mcl{O}(\lambda^{-2}).
\eq
Thus we find
\bq
\|\mcl{L}_{\mathbf{v}}(\mbf{m}^*,\mbf{u}^*_{\lambda},\mbf{v}^*_{\lambda})\|_2 \leq \mcl{O}(\lambda^{-1}).
\eq
At a point $\mbf{m}^*$ for which $\|\mcl{L}_{\mathbf{m}}(\mbf{m}^*,\mbf{u}^*_{\lambda},\mbf{v}^*_{\lambda})\|_2 \leq \epsilon$
we immediately find that
\bq
\|\nabla\mcl{L}(\mbf{m}^*,\mbf{u}^*_{\lambda},\mbf{v}^*_{\lambda})\|_2^2 = \nonumber\\
\|\mcl{L}_{\mathbf{m}}(\mbf{m}^*,\mbf{u}^*_{\lambda},\mbf{v}^*_{\lambda})\|_2^2 +
\|\mcl{L}_{\mathbf{u}}(\mbf{m}^*,\mbf{u}^*_{\lambda},\mbf{v}^*_{\lambda})\|_2^2 +
\|\mcl{L}_{\mathbf{v}}(\mbf{m}^*,\mbf{u}^*_{\lambda},\mbf{v}^*_{\lambda})\|_2^2 \nonumber\\
\leq \epsilon^2 + \mcl{O}(\lambda^{-2}),
\eq
and hence that 
\bq
\|\nabla\mcl{L}(\mbf{m}^*,\mbf{u}^*_{\lambda},\mbf{v}^*_{\lambda})\|_2 \leq \epsilon + \mcl{O}(\lambda^{-1}).
\eq
\qed
\end{proof}

This means we can use algorithm \ref{alg:penalty} to find a stationary point of the Lagrangian within finite accuracy (in terms of $\|\nabla\mcl{L}\|_2$) with a finite $\lambda$. In order to reach a given tolerance, we need $\lambda^{-1}\|\mcl{L}_{\mbf{v}}\|_2$ to be small compared to $\|\mcl{L}_{\mbf{m}}\|_2$. This condition can easily be verified for a given $\mathbf{m}$ and thus serve as the basis for a selection criterion for $\lambda$.

In an ill-posed inverse problem, we can only hope to reconstruct a few components of $\mathbf{m}^*$, roughly corresponding the dominant eigenvectors of the reduced Hessian. Thus, the tolerance $\epsilon$ can be quite large, even for an acceptable reconstruction. Driving down the norm of the gradient any further would only refine the reconstruction in the eigenmodes corresponding to increasingly small eigenvalues and would hardly affect the final reconstruction and datafit. 

Next, we derive an expression for the distance of the final iterate of algorithm \ref{alg:penalty}, $\mbf{m}^*_{\lambda}$, to a stationary point of the Lagrangian, $\mbf{m}^*$ in terms of the chosen tolerance, $\epsilon$, and the data-error $\eta = \|\mathbf{d} - PA(\mbf{m}^*_{\lambda})^{-1}\mbf{q}\|_2$. 

\begin{theorem}
\label{theorem2}
Given a stationary point of the Lagrangian $(\mbf{m}^*, \mbf{u}^*, \mbf{v}^*)$ and the final iterate of algorithm \ref{alg:penalty} $\mbf{m}^*_{\lambda}$ such that $\|\mcl{L}_{\mathbf{m}}(\mbf{m}^*_{\lambda},\mbf{u}^*_{\lambda},\mbf{v}^*_{\lambda})\|_2 \leq \epsilon$, we have $\|\mbf{m}^*_{\lambda} - \mbf{m}^*\|_2 \leq \kappa(H)\left( \widetilde{\epsilon} + \widetilde{\lambda}^{-1}\widetilde{\eta}\right)$ where $H = J^T\!J$ is the reduced Gauss-Newton Hessian with condition number $\kappa(H)$, $\widetilde{\epsilon} = \epsilon/\|H\|_2$ is the scaled tolerance, $\widetilde{\eta} = \|\mathbf{d} - PA(\mbf{m}^*_{\lambda})^{-1}\mbf{q}\|_2/\|J\|_2$ is the scaled data-error and $\widetilde{\lambda} = \lambda / \|PA^{-1}\|_2^2$ is the scaled penalty parameter.
\end{theorem}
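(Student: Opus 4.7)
The plan is to reduce the statement to a perturbation analysis of the reduced objective $\phi(\mbf{m})$ around $\mbf{m}^*$. By Algorithm \ref{alg:reduced}, any stationary point of the Lagrangian with eliminated states satisfies $\nabla\phi(\mbf{m}^*) = G(\mbf{m}^*,\mbf{u}_{\mathrm{red}})^T\mbf{v}_{\mathrm{red}} = 0$, so I will control $\|\mbf{m}^*_{\lambda} - \mbf{m}^*\|_2$ by controlling $\|\nabla\phi(\mbf{m}^*_{\lambda})\|_2$ and then linearizing $\nabla\phi$ about $\mbf{m}^*$.

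First, I would show that at the penalty iterate
\bq
\mcl{L}_{\mbf{m}}(\mbf{m}^*_{\lambda},\mbf{u}^*_{\lambda},\mbf{v}^*_{\lambda}) = G(\mbf{m}^*_{\lambda},\mbf{u}^*_{\lambda})^T\mbf{v}^*_{\lambda} = \nabla\phi(\mbf{m}^*_{\lambda}) + \mbf{r}_{\lambda},
\eq
where $\mbf{r}_{\lambda}$ collects the $\mcl{O}(\lambda^{-1})$ terms coming from Lemma \ref{lemma}. Concretely, substituting the expansions $\mbf{u}^*_{\lambda} = \mbf{u}_{\mathrm{red}} + \mcl{O}(\lambda^{-1})$ and $\mbf{v}^*_{\lambda} = \mbf{v}_{\mathrm{red}} - \lambda^{-1}(PA^{-1})^T(PA^{-1})\mbf{v}_{\mathrm{red}} + \mcl{O}(\lambda^{-2})$ and using that $J = -PA^{-1}G$ (so that $G^T(PA^{-1})^T = -J^T$), the leading correction is $-\lambda^{-1}J^T(PA^{-1})\mbf{v}_{\mathrm{red}}$. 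Since $\mbf{v}_{\mathrm{red}} = A^{-T}P^T(\mbf{d}-P\mbf{u}_{\mathrm{red}})$, I bound $\|\mbf{v}_{\mathrm{red}}\|_2 \leq \|PA^{-1}\|_2\,\eta$, which gives
\bq
\|\mbf{r}_{\lambda}\|_2 \leq \lambda^{-1}\|J\|_2\|PA^{-1}\|_2^2\,\eta + \mcl{O}(\lambda^{-2}) = \widetilde{\lambda}^{-1}\|J\|_2\,\eta.
\eq
Combined with the termination condition $\|\mcl{L}_{\mbf{m}}(\mbf{m}^*_{\lambda},\mbf{u}^*_{\lambda},\mbf{v}^*_{\lambda})\|_2 \leq \epsilon$, this yields the bound $\|\nabla\phi(\mbf{m}^*_{\lambda})\|_2 \leq \epsilon + \widetilde{\lambda}^{-1}\|J\|_2\,\eta$.

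Next, I would linearize. Using $\nabla\phi(\mbf{m}^*) = 0$ and the mean-value form $\nabla\phi(\mbf{m}^*_{\lambda}) = \nabla^2\phi(\xi)(\mbf{m}^*_{\lambda}-\mbf{m}^*)$, and replacing $\nabla^2\phi$ by its dominant Gauss--Newton part $H = J^T\!J$ (the higher-order terms $R$, $K$ in the reduced Hessian are lower order near a good fit), I invert to obtain
\bq
\mbf{m}^*_{\lambda} - \mbf{m}^* \;\approx\; H^{-1}\nabla\phi(\mbf{m}^*_{\lambda}).
\eq
Taking norms, using $\|H^{-1}\|_2 = \kappa(H)/\|H\|_2$ and $\|H\|_2 = \|J\|_2^2$, the two contributions from $\|\nabla\phi(\mbf{m}^*_{\lambda})\|_2$ become $\kappa(H)\,\epsilon/\|H\|_2 = \kappa(H)\widetilde{\epsilon}$ and $\kappa(H)\,\widetilde{\lambda}^{-1}\|J\|_2\eta/\|H\|_2 = \kappa(H)\widetilde{\lambda}^{-1}\widetilde{\eta}$, yielding the claimed bound.

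The main obstacle is step three: the linearization of $\nabla\phi$ around $\mbf{m}^*$ is only justified when $\mbf{m}^*_{\lambda}$ lies in a neighborhood of $\mbf{m}^*$ where the Gauss--Newton Hessian is a valid surrogate for $\nabla^2\phi$. In the non-convex setting of inverse problems this is a local statement, and the higher-order remainders (both in the $\lambda^{-1}$ expansion and in the Taylor expansion) are implicitly absorbed in the approximate inequality. A fully rigorous version would require an explicit assumption that $\|\mbf{m}^*_{\lambda}-\mbf{m}^*\|_2$ is small enough that $\nabla^2\phi$ is invertible with $\|\nabla^2\phi^{-1}\|_2 \lesssim \|H^{-1}\|_2$ along the segment joining the two points; modulo this locality assumption the chain of inequalities above delivers the theorem.
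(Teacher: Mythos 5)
Your proposal is correct at the same level of rigor as the paper, but it gets there by a different route. The paper linearizes the full Lagrangian gradient in $(\mbf{m},\mbf{u},\mbf{v})$-space about the stationary point, uses Theorem \ref{theorem1} to supply the right-hand-side residuals ($\mcl{L}_{\mbf{u}}=0$, $\mcl{L}_{\mbf{v}}=\lambda^{-1}\mbf{v}+\mcl{O}(\lambda^{-2})$), and then block-eliminates the KKT system to obtain $H(\mbf{m}^*_{\lambda}-\mbf{m}^*) = \mcl{L}_{\mbf{m}} + \lambda^{-1}F\mbf{v}^* + \mcl{O}(\lambda^{-2})$ with $H$ the reduced Hessian and $F = G^T\!A^{-T}P^T\!PA^{-1}-K^TA^{-1}$, before discarding the second-order ($R$, $K$) terms. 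You instead stay entirely in the reduced space: you use Lemma \ref{lemma} to write $\mcl{L}_{\mbf{m}}(\mbf{m}^*_{\lambda},\mbf{u}^*_{\lambda},\mbf{v}^*_{\lambda}) = \nabla\phi(\mbf{m}^*_{\lambda}) + \mbf{r}_{\lambda}$ with $\|\mbf{r}_{\lambda}\|_2\leq\widetilde{\lambda}^{-1}\|J\|_2\eta$, invoke $\nabla\phi(\mbf{m}^*)=0$ (which indeed follows from stationarity of the Lagrangian, since the $\mbf{u}$- and $\mbf{v}$-equations force $\mbf{u}^*=\mbf{u}_{\mathrm{red}}$, $\mbf{v}^*=\mbf{v}_{\mathrm{red}}$), and close with a mean-value expansion of $\nabla\phi$ and the Gauss--Newton surrogate $H=J^T\!J$. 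Since block elimination of the KKT Hessian is exactly the reduced Hessian $\nabla^2\phi$, the two arguments coincide at leading order and produce the identical bound $\kappa(H)(\widetilde{\epsilon}+\widetilde{\lambda}^{-1}\widetilde{\eta})$; what your version buys is a more elementary derivation that makes the locality/Taylor assumption explicit, while the paper's full-space expansion exposes where the neglected curvature terms ($R$, $K$) enter and ties the result directly to Theorem \ref{theorem1}. Two small points to tighten: your ``leading correction'' in $\mbf{r}_{\lambda}$ keeps only the $\delta\mbf{v}$ contribution, but the $\delta\mbf{u}=\lambda^{-1}A^{-1}\mbf{v}_{\mathrm{red}}+\mcl{O}(\lambda^{-2})$ perturbation contributes a same-order-in-$\lambda$ term $\lambda^{-1}K(\mbf{m},\mbf{v}_{\mathrm{red}})^TA^{-1}\mbf{v}_{\mathrm{red}}$, which is negligible only because it is quadratic in the residual --- this is precisely the $K^TA^{-1}$ piece of the paper's $F$ that gets dropped, so you should state that you discard it for the same reason; and the sign of that leading correction should be $+\lambda^{-1}J^T(PA^{-1})\mbf{v}_{\mathrm{red}}$ with your convention $J=-PA^{-1}G$, which is immaterial for the norm bound.
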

\begin{proof}
We expand the gradient of the Lagrangian at the stationary point as
\bq
\left(
\begin{array}{c}
\mcl{L}_{\mbf{m}}(\mbf{m}^*_{\lambda}, \mbf{u}^*_{\lambda}, \mbf{v}^*_{\lambda})\\
\mcl{L}_{\mbf{u}}(\mbf{m}^*_{\lambda}, \mbf{u}^*_{\lambda}, \mbf{v}^*_{\lambda})\\
\mcl{L}_{\mbf{v}}(\mbf{m}^*_{\lambda}, \mbf{u}^*_{\lambda}, \mbf{v}^*_{\lambda})\\
\end{array}
\right)
\approx\nonumber\\
\left(
\begin{array}{ccc}
R(\mbf{m}^*,\mbf{u}^*,\mbf{v}^*)&K(\mbf{m}^*,\mbf{v}^*)^T&G(\mbf{m}^*,\mbf{u}^*)^T\\
K(\mbf{m}^*,\mbf{v}^*)&P^T\!P&A(\mbf{m}^*)^T\\
G(\mbf{m}^*,\mbf{u}^*)&A(\mbf{m}^*)&0\\
\end{array}
\right)
\left(
\begin{array}{c}
\mbf{m}^*_{\lambda} - \mbf{m}^*\\
\mbf{u}^*_{\lambda} - \mbf{u}^*\\
\mbf{v}^*_{\lambda} - \mbf{v}^*\\
\end{array}
\right)
\eq
Using the expressions for the gradient of the Lagrangian at $(\mbf{m}^*_{\lambda},\mbf{u}^*_{\lambda},\mbf{v}^*_{\lambda})$ obtained in Theorem \ref{theorem1} we can obtain an expression for $\mbf{m}^*_{\lambda} - \mbf{m}^*$ etc. by solving
\bq
\left(
\begin{array}{ccc}
R&K^T&G^T\\
K&P^T\!P&A^T\\
G&A&0\\
\end{array}
\right)
\left(
\begin{array}{c}
\mbf{m}^*_{\lambda} - \mbf{m}^*\\
\mbf{u}^*_{\lambda} - \mbf{u}^*\\
\mbf{v}^*_{\lambda} - \mbf{v}^*\\
\end{array}
\right)
=
\left(
\begin{array}{c}
\mcl{L}_{\mbf{m}}(\mbf{m}^*_{\lambda}, \mbf{u}^*_{\lambda}, \mbf{v}^*_{\lambda})\\
0\\
\lambda^{-1}\mathbf{v}^* + \mcl{O}(\lambda^{-2})\\
\end{array}
\right)
\eq
Eliminating the bottom two rows we find
\bq
\label{eq:dm}
H(\mbf{m}^*_{\lambda} - \mbf{m}^*) = \mcl{L}_{\mbf{m}}(\mbf{m}^*_{\lambda}, \mbf{u}^*_{\lambda}, \mbf{v}^*_{\lambda}) + \lambda^{-1}F\mbf{v}^* + \mcl{O}(\lambda^{-2}),
\eq
where 
\[
H = R - K^TA^{-1}G-G^TA^{-T}K + G^TA^{-T}P^TPA^{-1}G
\]
is the reduced Hessian and
\[
F = G^T\!A^{-T}\!P^T\!PA^{-1} - K^TA^{-1}.
\]
Expressing $\mbf{v}^*$ as
\[
\mbf{v} = A^{-T}\!P^T\!\left(\mathbf{d} - PA(\mbf{m}^*_{\lambda})^{-1}\mbf{q}\right),
\]
and ignoring the second order terms in $H$ and $F$, i.e, 
\[
H \approx G^T\!A^{-T}\!P^T\!PA^{-1}G,
\]
and
\[
F \approx G^T\!A^{-T}\!P^T\!PA^{-1}
\]
we find
\[
\|\mbf{m}^*_{\lambda} - \mbf{m}^*\|_2 \leq ||H^{-1}||_2\left(\epsilon +  \widetilde{\lambda}^{-1} \|G^T\!A^{-T}\!P^T\|_2\eta\right),
\]
where $\eta = \|\mathbf{d} - PA(\mbf{m}^*_{\lambda})^{-1}\mbf{q}\|_2$ denotes the data-error.

Introducing the scaled tolerance and data-error we express this as
\[
\|\mbf{m}^*_{\lambda} - \mbf{m}^*\|_2 \leq \kappa(H)\left(\widetilde{\epsilon} +  \widetilde{\lambda}^{-1} \widetilde{\eta}\right).
\]
\end{proof}
As noted before, we can only hope to reconstruct a few components of $\mathbf{m}^*$, given by the dominant eigenvectors
of $H$. Thus, even for an acceptable reconstruction, the error $\|\mbf{m}^*_{\lambda} - \mbf{m}^*\|_2$ can be relatively large. On the other hand, the data-error for the corresponding $\mathbf{m}_{\lambda}^*$ can be quite small, even if $\|\mbf{m}^*_{\lambda} - \mbf{m}^*\|_2$ is large. 
We expect the penalty method
to yield an acceptable reconstruction when $\widetilde{\lambda}^{-1} \widetilde{\eta}$ is small compared to $\widetilde{\epsilon}$.

\section{Algorithm}
\label{algorithm}
In this section, we discuss some practicalities of the implementation of algorithm \ref{alg:penalty}. We slightly elaborate the notation to explicitly 
reveal the multi-experiment structure of the problem. In this case, the data are acquired in a series of $K$ independent experiments and $\mbf{d} = [\mbf{d}_1; \ldots; \mbf{d}_{K}]$ is a block-vector. We partition the states and sources in a similar manner. Since the experiments are independent, the system matrix $A$ is block-diagonal matrix with $K$ blocks $A_i(\mbf{m})$ of size $N\times N$. Similarly, the matrix $P$ consists of blocks $P_i$. Recall that we collect $L$ independent measurements for each experiment, so the matrices $P_i\in\mathbb{R}^{N\times L}$ have full rank.

\subsection{Solving the augmented PDE}
\label{solving}
Due to the block structure of the problem, the linear systems can be solved independently.
We can obtain the state $\mathbf{u}_i$ by solving the following inconsistent overdetermined system
\bq
\label{eq:u_pen}
\left(
\begin{array}{c}
A_i(\mbf{m})\\
\lambda^{-1/2}P_i
\end{array}
\right)
\mbf{u}_{i} \approx
\left(
\begin{array}{c}
\mbf{q}_{i}\\
\lambda^{-1/2}\mbf{d}_{i}
\end{array}
\right),
\eq
in a least-squares sense. Assuming that all the blocks $A_i$ and $P_i$ are identical, we will drop the subscript $i$ for the remainder of this subsection. Next, we will discuss various approaches to solving the overdetermined system (\ref{eq:u_pen}).

\textbf{Factorization:} If both $A$ and $P$ are sparse, we can efficiently solve the system via a QR factorization or via a Cholesky factorization of the corresponding Normal equations. In many applications, $P^T\!P$ is a (nearly) diagonal matrix and thus the augmented system $A^T\!A + \lambda^{-1}P^T\!P$ has a similar sparsity pattern as the original system. Thus, the fill-in will not be worse than when factorizing the original system.

\textbf{Iterative methods:} While we can make use of factorization techniques for small-scale applications, industry-scale applications will typically require (preconditioned) iterative methods.  Obviously, we can apply any preconditioned iterative method that is suitable for solving least-squares problems, such as LSQR, LSMR or CGLS \cite{Paige1982,Fong2011,Bru2014}. Another promising candidate is a generic accelerated row-projected method described by \cite{Bjorck1979,Gordon2013} which proved useful for solving PDEs and can be easily extended to deal with overdetermined systems \cite{Censor1983}. 

To get an idea of how such iterative methods will perform we explore some of the properties of the augmented system. The augmented system $A^T\!A + \lambda^{-1} P^T\!P$ is a rank $L$ modification of the original system $A^T\!A$. It follows from \cite[Thm 8.1.8]{Golub1996} that the eigenvalues are related as
\bq
\mu_n(A^T\!A + \lambda^{-1} P^T\!P) = \mu_n(A^T\!A) + a_n \lambda^{-1}, n = 1, 2, \ldots, N
\eq
where $\mu_1(B) > \mu_2(B) > \ldots > \mu_{N}(B)$ denote the eigenvalues of $B$ and the coefficients $a_n$ satisfy $\sum_{n=1}^{N} a_n = L$. This means that at worst, 1 eigenvalue is shifted by $L\lambda^{-1}$ while at best, all the eigenvalues are shifted by $LN^{-1}\lambda^{-1}$. For the condition numbers $\kappa(B) = \mu_1(B)/\mu_N(b)$ we find
\bq
C_N^{-1}\kappa(A^T\!A)\leq \kappa(A^T\!A + \lambda^{-1} P^T\!P) \leq C_1\kappa(A^T\!A),
\eq
where $C_i = \left(1 + \frac{L}{\lambda \mu_i(A^T\!A)}\right)$.

To illustrate this, we show a few examples for a 1D (time-harmonic) parabolic PDE $\left(\imath\omega - \partial_x^2\right)u = 0$ and the 1D Helmholtz equation $\left(\omega^2 + \partial_x^2\right)u = 0$, both with Neumann boundary conditions. Both are discretized using first-order finite-differences on $x \in [0,1]$ with $N=51$ points for $\omega = 10\pi$. The sampling matrix $P$ consists of $L$ rows of the identity matrix (regularly sampled). The ratio of the condition numbers of $A^T\!A$ and $A^T\!A + \lambda^{-1}P^T\!P$ for the parabolic and Helmholtz equation are shown in tables \ref{table:example2} and \ref{table:example3}. For these examples, the condition number of the augmented system is actually lower than that of the original system. The eigenvalues 
are shown in figures \ref{fig:example2} and \ref{fig:example3}. These show that the actual eigenvalues distributions do not change significantly. We expect
that iterative methods will perform similarly on the augmented system as they would on the original system. \emph{How} to effectively precondition the augmented system given a good preconditioner for the original system is a different matter which is outside the scope of this paper.

\textbf{Direct methods:} When the matrix has additional structure we might actually prefer a direct method over an iterative method. An example is explicit time-stepping, where the system matrix $A$ exhibits a lower-triangular block-structure. In this case the action of $A^{-1}$ can be computed efficiently via forward substitution, requiring storage of only a few time-slices of the state. The adjoint system $A^{-T}$ can be solved by backward substitution, however, the full time-history of the state variable is needed to compute the gradient \cite{Rothauge2015}.
For the penalty method, the augmented system $A^T\!A + \lambda^{-1}P^T\!P$ will have a banded structure and the system can be solved using a block version of the Thomas algorithm, which again would require storage of the full time-history. So even in this setting it seems possible to apply the penalty method at roughly the same per-iteration complexity as the reduced method.

\subsection{Gradient and Hessian computation}
Given these solutions $\mbf{u}_k$ of (\ref{eq:u_pen}), the gradient, $\mbf{g}_{\lambda}$ and Gauss-Newton Hessian $H_{\lambda}$ of $\phi_{\lambda}$ are given by (cf eqs. \ref{eq:gradpen}-\ref{eq:hesspen})
\bq
\mbf{g}_{\lambda} &=& \lambda\sum_{k=1}^K G_k^T\!\left(A_k\mathbf{u}_{k} - \mathbf{q}_{k}\right),\\
H_{\lambda} &=& \lambda\sum_{k=1}^K G_k^T\!\left(I - A_k\left(A_k^T\!A_k + \lambda^{-1}P_k^T\!P_k\right)^{-1}A_k^T \right)G_k,
\eq
where $G_k = G(\mbf{m},\mbf{u}_k)$. We can compute the inverse of $\left(A_k^TA_k + \lambda^{-1}P_kP_k^T\right)$ in the same way as used when solving for the states. In practice, we would solve for one state at a time and aggregate the results on the fly. Moreover, the Gauss-Newton Hessian admits a natural sparse approximation $H_{\lambda} \approx \lambda\sum_{k=1}^K G_k^TG_k$ which has proved to work well in practice \cite{esser_automatic_2015}.

\subsection{Choosing $\lambda$}
An important aspect of the proposed method is the choice of the penalty parameter, $\lambda$. Theorem \ref{theorem1} essentially states that we can expect to find a stationary point of the Lagrangian within finite accuracy with finite $\lambda$ as long as $\lambda^{-1}\|\mcl{L}_{\mbf{v}}\|_2$ is small compared to $\|\mcl{L}_{\mbf{m}}\|_2$.
We can easily keep track of this quantity -- at no significant additional computational cost -- during the iterations and increase $\lambda$ as needed. Initialization can be done by directly enforcing $\lambda^{-1}\|\mcl{L}_{\mbf{v}}\|_2$ to be some fraction of $\|\mcl{L}_{\mbf{m}}\|_2$ at the initial iterate. A natural scaling for $\lambda$ is suggested by Lemma \ref{lemma}; $\lambda = \widetilde{\lambda}\|PA^{-1}\|_2^2$, where $\widetilde{\lambda} > 1$ is considered large and $\widetilde{\lambda} < 1$ is considered small.
 
\subsection{Complexity estimates}
A summary of the leading order computational costs \emph{per iteration} of the penalty, reduced and all-at-once approaches is given in table \ref{table:costs}. 
The storage and computation required for the reduced and penalty methods are of the same order in terms of $K,N$ and $M$. The PDE-solves in both the penalty and reduced approaches can be done independently and in parallel. This makes these approaches more attractive for large-scale problems from a computational point-of-view.

We have argued in section \ref{solving} that it is plausible that the augmented system can be solved as efficiently as the original PDE. However,
it is not clear how the penalty and reduced methods will compare in the required number of iterations, though we expect that for small $\lambda$ the optimization problem is less non-linear and hence easier to solve. To reach a given tolerance with the penalty method, however, we need a continuation strategy in $\lambda$, adding to the cost of the penalty method. In the next section we compare the actual computational costs on a few test-cases.

\section{Case studies}
\label{examples}
The following experiments are done in Matlab, using direct factorization to solve the PDEs (with Matlab \texttt{slash}). We consider both a Gauss-Newton (GN) and a Quasi-Newton (QN) variant of the algorithms and use a weak Wolfe linesearch to determine the steplength. In the GN method the Hessian is inverted using conjugate gradients (\texttt{pcg}) up to a relative tolerance of $\delta$. The matrix-vector products are computed on the fly. For the QN method we use the L-BFGS inverse Hessian with a history size of 5 \cite{Nocedal}. We measure the cost of the inversion by counting the number of PDE solves as outlined in table \ref{table:costs}. In all experiments, we set $\lambda$ relative to the largest eigenvalue of $A^{-T}P^T\!PA^{-1}$ at the initial iterate. This scaling is justified by lemma \ref{lemma}. We compare the results for various fixed values of $\lambda$ and additionally show the results for an ad-hoc continuation strategy; performing a few iterations for increasing values of $\lambda$ using the result for each $\lambda$ as initial guess for the next.

To avoid the inverse crime, we compute the data for the ground truth model on a finer grid than used for the inversion. 

In these experiments, we illustrate that the penalty method: 
\begin{itemize}
\item converges to a stationary point of the Lagrangian within the predicted tolerance of $\mcl{O}(\lambda^{-1})$;
\item can give practically the same or better results as the reduced method at a lower computational cost;
\item is not overly sensitive to noise;
\item is less sensitive to the initial guess than the conventional approach.
\end{itemize}
The Matlab code used to perform the experiments is available from \url{https://github.com/tleeuwen/Penalty-Method}.

\subsection{1D DC resistivity}
We consider the PDE
\bq
\partial_t u(t,x) = \partial_x\left(m(x)\partial_x\right) u(t,x),
\eq
on the domain $x \in [0,1]$ with Neumann boundary conditions. 
A finite-difference discretization in the temporal Fourier domain gives
\bq
A(\mbf{m}) = \imath\omega\mathsf{diag}(\mbf{w}) + D^T\mathsf{diag}(\mbf{m})D,
\eq
where $\omega$ is the angular frequency, $\mbf{w} = [\frac{1}{2}; 1, \ldots, 1; \frac{1}{2}]$, $\mbf{m}$ represents 
the medium parameter in the cell-centres and $D$ is the $N-1\times N$ finite-difference matrix
\[
D=\frac{1}{h}\left(\begin{array}{cccccc} 
-1& 1&   &  &      &   \\
  &-1& 1 &  &      &   \\
  &  &\ddots&\ddots&   \\
  &  &      & -1   & 1 \\
\end{array}\right),
\]
with $h=1/(N-1)$. The Jacobian is given by 
\[
G(\mbf{m},\mbf{u}) = D^T\mathsf{diag}(D\mbf{u}).
\]
The ground-truth model is $m(x) = 1+e^{-10(x-1/2)^2}$ and we locate two sources and receivers on either end of the domain. The data are generated on a grid with $N=201$ points and we have $K=L=2$.

For the inversion we use $N=101$ points.
We use a GN method with $\epsilon=10^{-9}$, $\delta=10^{-3}$ and include a regularization term $\frac{\alpha}{2} \|D\mbf{m}\|_2^2$ with $\alpha = 10^{-6}$.
The initial parameters are $\mbf{m}^0 = \mbf{1}$.

The results are shown in figure \ref{fig:1D_exp1}. The convergence plot, figure \ref{fig:1D_exp1} (a), shows the predicted behaviour of the penalty method; the norm
of the gradient of the Laplacian stalls at $\mcl{O}(\lambda^{-1})$. The convergence of the continuation strategy shows that it is possible to reach the desired tolerance by gradually increasing $\lambda$ (using $\lambda = \{0.1, 1, 10, 100\}$ with a few iterations each). The resulting parameter estimates are very similar as can be seen 
in figure \ref{fig:1D_exp1} (b). The actual costs of the inversion are listed in table \ref{table:1D_exp1}. The computational cost for the various approaches are of the same order of magnitude, except for $\lambda=10$, where more than twice as many iterations are required. 

\subsection{2D Acoustic tomography}
Consider the 2D scalar wave-equation
\bq
m(x)\partial_t^2u(t,x) = \nabla^2u(t,x),
\eq
on $x \in \Omega \subseteq \mathbb{R}^2$  with radiation boundary conditions $\sqrt{m}(x)\partial_tu(t,x) - n(x)\cdot\nabla u(t,x) = 0$ on $\partial\Omega$
where $n(x)$ is the outward normal vector. 

Discretization in the temporal Fourier domain leads to a scalar Helmholtz equation
\bq
A(\mbf{m}) = \mathsf{diag}(\mbf{s}) - D^T\!D,
\eq
where $D = [I_2\otimes D_1; D_2\otimes I_1]$ with $D_i$ the $(N_i-1)\times N_i$ finite-difference matrix, $I_i$ the $N_i\times N_i$ identity matrix
and $s_i = \omega^2 m_i$ in the interior and $s_i = \omega^2 m_i/2 + \imath\omega\sqrt{m_i}/h$ on the boundary.
The Jacobian is given by
\bq
G(\mbf{m},\mbf{u}) = \mathsf{diag}(\mbf{s}')\mathsf{diag}(\mbf{u}),
\eq
where $s'_i = \omega^2$ in the interior and $s'_i = (\omega^2 + \imath\omega/\sqrt{m_i})/2$ on the boundary.

The observation matrix $P$ samples the solution at the receiver locations using 2D linear interpolation while the point sources are defined using adjoint 2D linear interpolation.

\subsubsection{Ultrasound tomography}

The domain $\Omega = [0,1]\times [0,1]$ m is discretized using $N_1\times N_2$ points. 
The ground-truth $\mbf{m}^*$ as well as the source and receiver locations are shown in figure  \ref{fig:2D_model}. We use a single frequency of $5$ kHz (i.e., $\omega = 10^4\pi$). The data for the ground-truth model are generated using $N_1=N_2=101$ while the following experiments are done with $N_1=N_2=51$.

\textbf{Non-linearity:}
First, we investigate the sensitivity of the misfit functions $\phi$ and $\phi_{\lambda}$ by plotting $\phi(\mbf{m}^* + a_1 \mbf{v}_1 + a_2\mbf{v}_2)$ and $\phi_{\lambda}(\mbf{m}^* + a_1 \mbf{v}_1 + a_2\mbf{v}_2)$ as a function of $(a_1,a_2)$. 
We take $\mbf{v}_1, \mbf{v}_2$ to be slowly oscillatory modes as shown in figure \ref{fig:2D_exp0a}. The misfit as a function of $(a_1,a_2)$ is shown in figure \ref{fig:2D_exp0b}. We see a radically different behaviour for the reduced and penalty methods.

The first exhibits strong non-linearity and some spurious stationary points while for small $\lambda$ the penalty misfit is much better behaved. For larger values $\lambda$ the penalty misfit starts to behave more like the reduced misfit as expected.

\textbf{Inversion:}
For the inversion, we include a regularization term $\frac{\alpha}{2} \|D\mbf{m}\|_2^2$ with $\alpha = 2$ and compare  the GN method ($\epsilon=10^{-6}$, $\delta=10^{-1}$) to the QN method ($\epsilon=10^{-6}$). The initial parameter $\mathbf{m}_0$ is constant at $\frac{1}{4}$ $s^2/m^2$.

The results for the GN method are shown in figure \ref{fig:2D_exp1}.
The convergence history, figure \ref{fig:2D_exp1} (top, left), shows the predicted behaviour of the penalty method; the norm of the gradient of the Lagrangian stalls at $\mcl{O}(\lambda^{-1})$ when using the penalty method. The convergence history of the continuation strategy shows that is possible to reach the desired tolerance by gradually increasing $\lambda$ (using $\lambda = \{0.1, 1, 10, 100, 1000\}$ with a few iterations each). Figure \ref{fig:2D_exp1} (top, right) shows that the methods perform similarly in terms of reconstruction error. The resulting parameter estimates are very similar as can be seen 
in figure \ref{fig:2D_exp1} (bottom). The actual costs of the inversion are listed in table \ref{table:2D_exp1}. The penalty method converges to the same error $\|\mathbf{m}^k - \mathbf{m}^*\|$ in less iterations and uses less PDE solves. Note that all methods start overfitting after a few iterations. This can be countered by including more appropriate regularization or stopping the iterations early. The point here is to show that the penalty method gives similar results as the reduced method.

The results for the QN method are shown in figure \ref{fig:2D_exp2}. The convergence history shows the same behaviour as the previous experiment. The costs of the inversion, shown in table \ref{table:2D_exp2}, are slightly less than those of the GN method. As with the GN method, the penalty method converges in less iterations and uses less PDE solves per iterations. 

\textbf{Sensitivity to noise:}
Results for the QN method on data with 10\% Gaussian noise are shown in figure  \ref{fig:2D_exp3}. Figure \ref{fig:2D_exp4} shows the results on data with 20\% Gaussian noise. These results show that the penalty approach is not overly sensitive to noise and gives very similar --even slightly better-- results compared to the reduced approach. 

\subsubsection{Seismic tomography}
Here, the domain $\Omega = [0,5]\times [0,20]$ km is discretized using $N_1\times N_2$ points. The ground-truth $\mbf{m}^*$ as well as the source and receiver locations are shown in figure \ref{fig:overthrust_model}. We use a frequency of $2$ Hz (i.e., $\omega = 4\pi$). The data for the ground-truth are generated using $N_1=101, N_2=401$ while the following experiments are done with $N_1=51, N_2=201$. 

\textbf{Sensitivity to the initial guess}
For the inversion, we include a regularization term $\frac{\alpha}{2} \|D\mbf{m}\|_2^2$ with $\alpha = 5$ and use the QN method ($\epsilon=10^{-6}$). We will use two different initial guesses, I and II, depicted in figure \ref{fig:overthrust_model}, and see whether the methods converge to the same final iterate. Initial iterate I is much closer to the ground truth than the initial iterate II. This can also be observed when looking at the data produced by these iterates. The first initial iterate produces data that differs only slightly from the observed data and inversion is considered to be easy. The second initial iterate produces data that is shifted significantly with respect to the observed data and inversion is considered to be difficult. It should be noted, however, that a significant source of the error in the initial model is the region near the surface ($z=0$). In practical applications, such large errors in this region of the model might not occur.

The results for initial guess I (figure \ref{fig:overthrust_model}, middle) are shown in figure \ref{fig:2D_overthrust1}. We see that both the reduced and penalty methods converge to roughly the same final iterate and are able to fit the data equally well. Starting from initial guess II (figure \ref{fig:overthrust_model}, bottom), however, we see that the reduced and penalty methods converge to different final iterates. For small $\lambda$, however, the penalty method converges to roughly the same iterate as when starting from a better initial guess. Looking at the data-fit, we observe that the penalty method for small $\lambda$ is still able to fit the data perfectly while the reduced method is not. In seismology, this phenomenon is called \emph{cycle-skipping}.

Figure \ref{fig:2D_overthrust3} shows the convergence of the methods in terms of the data misfit $\|P\mathbf{u} - \mathbf{d}\|_2$ and the distance to the constraint $\|A(\mathbf{m})\mathbf{u} - \mathbf{q}\|_2$. We observe that, when starting from initial guess I, both the penalty and reduced methods converge to approximately the same point. For initial guess II the penalty method for $\lambda=0.1$ and $\lambda=1$ needs a few more iterations, but still converges to the same point as for initial guess I. For $\lambda=10$ and the reduced method, however, the iterations stall at a relatively high data misfit.

These experiments suggests that the penalty method indeed mitigates some of the non-linearity of the problem, allowing the optimization to converge to the same final iterate, even when the initial guess is further away from the ground truth. 
\section{Discussion}
\label{discussion}
This paper lays out the basics of an efficient implementation of the penalty method for PDE-constrained optimization problems arising in inverse problems. While the initial results are promising, some aspects of the proposed method warrant further investigation.

Even though the theoretical results suggest that the penalty approach can find a stationary point of the Lagrangian with finite precision with a finite $\lambda$, it is not clear how to choose a suitable value for $\lambda$ a priori. Our analysis and results suggest that choosing $\lambda$ to be a small fraction of $\|PA^{-1}\|_2^2$ at the initial iterate yields good results. A continuation strategy for $\lambda$ is needed if we want to guarantee finding a stationary point of the Lagrangian with preset tolerance. A natural way to do this seems to be detecting when the penalty method stalls and subsequently reducing $\lambda$. The numerical results suggest that such an approach is viable, but further study is needed in order to develop a robust continuation strategy.

The penalty formulation essentially relaxes the constraints and therefore allows for errors in the physics as well as the data. As a result, the penalty formulation leads to reduced sensitivity of the final reconstruction to the initial guess. Further investigation is needed to characterize this robustness.  

Finally, the Hessian of the penalty objective exhibits additional structure that could potentially be exploited. In particular, the penalty-method GN Hessian is full rank and allows for a natural sparse approximation $H_{\lambda} \approx \lambda G^T\!G$ (cf. equation \ref{eq:Hl}). The reduced GN Hessian, on the other hand, has rank of at most $ML$ and does not permit such a natural sparse approximation.

\section{Conclusions}
\label{conclusion}
We have presented a penalty method for PDE-constrained optimization with linear PDEs with applications to inverse problems. The method is based on a quadratic penalty formulation of the constrained problem. This reformulation results in a an unconstrained optimization problem in both the parameters and the state variables. To avoid having to store and update the state variables as part of the optimization, we explicitly eliminate
the state variables by solving an overdetermined linear system. The proposed method combines features from both the \emph{all-at-once} approach, in which the states and parameters are updated simultaneously, and the conventional \emph{reduced} approach, in which the PDE-constraints are eliminated explicitly. While having a similar computational complexity as the conventional reduced approach, the penalty approach explores a larger search space by not satisfying the PDE-constraints exactly. 

We show that we can (theoretically) find a stationary point of the Lagrangian of the constrained problem within a given tolerance as long as the penalty parameter, $\lambda$, is chosen large enough. While theoretically we need $\lambda \uparrow \infty$, we can suffice with solving the problem for a finite $\lambda$ to reach the stationary point within finite precision. 

The main algorithmic difference with the conventional reduced approach is the way the states are eliminated from the problem. Instead of solving the PDEs, we formulate an overdetermined system of equations that consists of the discretized PDE and the measurements. We discuss the properties of this augmented system and show with a few numerical examples that both the structure of the system as well as the eigenvalues are not altered dramatically as compared the original PDE. Thus, it is plausible that the augmented system can be solved as efficiently using the same approach as is used for the original PDE.

The numerical examples show that very good results can be obtained by using even a single, relatively small, value of $\lambda$. An ad-hoc continuation strategy further shows that it is viable to gradually increase $\lambda$ in order to reach the desired tolerance.

The numerical examples further show that when using the penalty formulation, the optimization problem may actually be less non-linear and that in some cases a better parameter reconstruction is obtained as compared to the conventional reduced approach. In particular, the results show that the penalty method is not overly sensitive to noise and less sensitive to the initial model than the conventional reduced approach. 

Thus, the proposed approach is a viable alternative to the conventional reduced approach for solving inverse problems with PDE-constraints.

\section*{Acknowledgments}
This work was in part financially supported by the Natural Sciences and Engineering Research Council of Canada via the Collaborative Research and Development Grant DNOISEII (375142--08). This research was carried out as part of the SINBAD II project which is supported by the following organizations: BG Group, BGP, CGG, Chevron, ConocoPhillips, DownUnder GeoSolutions, Hess, Petrobras, PGS, Schlumberger, Sub Salt Solutions and Woodside.

\clearpage

\begin{table}
\centering
\begin{tabular}{c|c|c|c}
 	& small 2D 	& large 2D 	& industrial 3D 	\\
\hline
$K$	&   $10^2$	& $10^3$	&    $10^6$     	\\
$L$	&   $10^2$	& $10^3$	&	$10^6$			\\
$M$	&   $10^6$	& $10^9$	&	$10^{12}$		\\
$N$	&   $10^3$  & $10^6$	&	$10^9$			\\
\end{tabular}
\caption{Typical size of seismic inverse problem in terms $K$: of the number of experiments, $L$: the number of measurements per experiment, $N$: the number of discretization points and $M$: the number of parameters.}
\label{table:sizes}
\end{table}

\begin{table}
\centering
\begin{tabular}{cccc}
& $\lambda = 0.1$ & $\lambda = 1$ & $\lambda = 10$ \\
\hline
L = 1 & 9.83e-01 & 9.84e-01 & 9.89e-01 \\
L = 10 & 9.68e-02 & 5.07e-01 & 9.11e-01 \\
L = 20 & 9.19e-02 & 5.01e-01 & 9.09e-01 \\
\hline
\end{tabular}

\caption{Ratio of the condition numbers of $A^T\!A + \lambda P_L^T\!P_L$ and $A^T\!A$ for various $\lambda$ and $L$, where $A$ is a finite-difference discretization of $\imath(10\pi) - \partial_x^2$ on $x\in [0,1]$ and $P_L$ is a restricted identity matrix of rank $L$. }
\label{table:example2}
\end{table}

\begin{table}
\centering
\begin{tabular}{cccc}
& $\lambda = 0.1$ & $\lambda = 1$ & $\lambda = 10$ \\
\hline
L = 1 & 1.80e-01 & 5.44e-01 & 9.17e-01 \\
L = 10 & 1.03e-01 & 5.06e-01 & 9.10e-01 \\
L = 20 & 9.10e-02 & 5.00e-01 & 9.09e-01 \\
\hline
\end{tabular}

\caption{Ratio of the condition numbers of $A^T\!A + \lambda P_L^T\!P_L$ and $A^T\!A$ for various $\lambda$ and $L$, where $A$ is a finite-difference discretization of $(10\pi)^2 m + \partial_x^2$ on $x\in [0,1]$ and $P_L$ is a restricted identity matrix of rank $L$. }
\label{table:example3}
\end{table}

\begin{table}
\begin{tabular}{c|c|c|p{5cm}}
         & \# PDE's & Storage & Gauss-Newton update \\
\hline
penalty  &  $K$  &    $N + M$     & solve matrix-free linear system in $M$ unknowns, requires $K$ (overdetermined) PDE solves per mat-vec \\
\hline
reduced  &  $2K $  &    $2N + M$     & solve matrix-free linear system in $M$ unknowns, requires $2K$ PDE solves per mat-vec                   \\
\hline
all-at-once&   0   &    $2KN + M$   &  solve sparse symmetric, possibly indefinite system in $(2KN + M) \times (2KN + M)$ unknowns \\ 
\end{tabular}
\caption{Leading order computational and storage costs per iteration of different methods; $K$ denotes the number of experiments and $N$ denotes the number of gridpoints and $M$ denotes the number of parameters.}
\label{table:costs}
\end{table}

\begin{table}
\centering
\begin{tabular}{cccccc}
& reduced & $\lambda = 0.1$ & $\lambda = 1$ & $\lambda = 10$ & increasing $\lambda$ \\
\hline
iterations & 7 & 5 & 6 & 7 & 6 \\
PDE solves & 496 & 222 & 223 & 280 & 292 \\
\hline
\end{tabular}

\caption{Costs of the 1D DC resistivity inversion.}
\label{table:1D_exp1}
\end{table}

\begin{table}
\centering
\begin{tabular}{cccccc}
& reduced & $\lambda = 0.1$ & $\lambda = 1$ & $\lambda = 10$ & increasing $\lambda$ \\
\hline
iterations & 6 & 4 & 5 & 6 & 7 \\
PDE solves & 172 & 38 & 56 & 82 & 99 \\
\hline
\end{tabular}

\caption{Costs of the 2D ultrasound inversion with a GN method.}
\label{table:2D_exp1}
\end{table}

\begin{table}
\centering
\begin{tabular}{ccccc}
& reduced & $\lambda = 0.1$ & $\lambda = 1$ & $\lambda = 10$ \\
\hline
iterations & 36 & 18 & 29 & 34 \\
PDE solves & 76 & 21 & 31 & 35 \\
\hline
\end{tabular}

\caption{Costs of the 2D ultrasound inversion with a QN method.}
\label{table:2D_exp2}
\end{table}
\clearpage

\begin{figure}
\centering
\begin{tabular}{ccc}
\includegraphics[scale=.2]{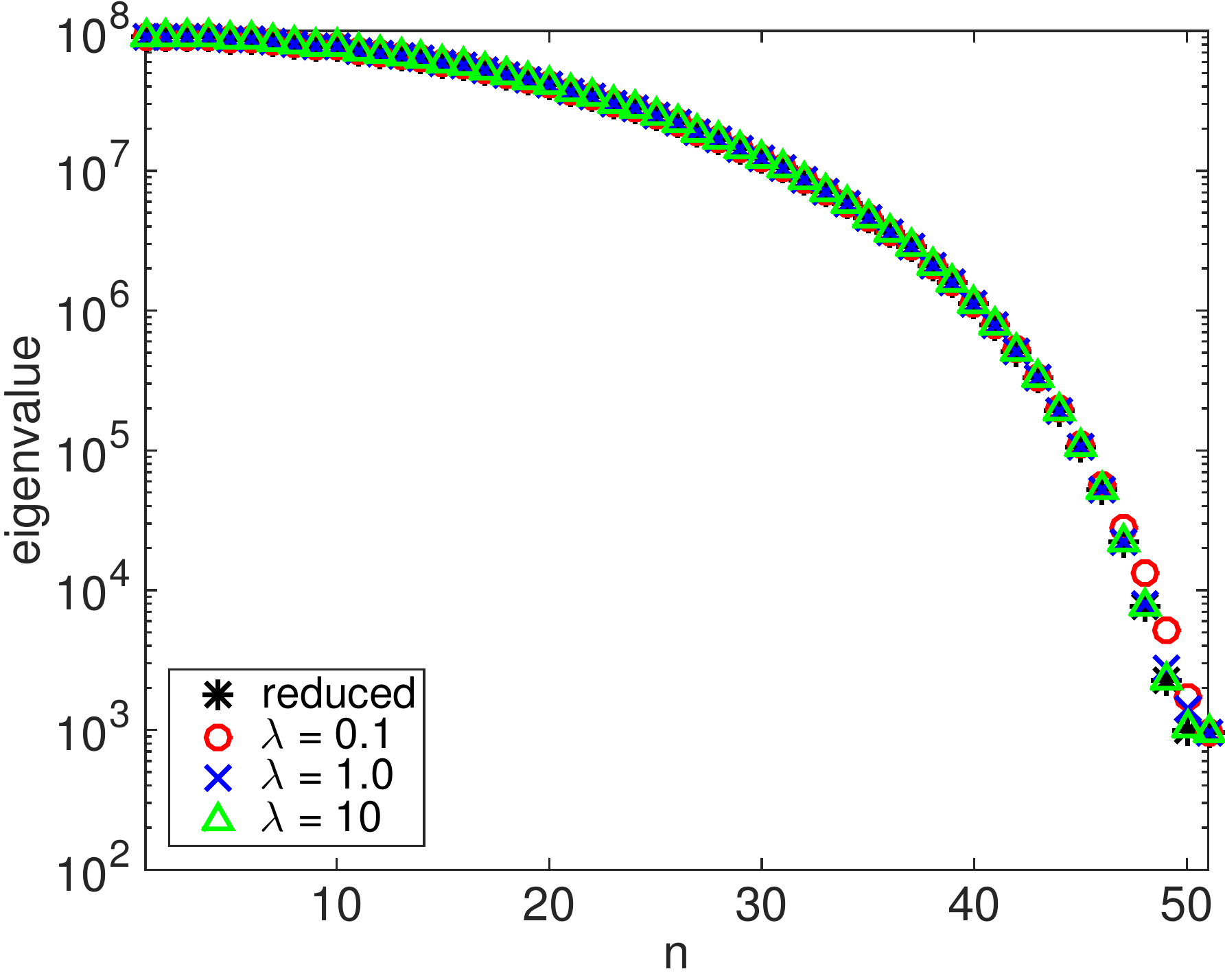}&
\includegraphics[scale=.2]{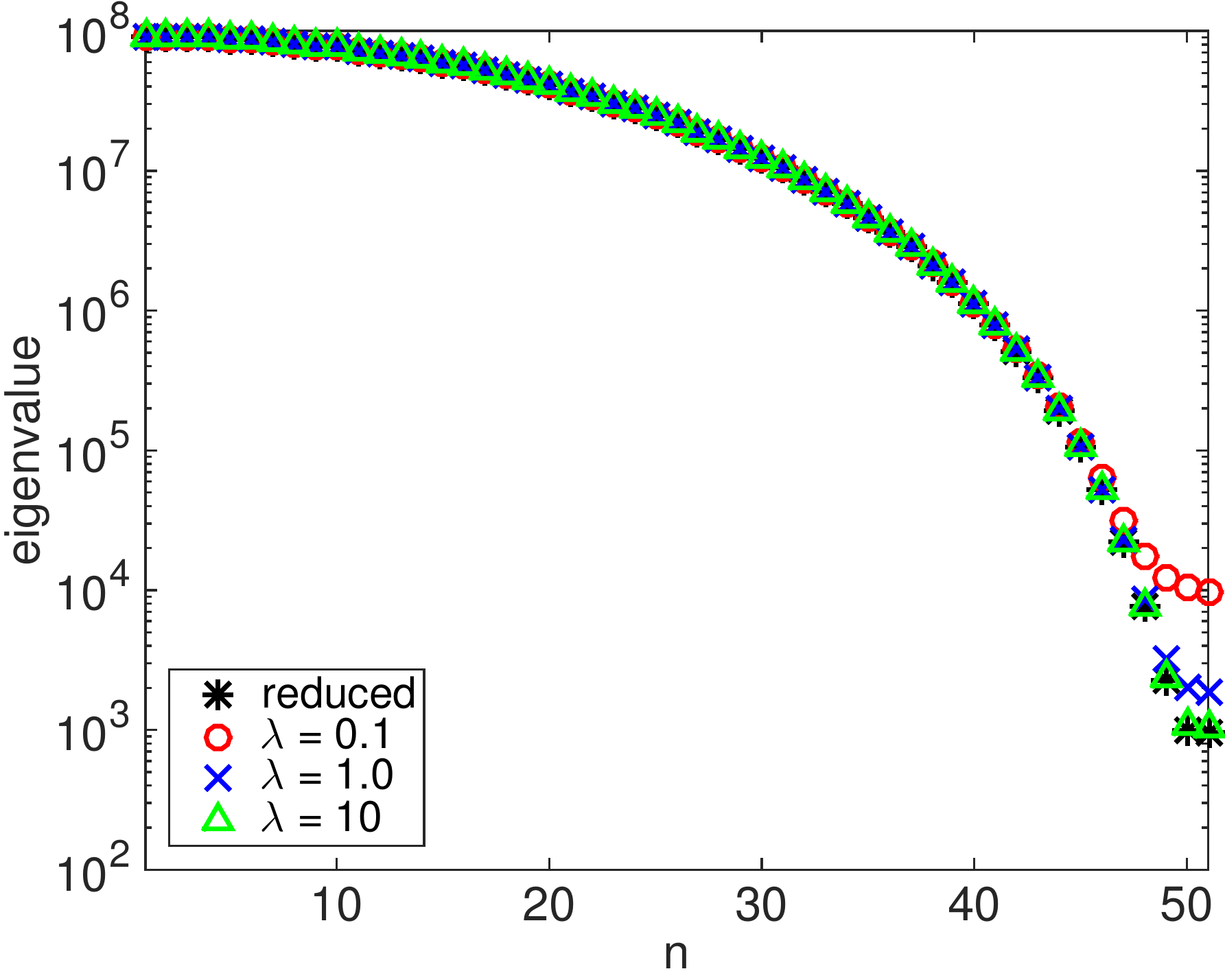}&
\includegraphics[scale=.2]{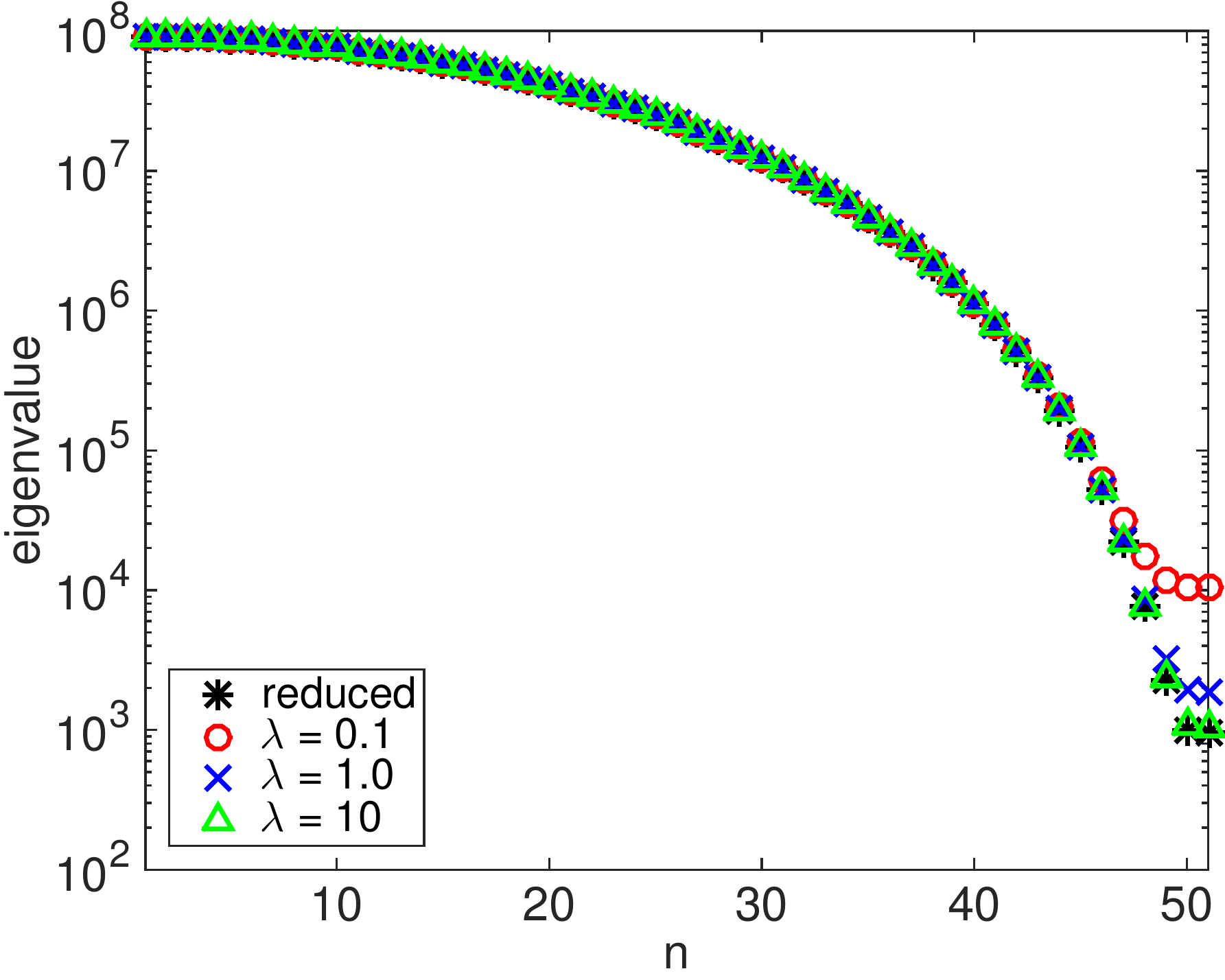}\\
{\small $L = 1$}&{\small $L = 10$}&{\small $L = 20$}\\
\end{tabular}
\caption{Eigenvalues of the augmented system,  $A^T\!A + \lambda P_L^T\!P_L$, for various $\lambda$ and $L$, where $A$ is a finite-difference discretization of $\imath(10\pi) - \partial_x^2$ on $x\in [0,1]$
and $P_L$ is a restricted identity matrix of rank $L$. For comparison, the eigenvalues of the original system $A^T\!A$ are also shown.}
\label{fig:example2}
\end{figure}

\begin{figure}
\centering
\begin{tabular}{ccc}
\includegraphics[scale=.2]{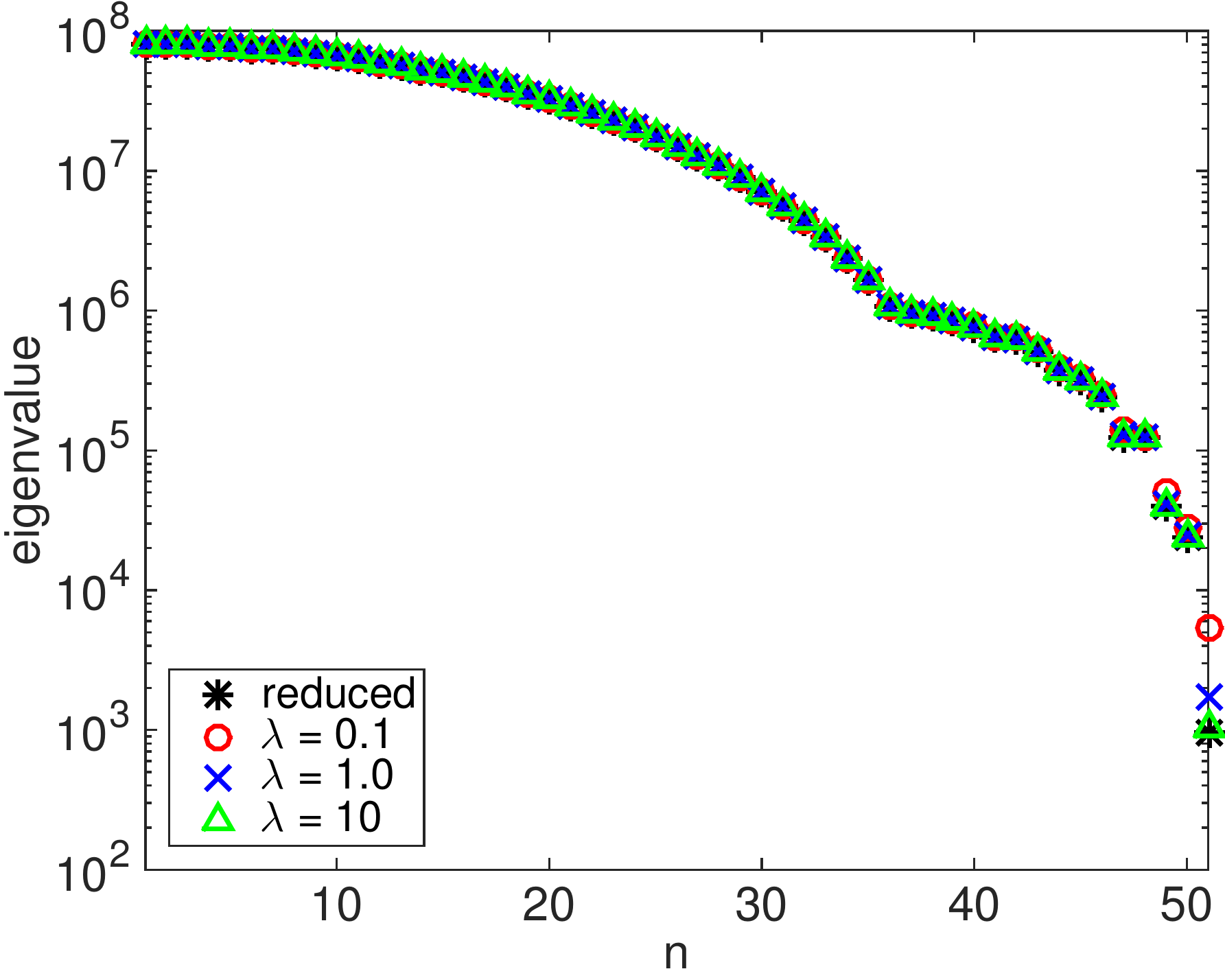}&
\includegraphics[scale=.2]{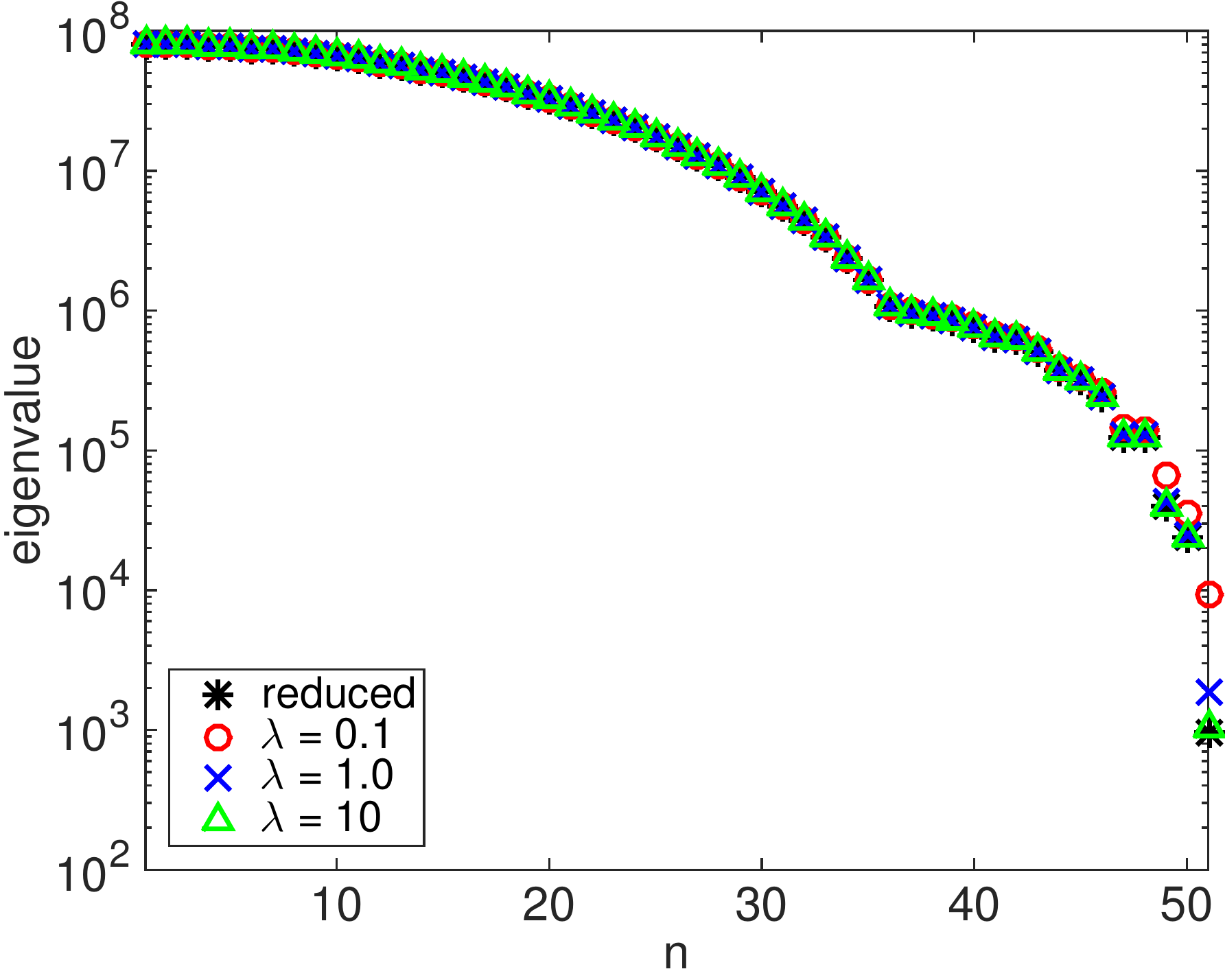}&
\includegraphics[scale=.2]{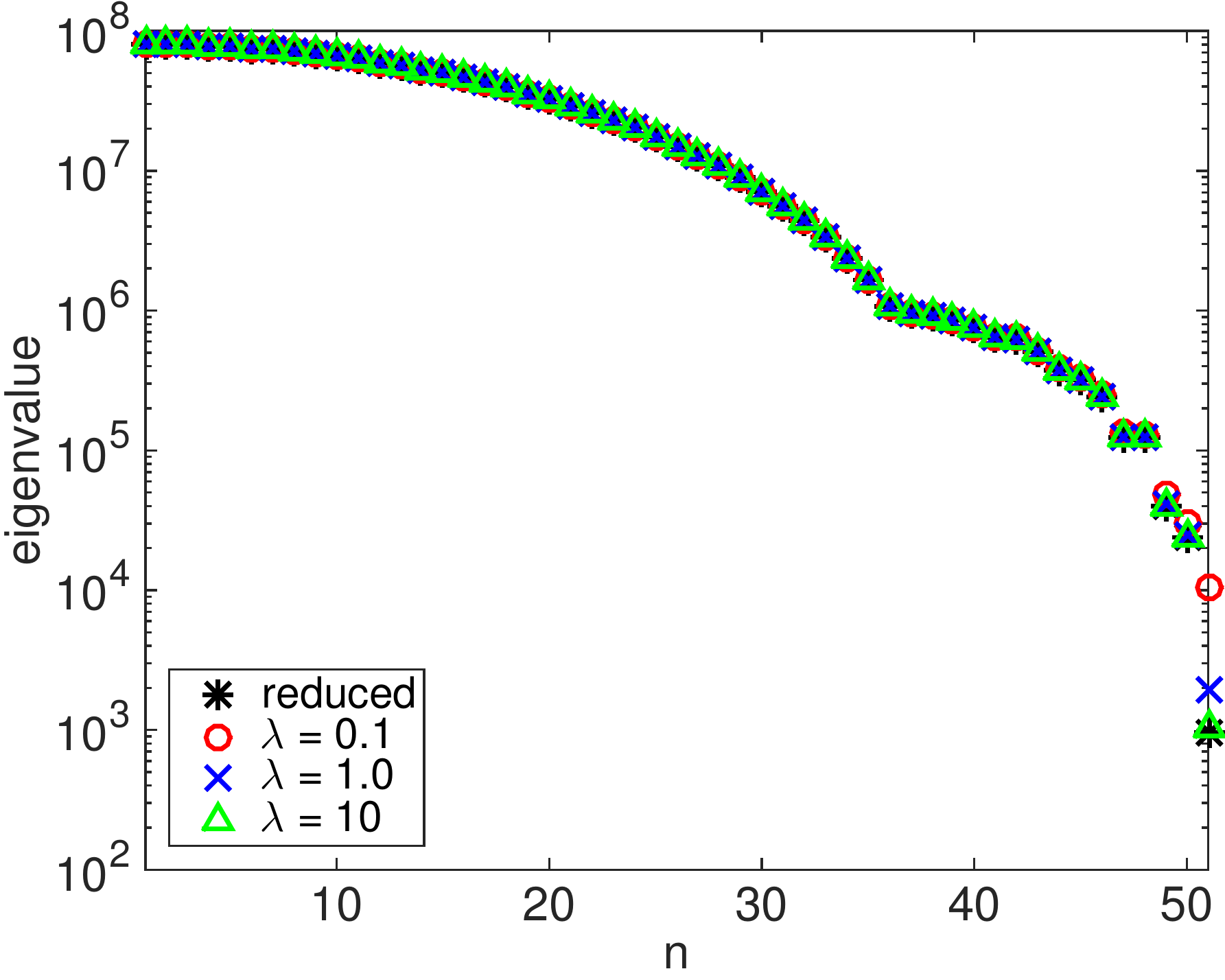}\\
{\small $L = 1$}&{\small $L = 10$}&{\small $L = 20$}\\
\end{tabular}
\caption{Eigenvalues of the augmented system,  $A^T\!A + \lambda P_L^T\!P_L$, for various $\lambda$ and $L$, where $A$ is a finite-difference discretization of $(10\pi)^2 m + \partial_x^2$ on $x\in [0,1]$
and $P_L$ is a restricted identity matrix of rank $L$. For comparison, the eigenvalues of the original system $A^T\!A$ are also shown.}
\label{fig:example3}
\end{figure}

\begin{figure}
\centering
\begin{tabular}{cc}
\includegraphics[scale=.4]{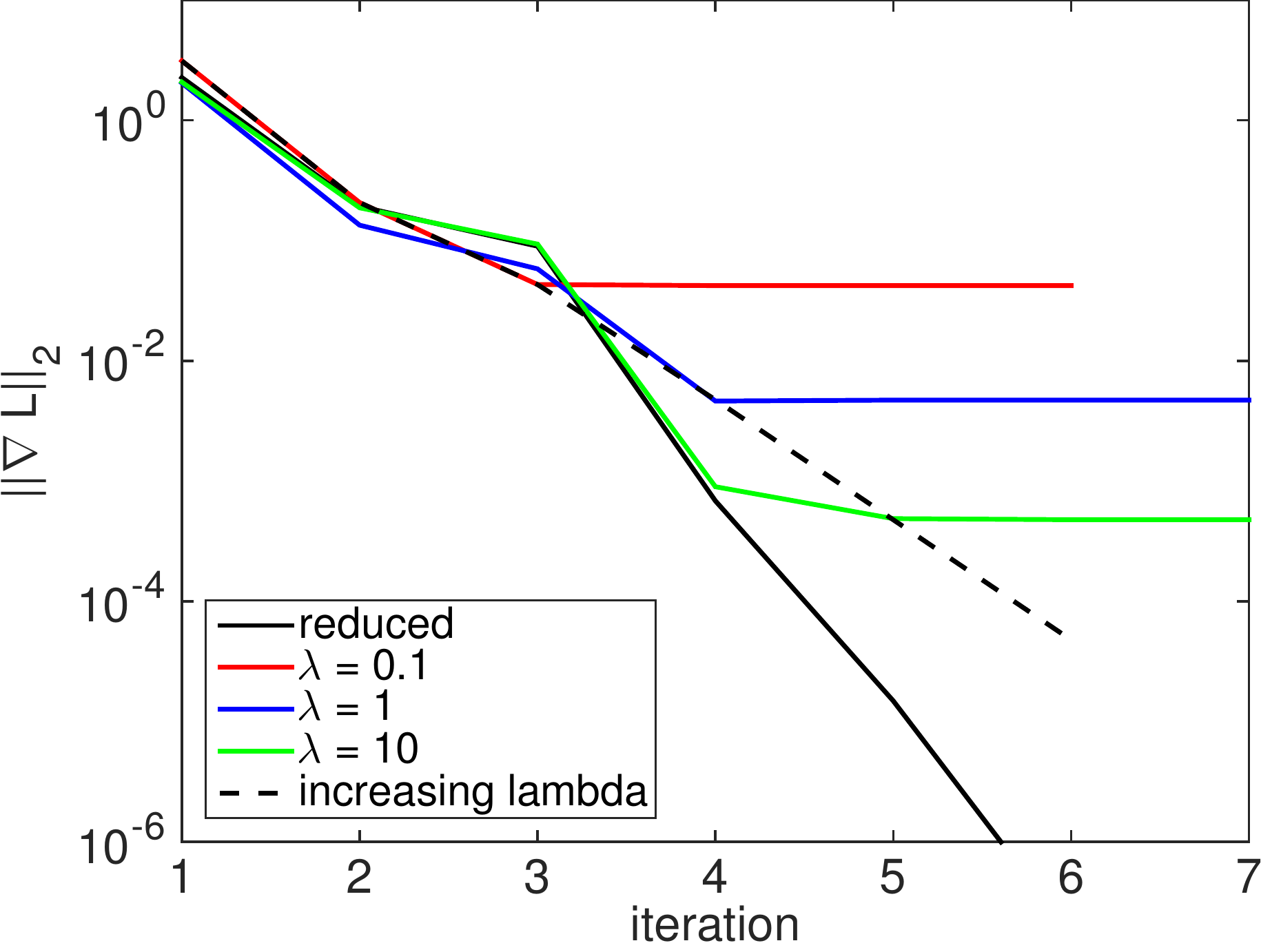}&
\includegraphics[scale=.4]{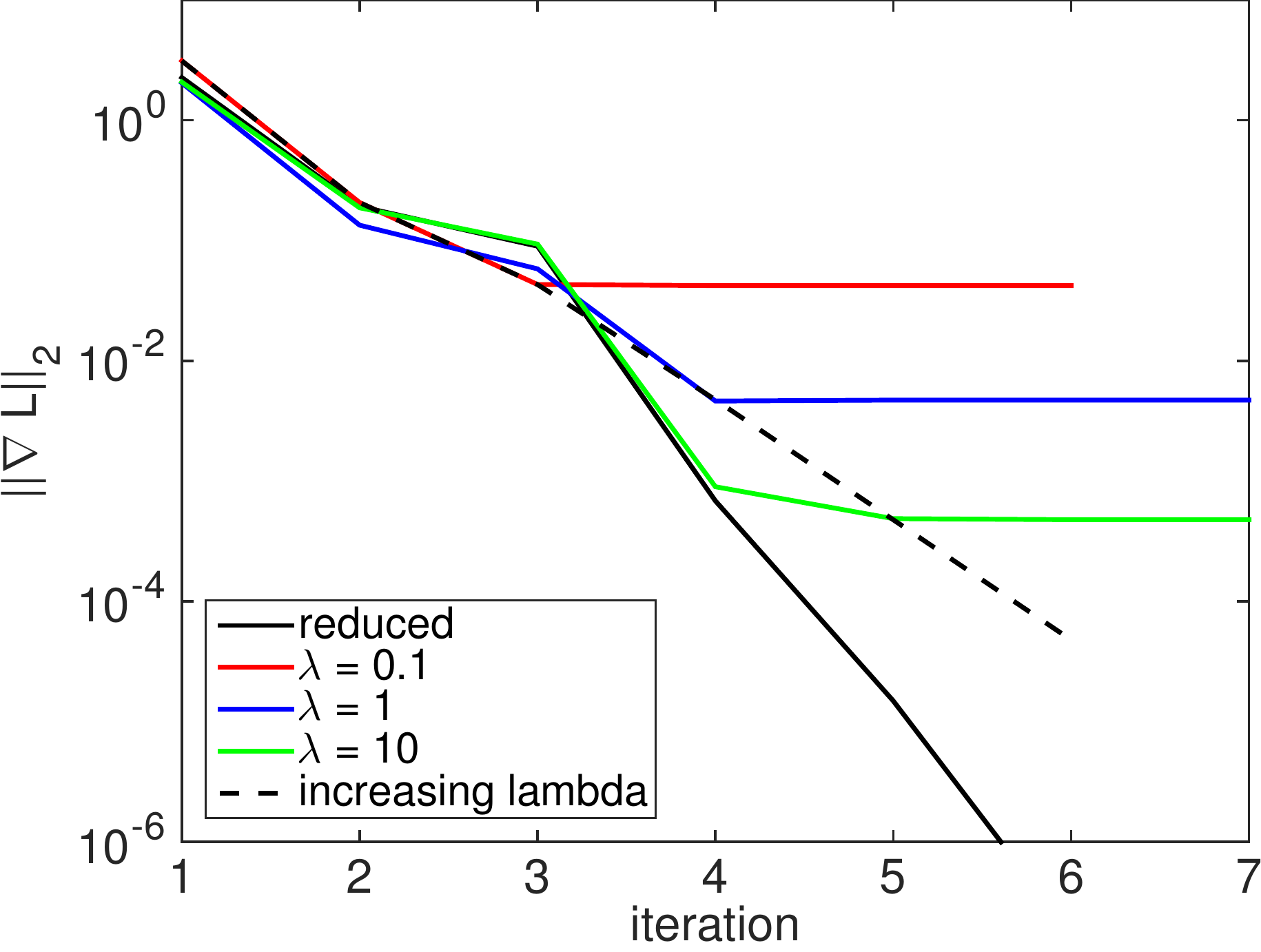}\\
{\small (a)}&{\small (b)}\\
\end{tabular}
\caption{Convergence history and reconstructions for 1D resistivity problem. Even though the penalty method does not converge to same tolerance as the reduced method in terms of the gradient of the Lagrangian, the resulting parameter estimates are almost the same.}
\label{fig:1D_exp1}
\end{figure}

\begin{figure}
\centering
\includegraphics[scale=.4]{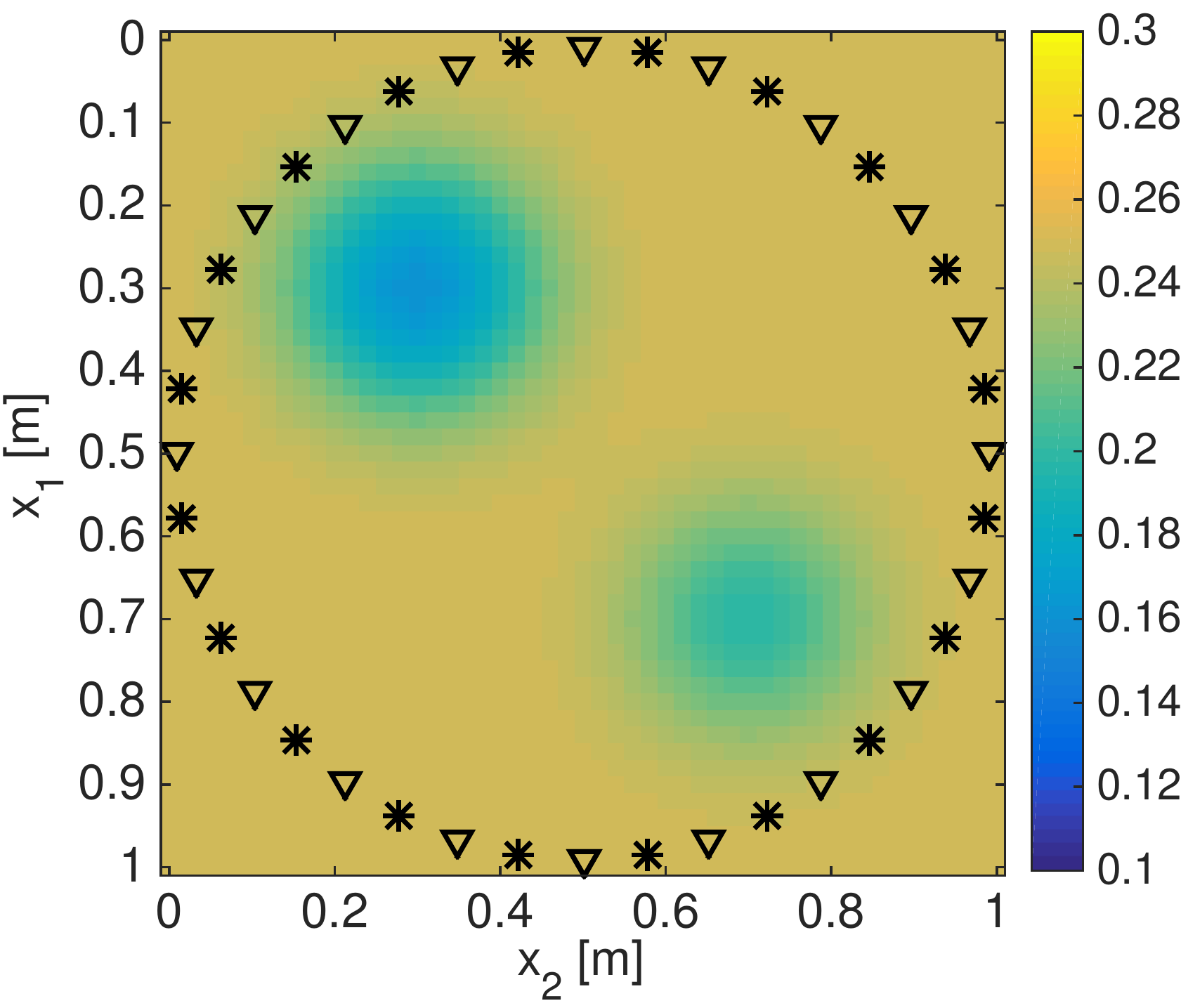}\\
\caption{Ground truth model ($s^2/km^2$) and locations of the sources ($*$) and receivers ($\bigtriangledown$)}
\label{fig:2D_model}
\end{figure}

\begin{figure}
\centering
\begin{tabular}{cc}
\includegraphics[scale=.2]{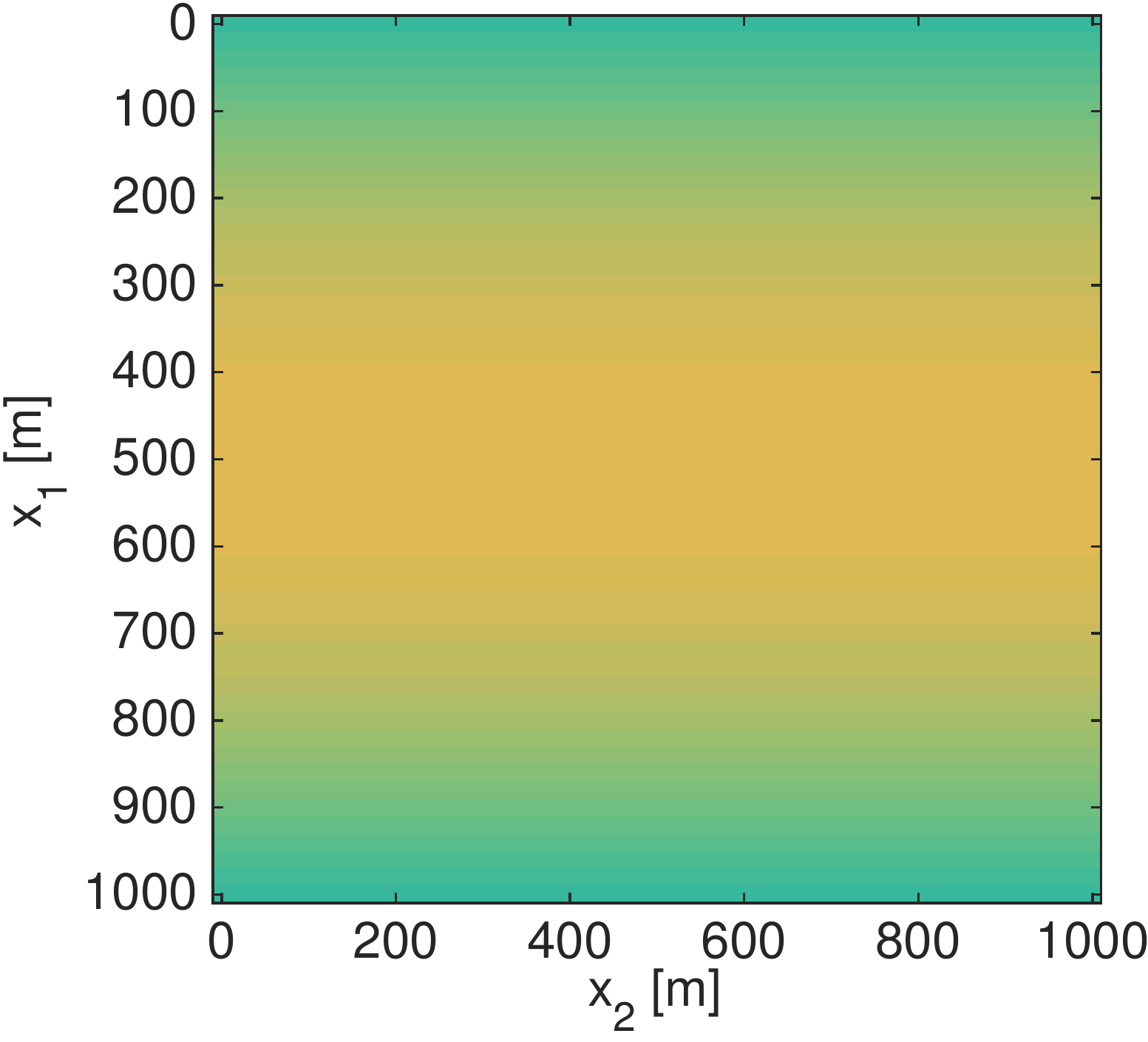}&
\includegraphics[scale=.2]{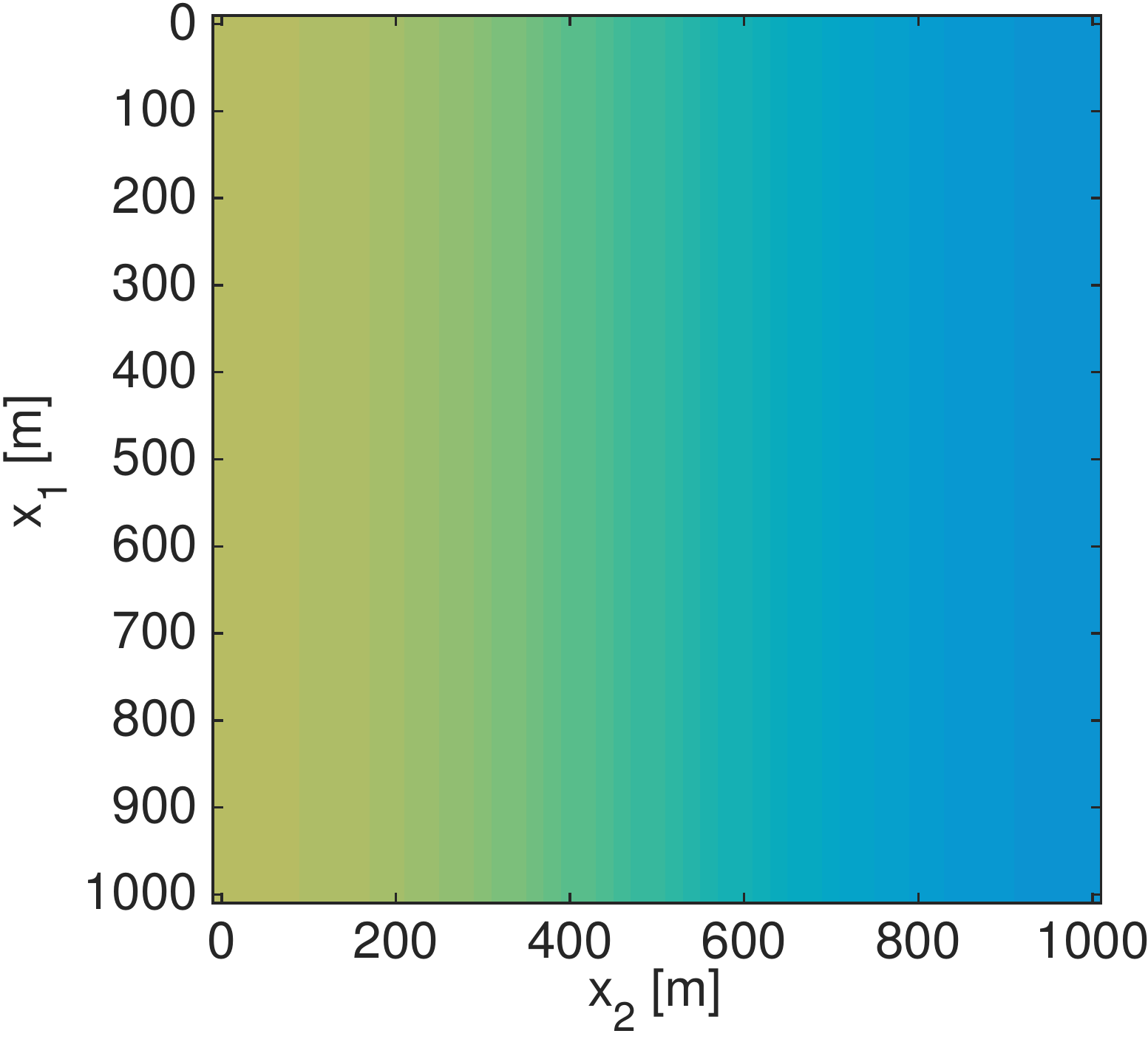}\\
\end{tabular}
\caption{Perturbations $\mathbf{v}_1$ and $\mathbf{v}_2$ used to plot the misfit $\phi(\mathbf{m}^* + a1\mathbf{v}_1 + a_2\mathbf{v}_2)$ and $\phi_{\lambda}(\mathbf{m}^* + a_1\mathbf{v}_1 + a_2\mathbf{v}_2)$ in figure \ref{fig:2D_exp0b}.}
\label{fig:2D_exp0a}
\end{figure}

\begin{figure}
\centering
\begin{tabular}{cccc}
\includegraphics[scale=.2]{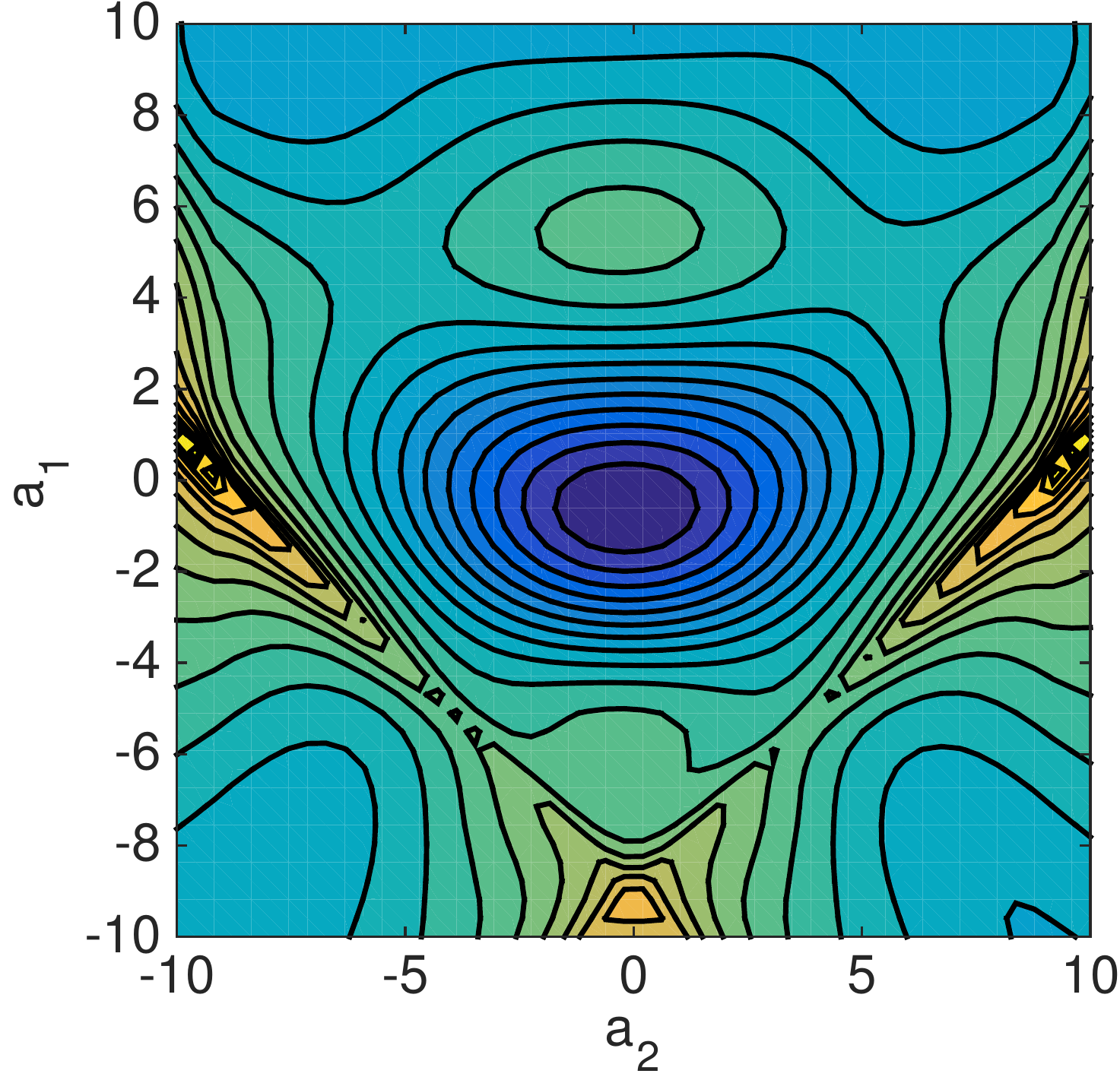}&
\includegraphics[scale=.2]{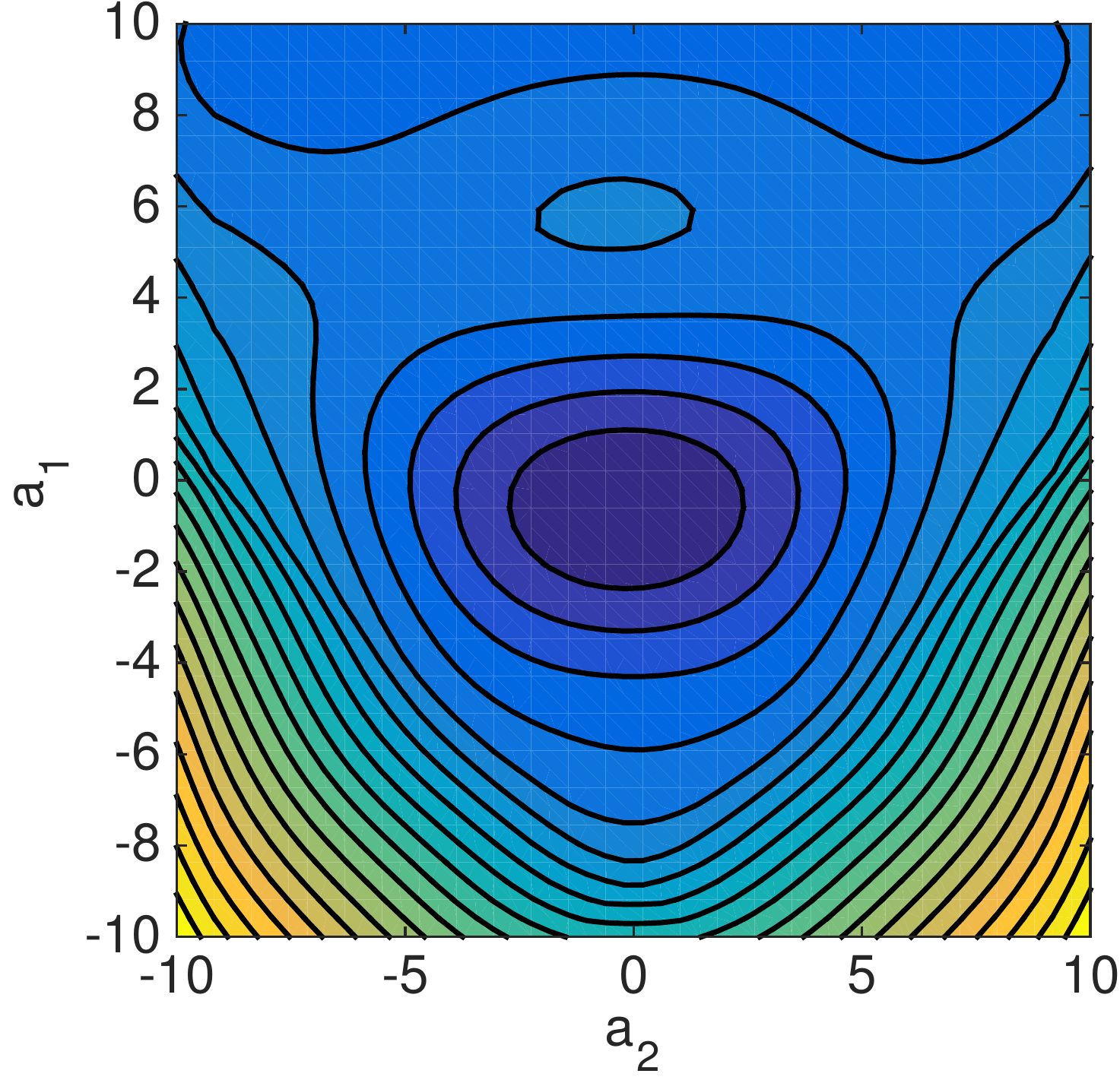}&
\includegraphics[scale=.2]{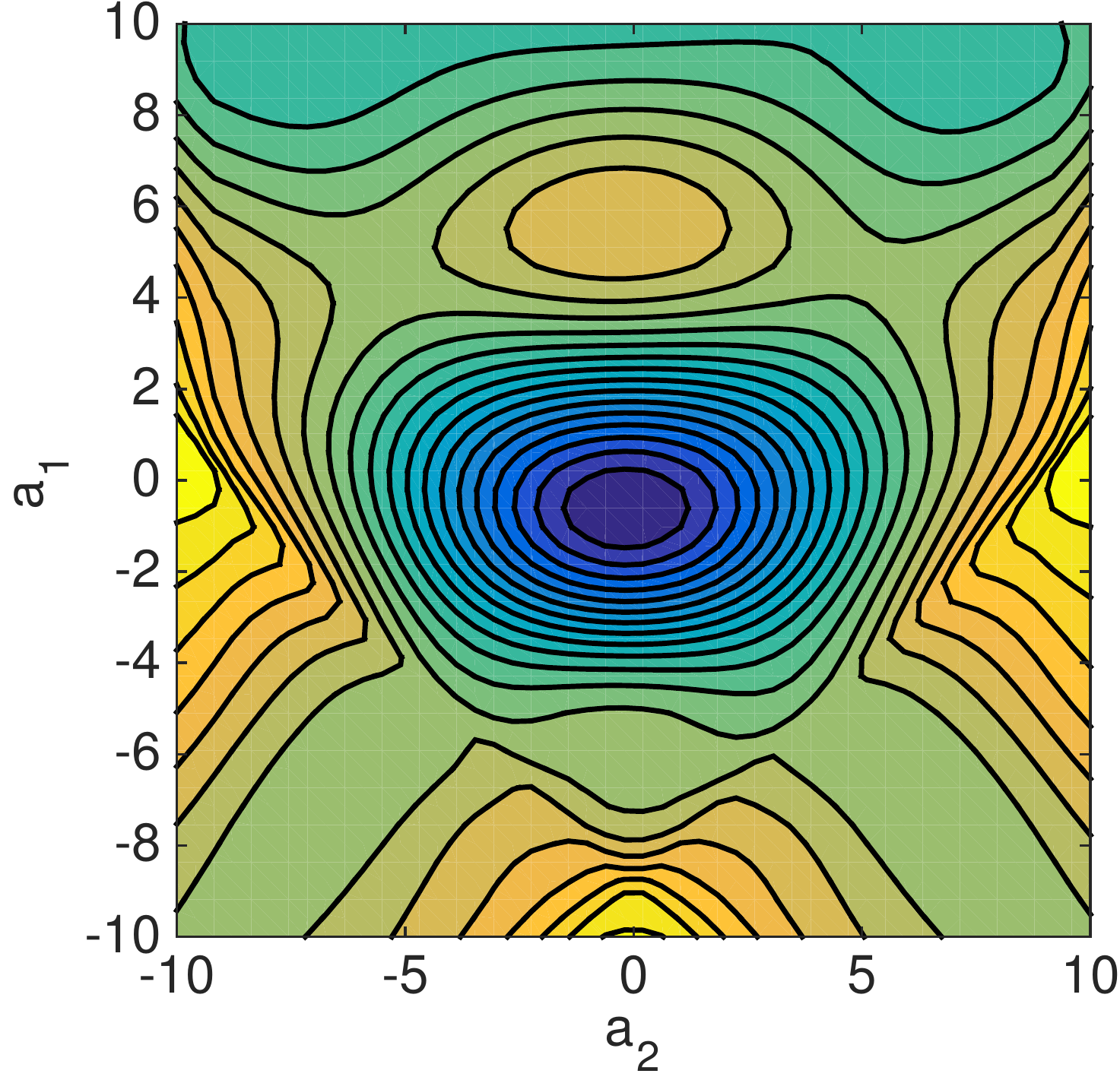}&
\includegraphics[scale=.2]{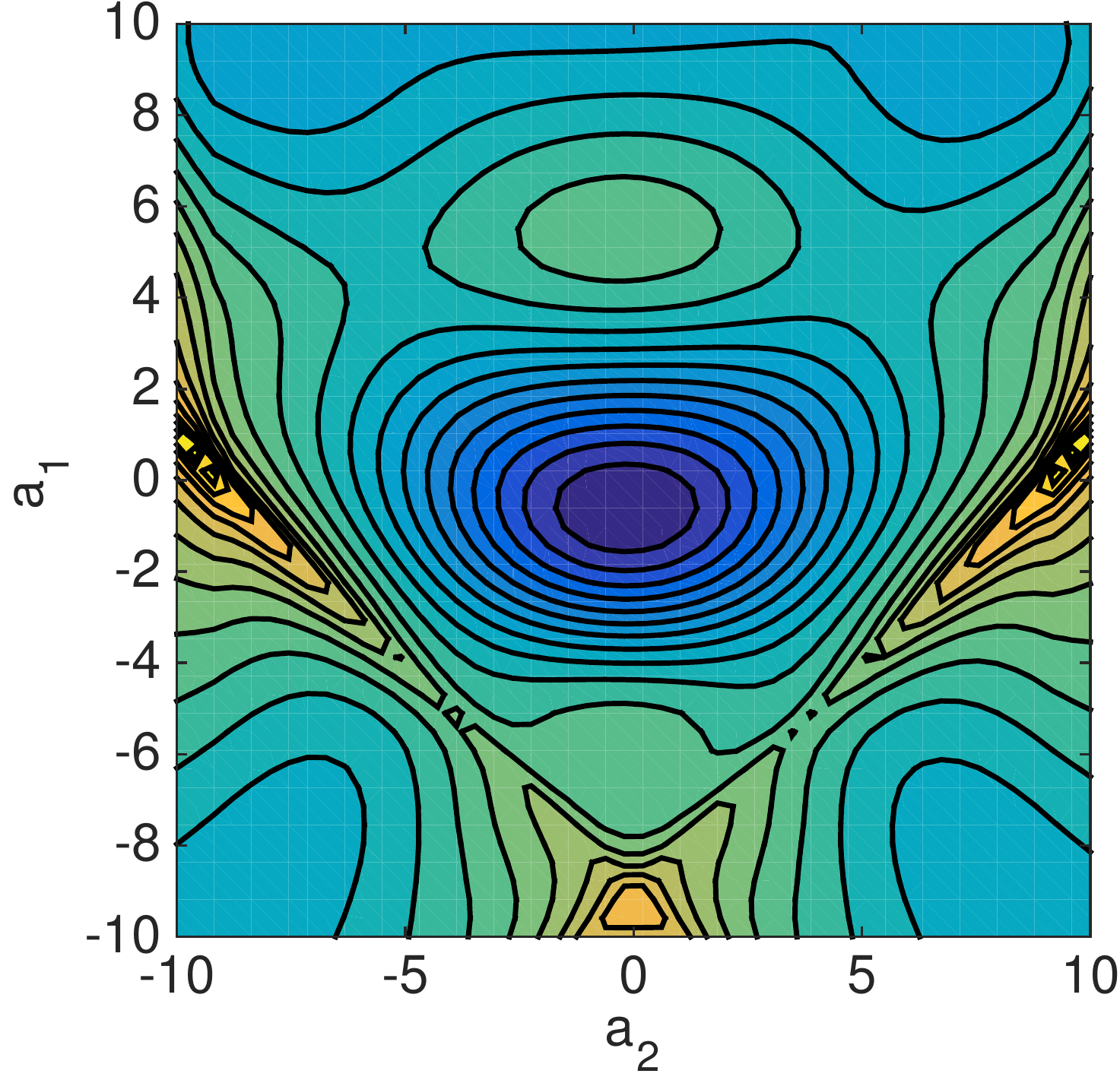}\\
{\small reduced}&{\small $\lambda=0.1$}&{\small $\lambda=1$}&{\small $\lambda=10$}\\
\end{tabular}
\caption{Misfit in the direction of the perturbations shown in figure \ref{fig:2D_exp0a}. For small $\lambda$, the reduced penalty objective $\phi_{\lambda}$ is less non-linear than the reduced objective $\phi$.}
\label{fig:2D_exp0b}
\end{figure}

\begin{figure}
\centering
\begin{tabular}{cc}
\includegraphics[scale=.3]{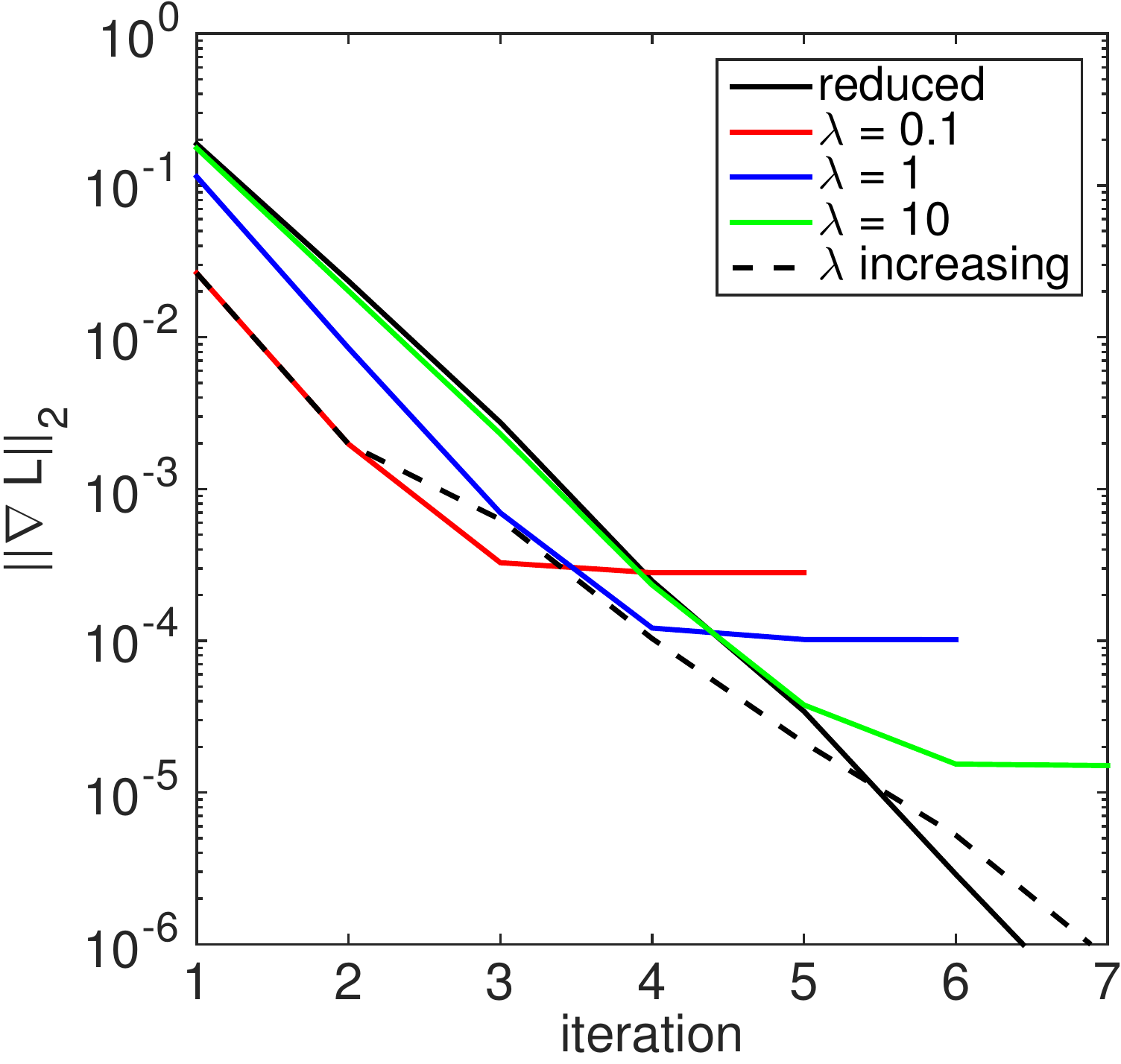}&
\includegraphics[scale=.3]{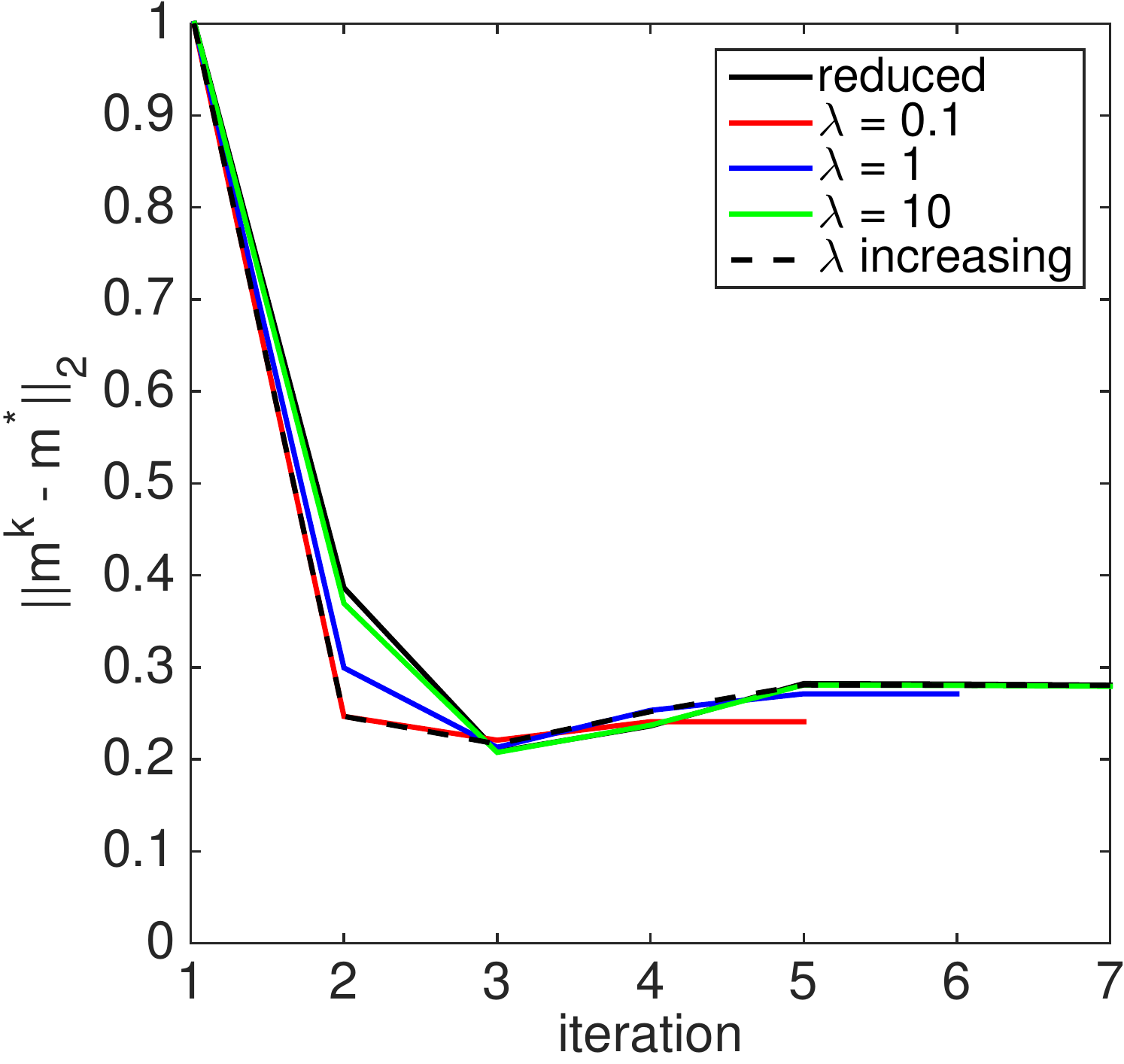}\\
\end{tabular}
\centering
\begin{tabular}{cccc}
\includegraphics[scale=.2]{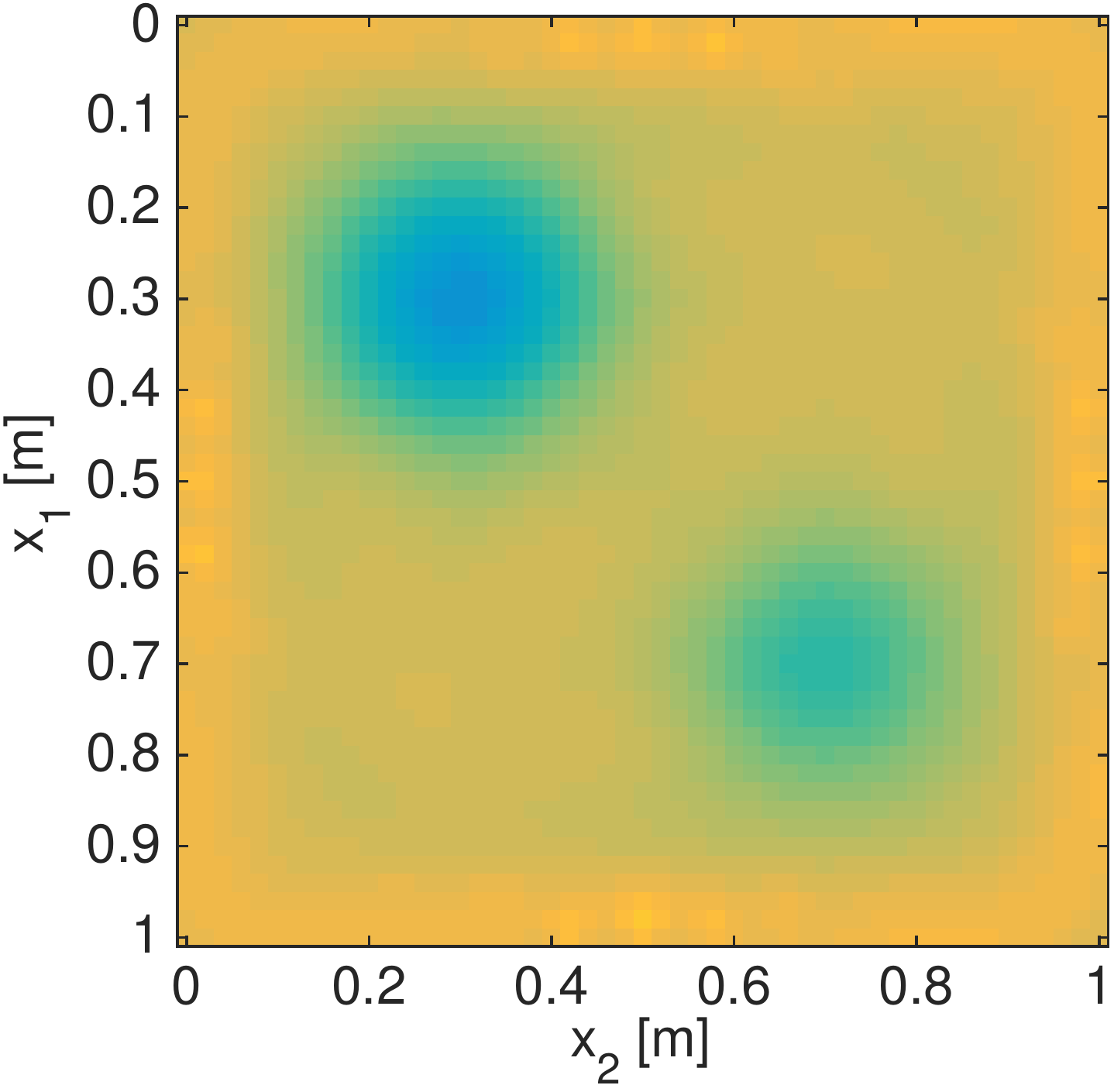}&
\includegraphics[scale=.2]{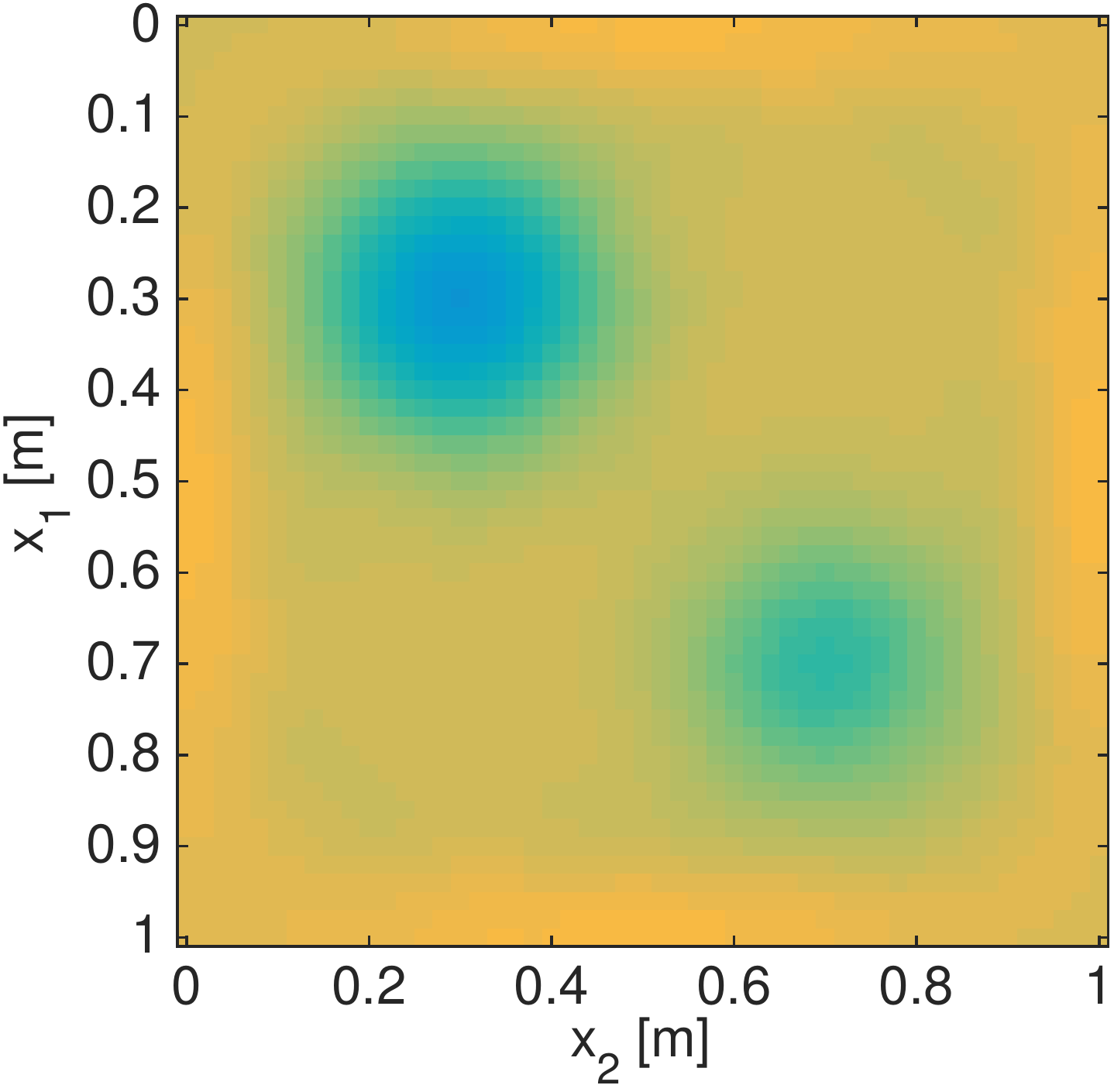}&
\includegraphics[scale=.2]{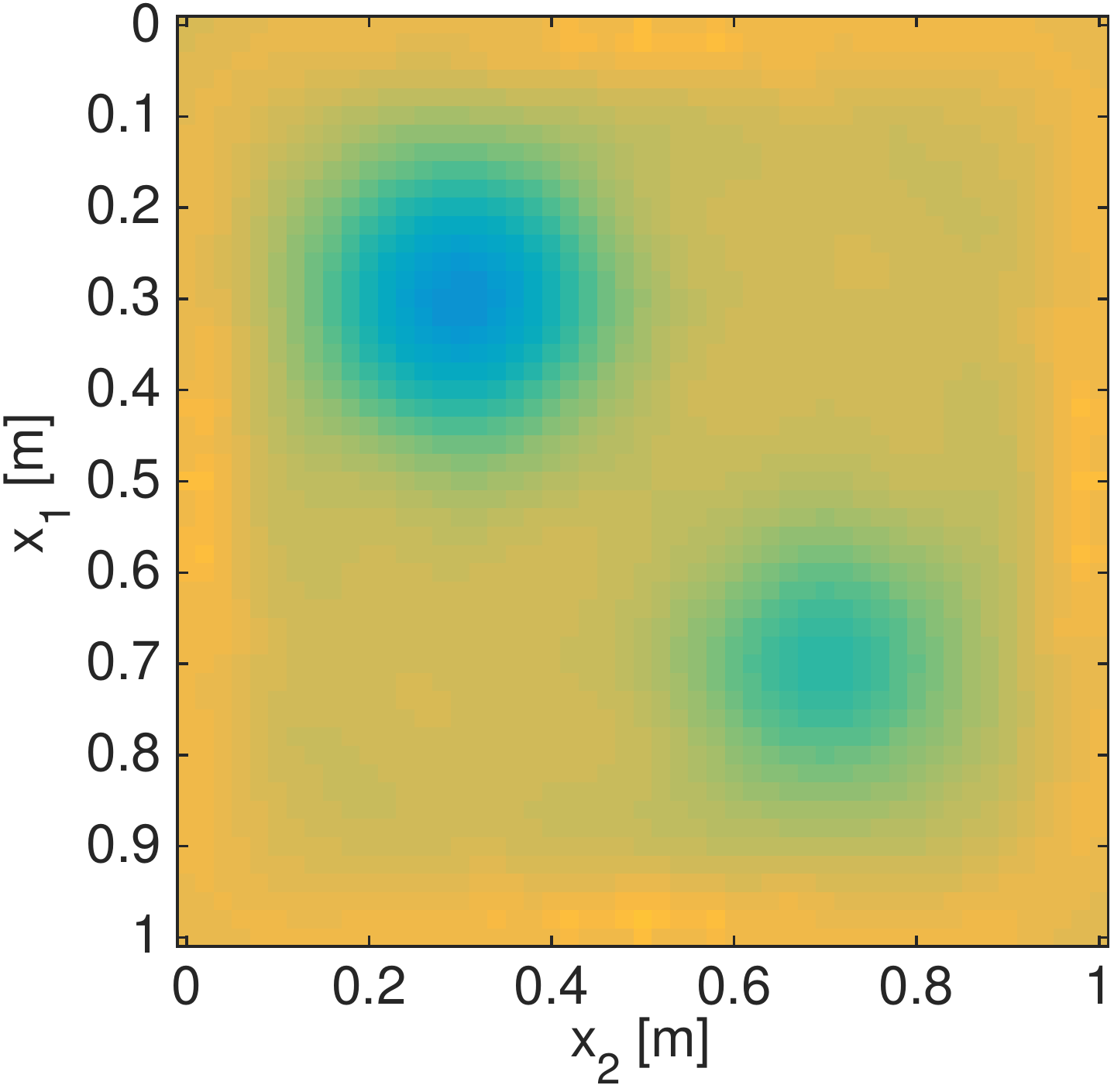}&
\includegraphics[scale=.2]{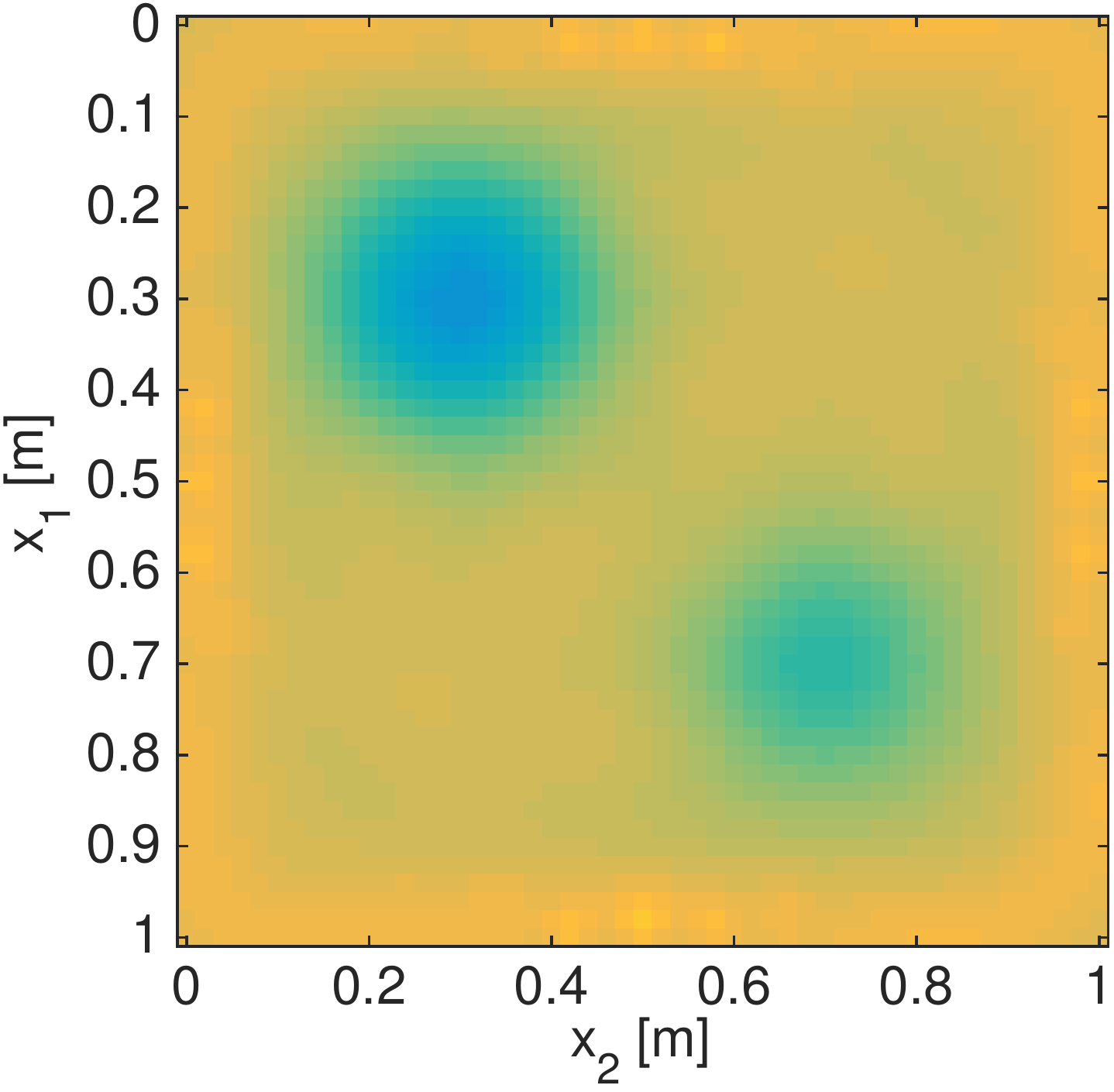}\\
{\small reduced}&{\small $\lambda=0.1$}&{\small $\lambda=1$}&{\small $\lambda=10$}\\
\end{tabular}
\caption{Convergence history, reconstruction error and reconstructions for data without noise using a GN method. Even though the penalty method does not converge to same tolerance as the reduced method in terms of the gradient of the Lagrangian, the resulting parameter estimates are almost the same.}
\label{fig:2D_exp1}
\end{figure}

\begin{figure}
\centering
\begin{tabular}{cc}
\includegraphics[scale=.3]{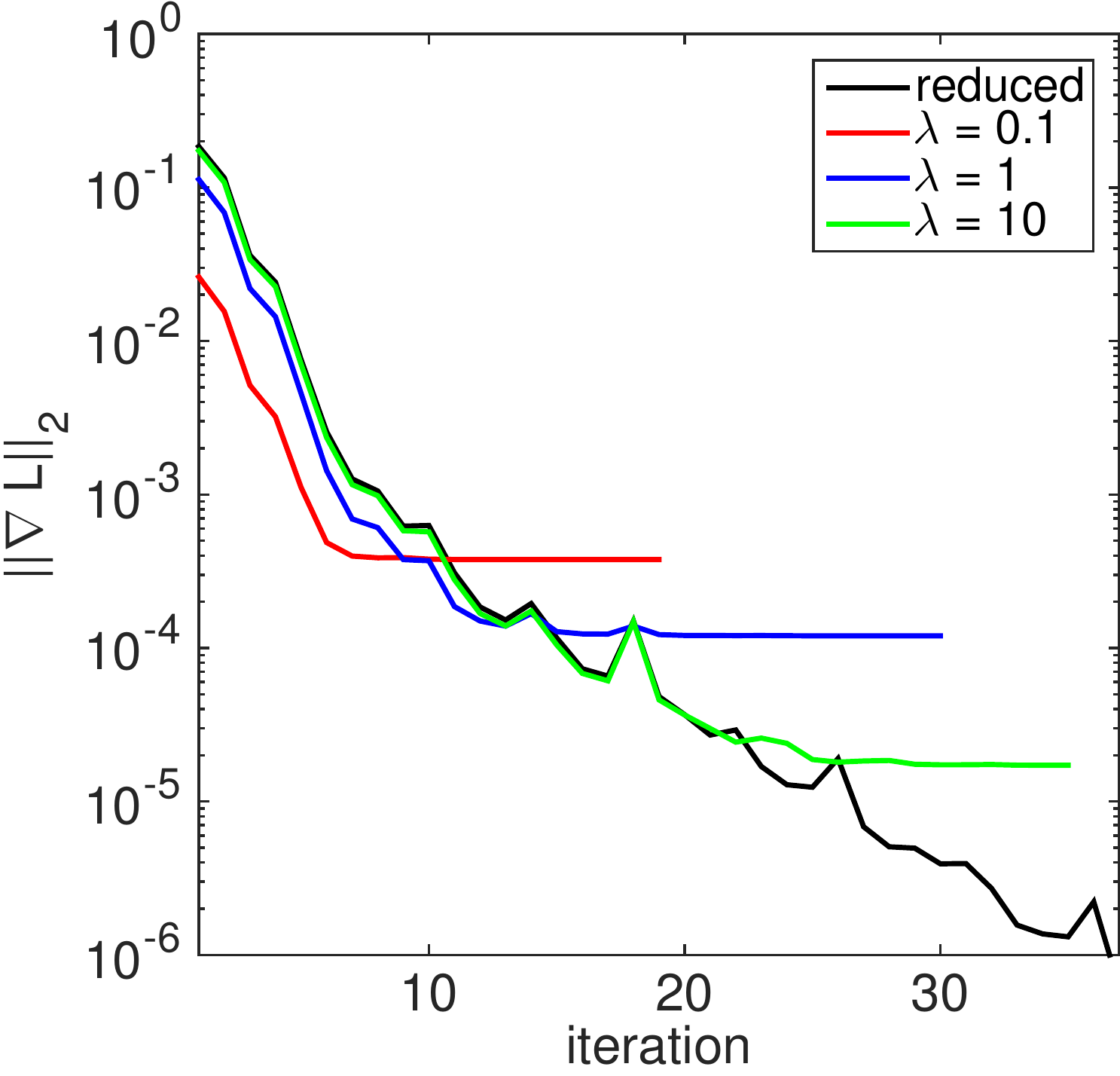}&
\includegraphics[scale=.3]{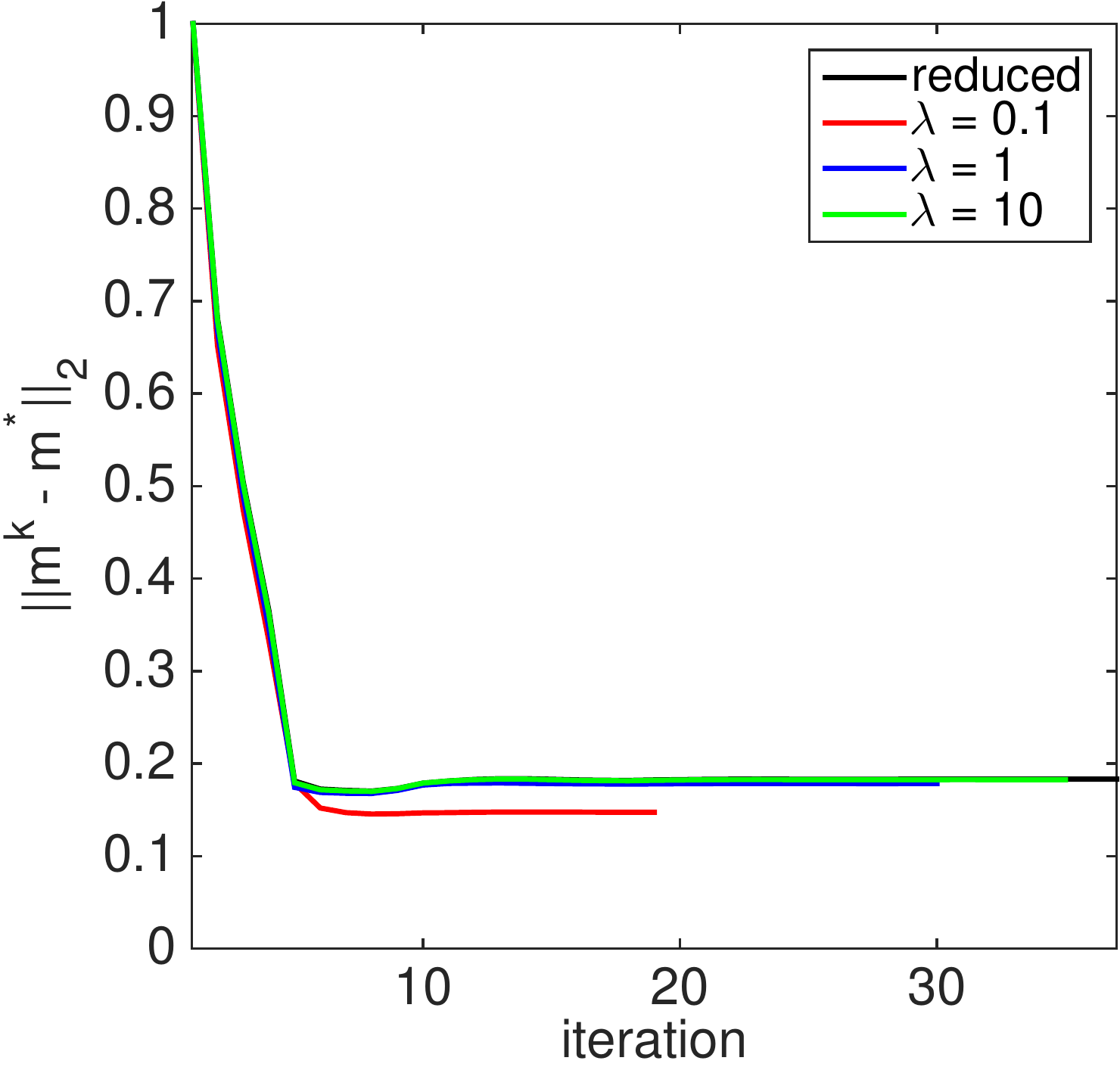}\\
\end{tabular}
\centering
\begin{tabular}{cccc}
\includegraphics[scale=.2]{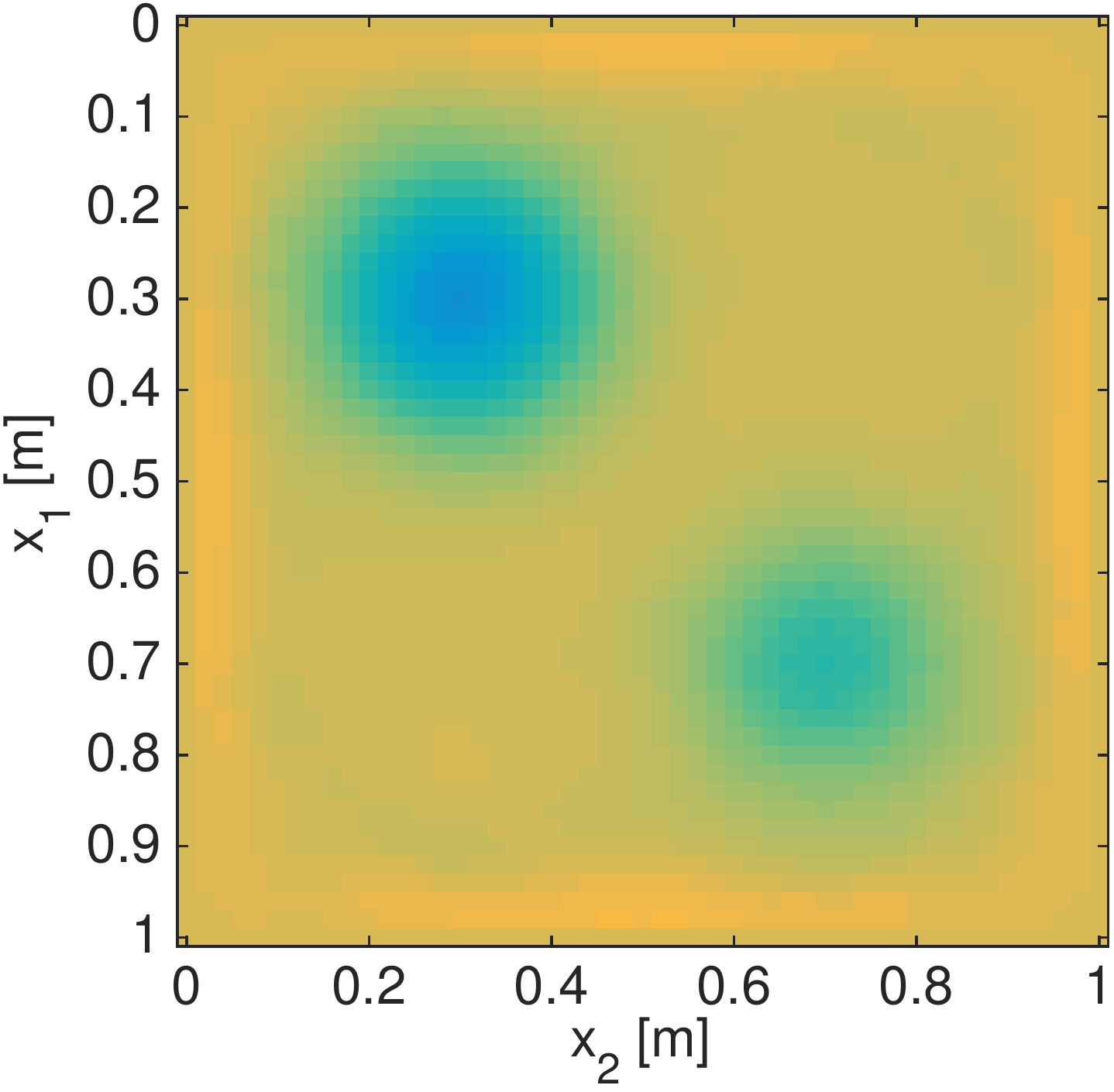}&
\includegraphics[scale=.2]{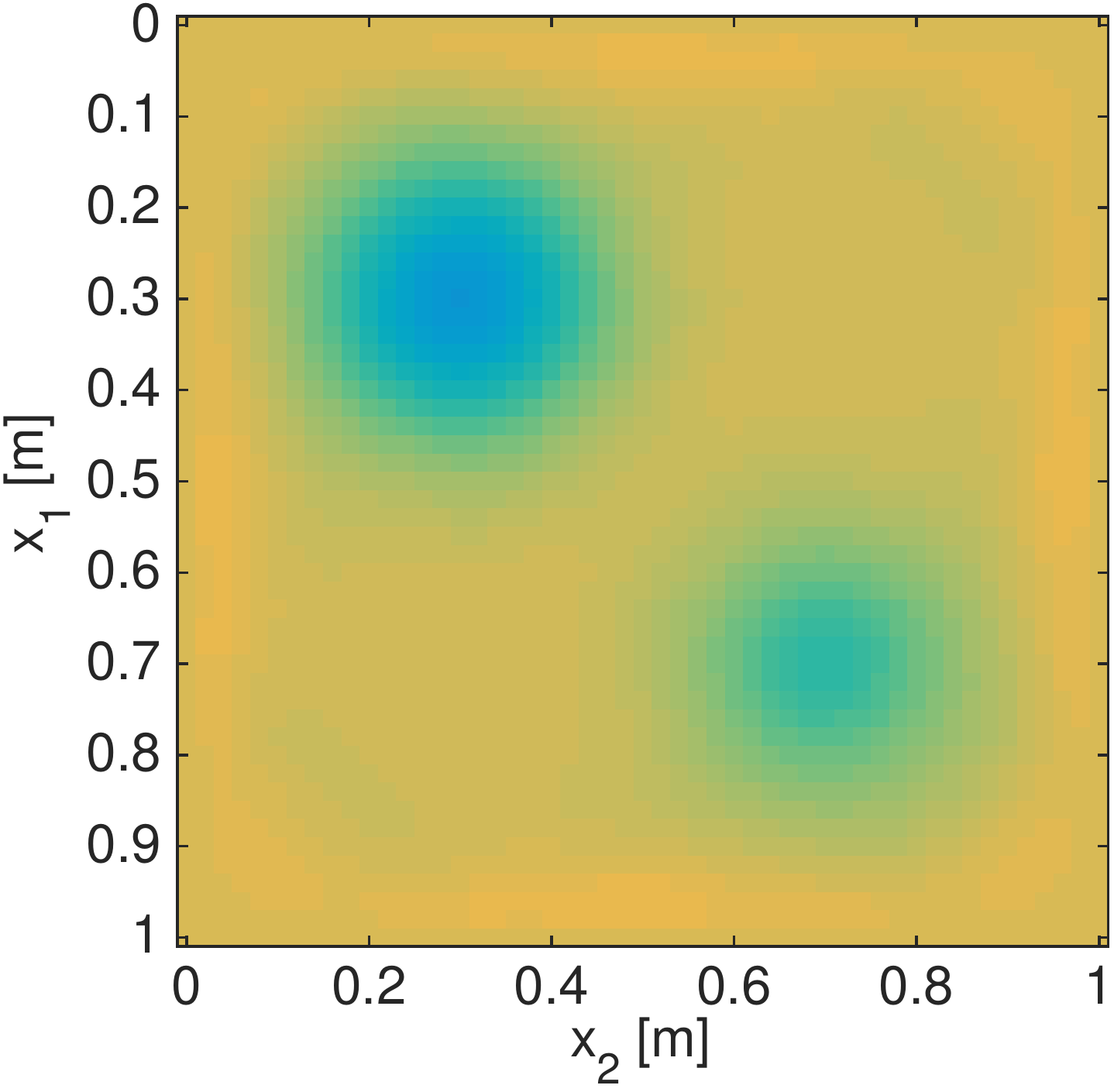}&
\includegraphics[scale=.2]{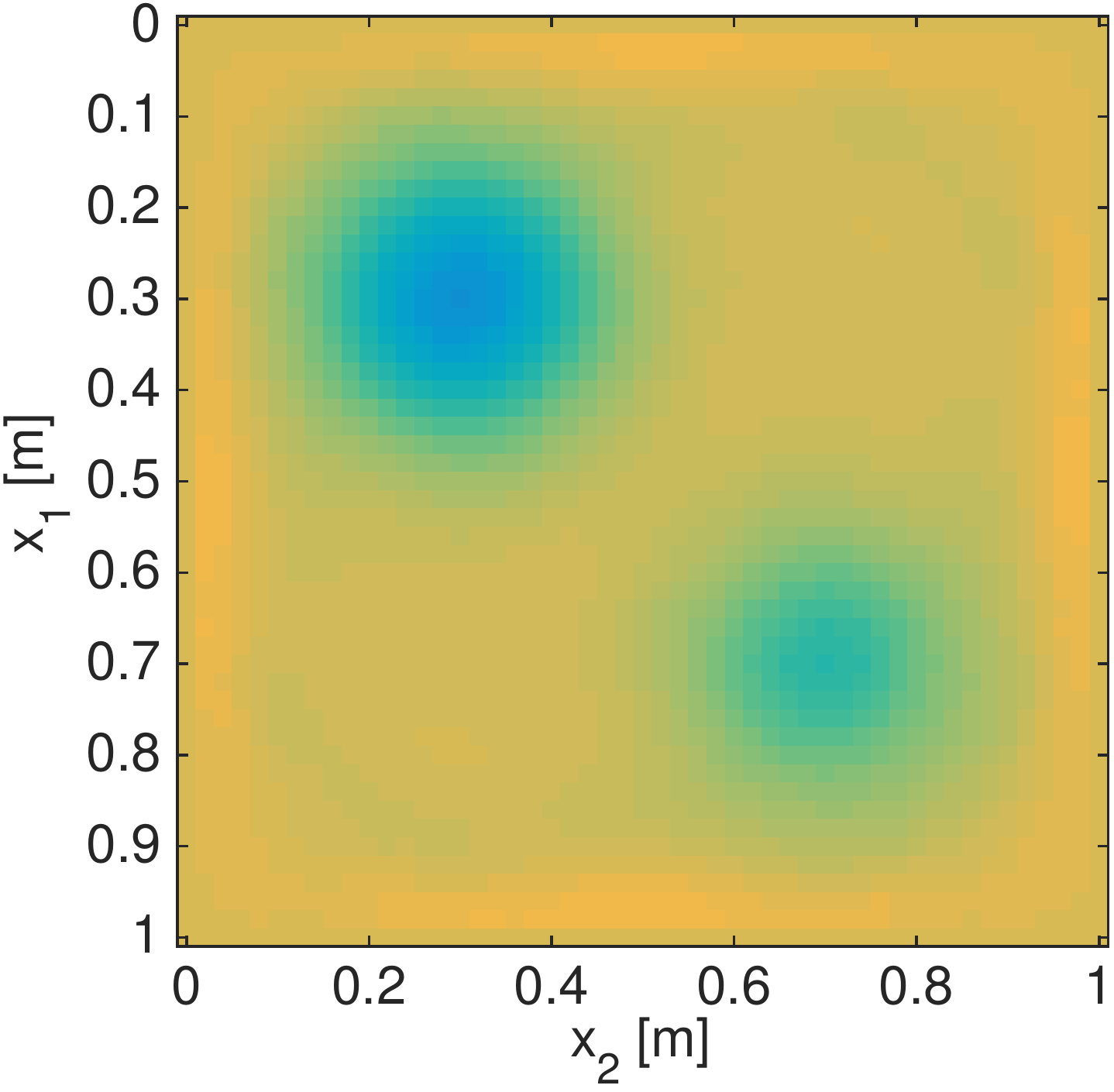}&
\includegraphics[scale=.2]{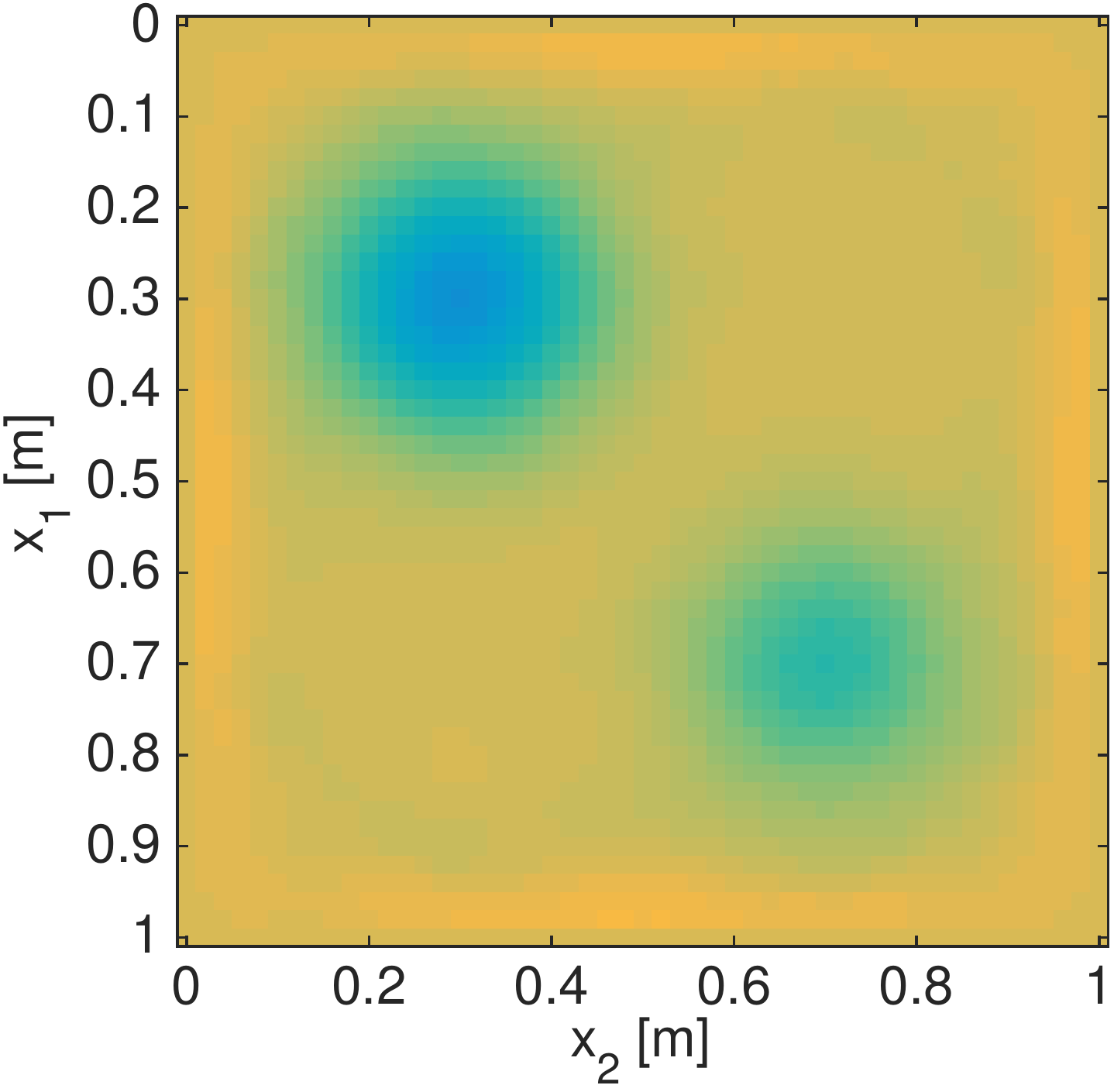}\\
{\small reduced}&{\small $\lambda=0.1$}&{\small $\lambda=1$}&{\small $\lambda=10$}\\
\end{tabular}
\caption{Convergence history, reconstruction error and reconstructions for data without noise using a QN method. Even though the penalty method does not converge to same tolerance as the reduced method in terms of the gradient of the Lagrangian, the resulting parameter estimates are almost the same.}
\label{fig:2D_exp2}
\end{figure}

\begin{figure}
\centering
\begin{tabular}{cc}
\includegraphics[scale=.3]{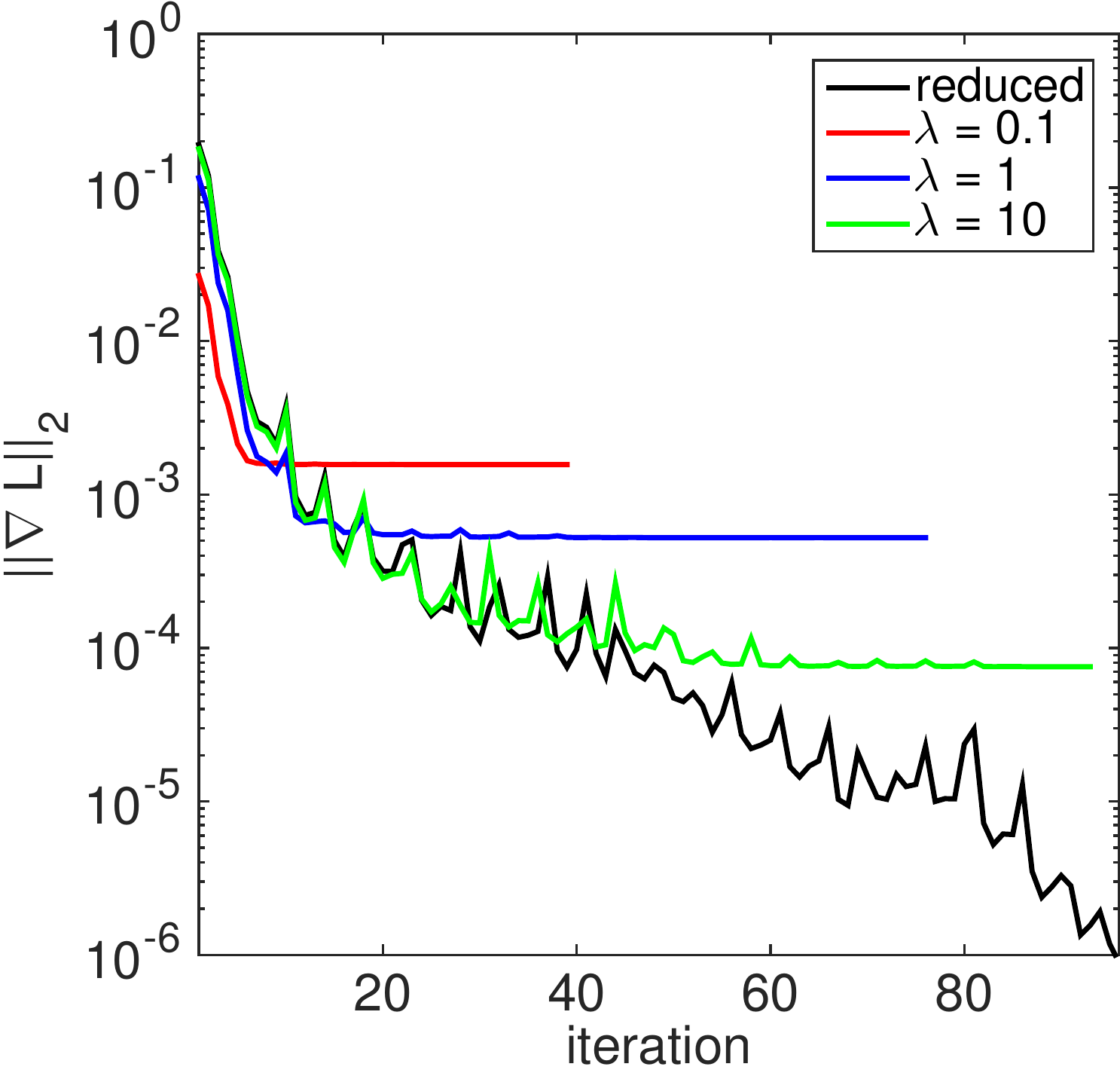}&
\includegraphics[scale=.3]{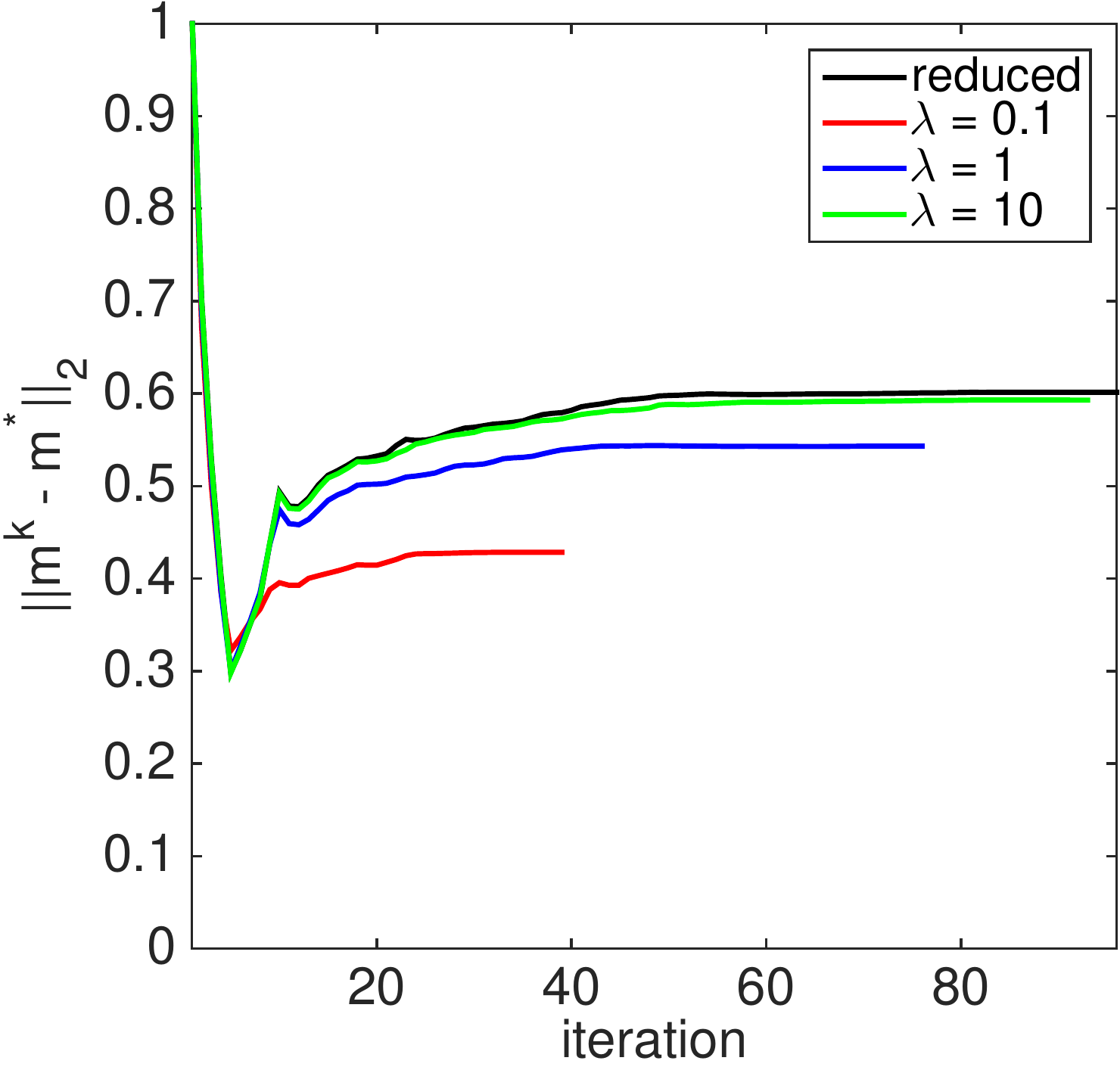}\\
\end{tabular}
\centering
\begin{tabular}{cccc}
\includegraphics[scale=.2]{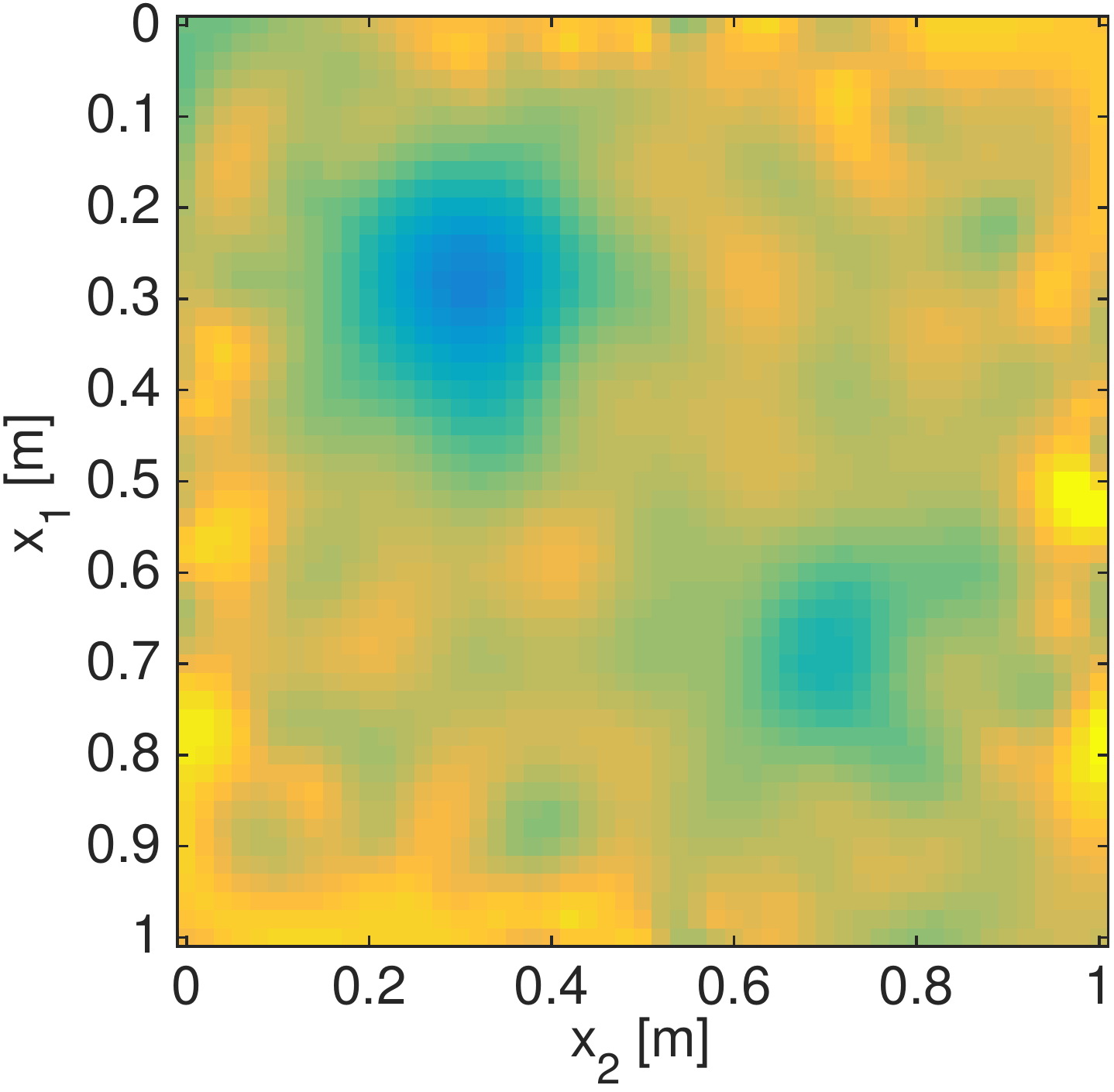}&
\includegraphics[scale=.2]{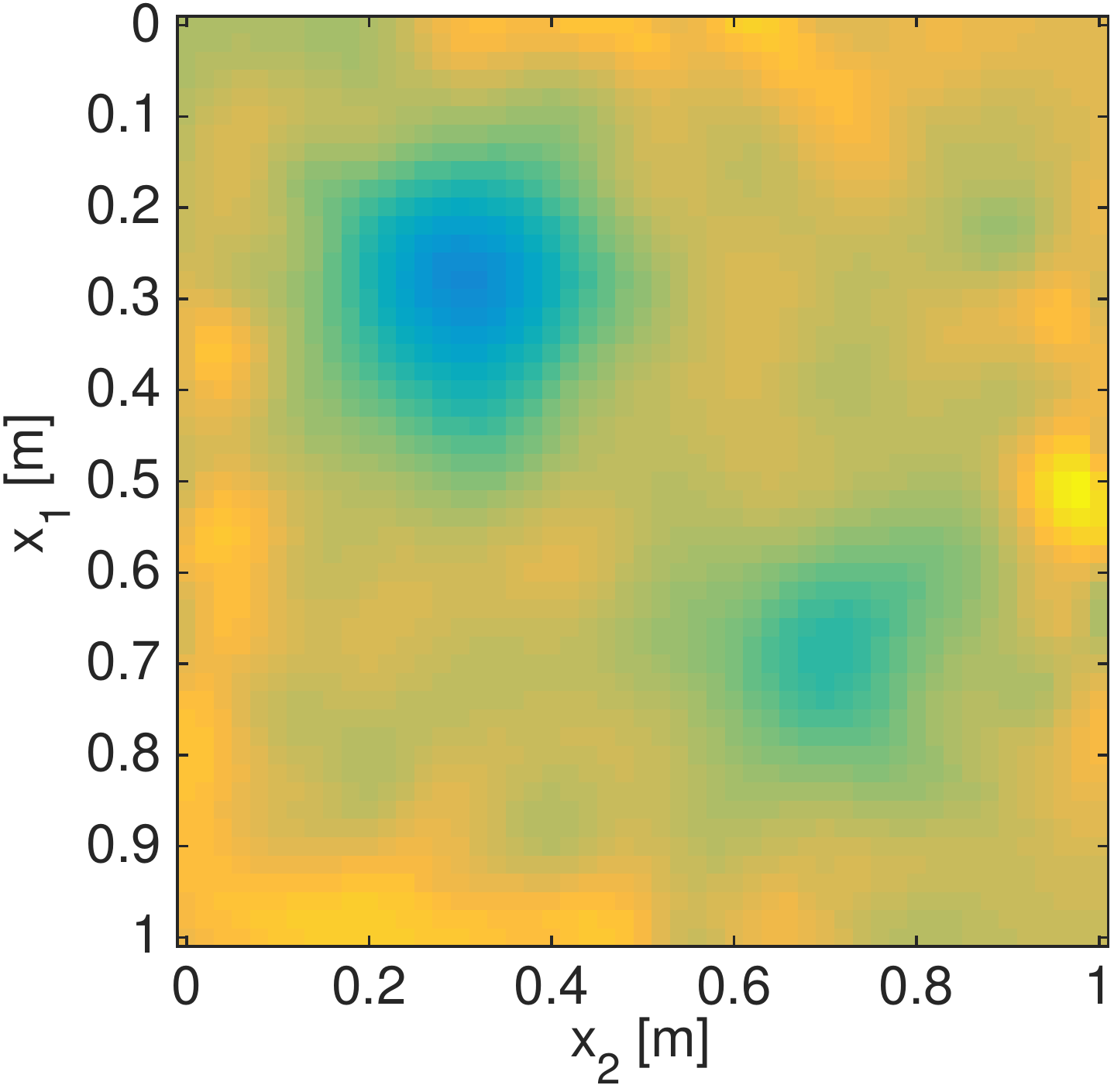}&
\includegraphics[scale=.2]{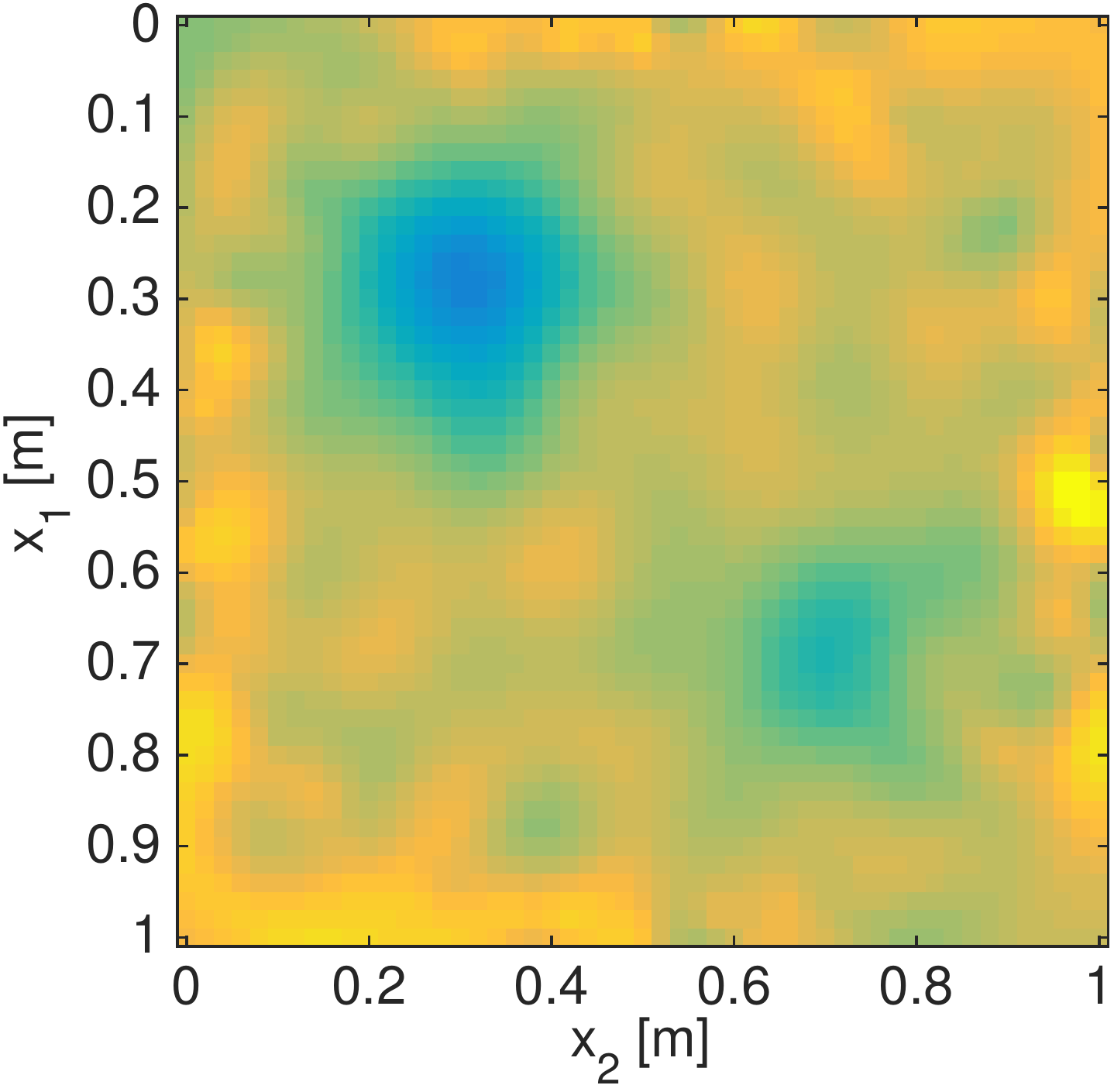}&
\includegraphics[scale=.2]{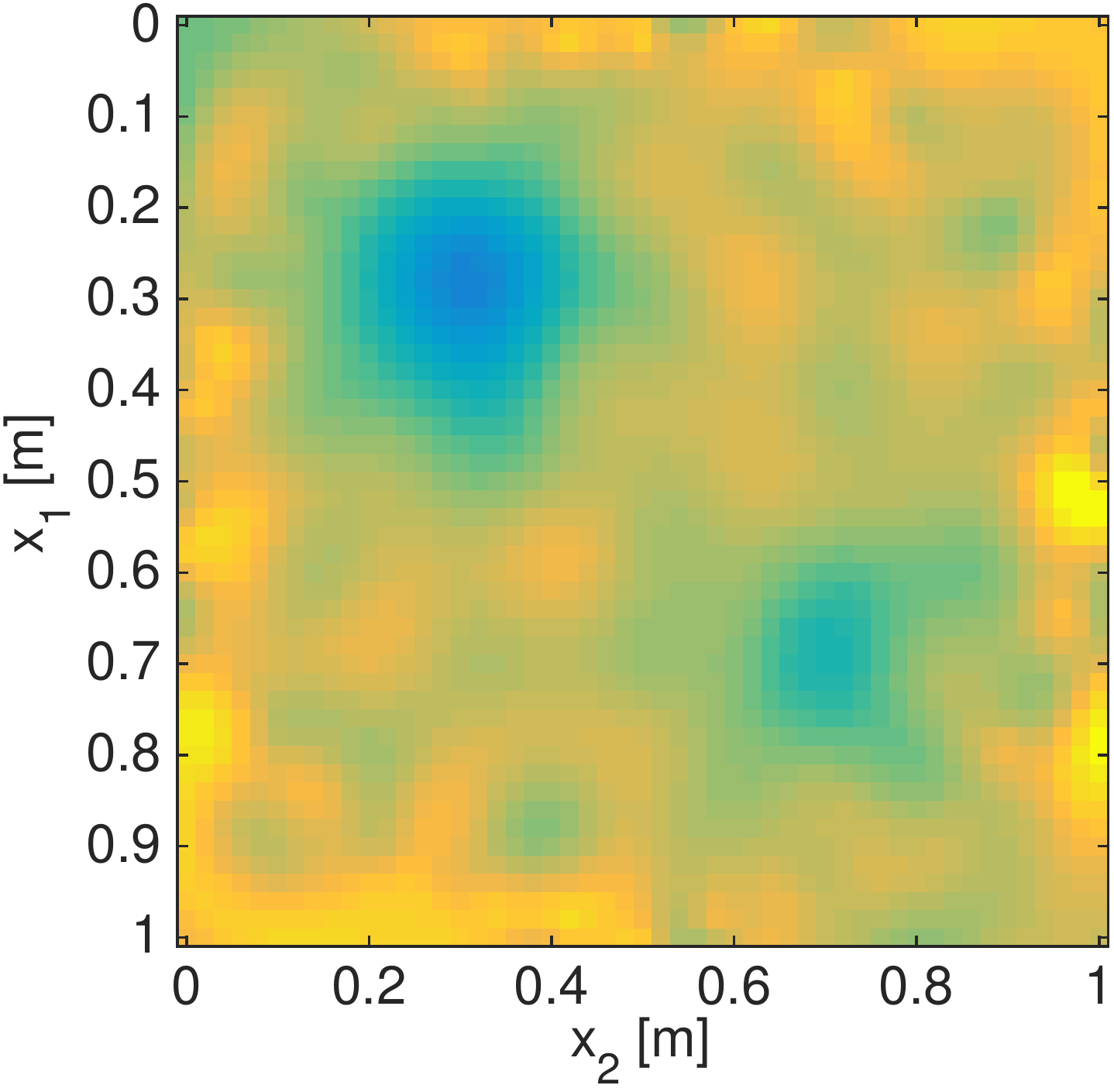}\\
{\small reduced}&{\small $\lambda=0.1$}&{\small $\lambda=1$}&{\small $\lambda=10$}\\
\end{tabular}
\caption{Convergence history, reconstruction error and reconstructions for data with 10\% Gaussian noise using a QN method. Even though the penalty method does not converge to same tolerance as the reduced method in terms of the gradient of the Lagrangian, the resulting parameter estimates are almost the same. In fact, for small $\lambda$, result is even a little better.}
\label{fig:2D_exp3}
\end{figure}

\begin{figure}
\centering
\begin{tabular}{cc}
\includegraphics[scale=.3]{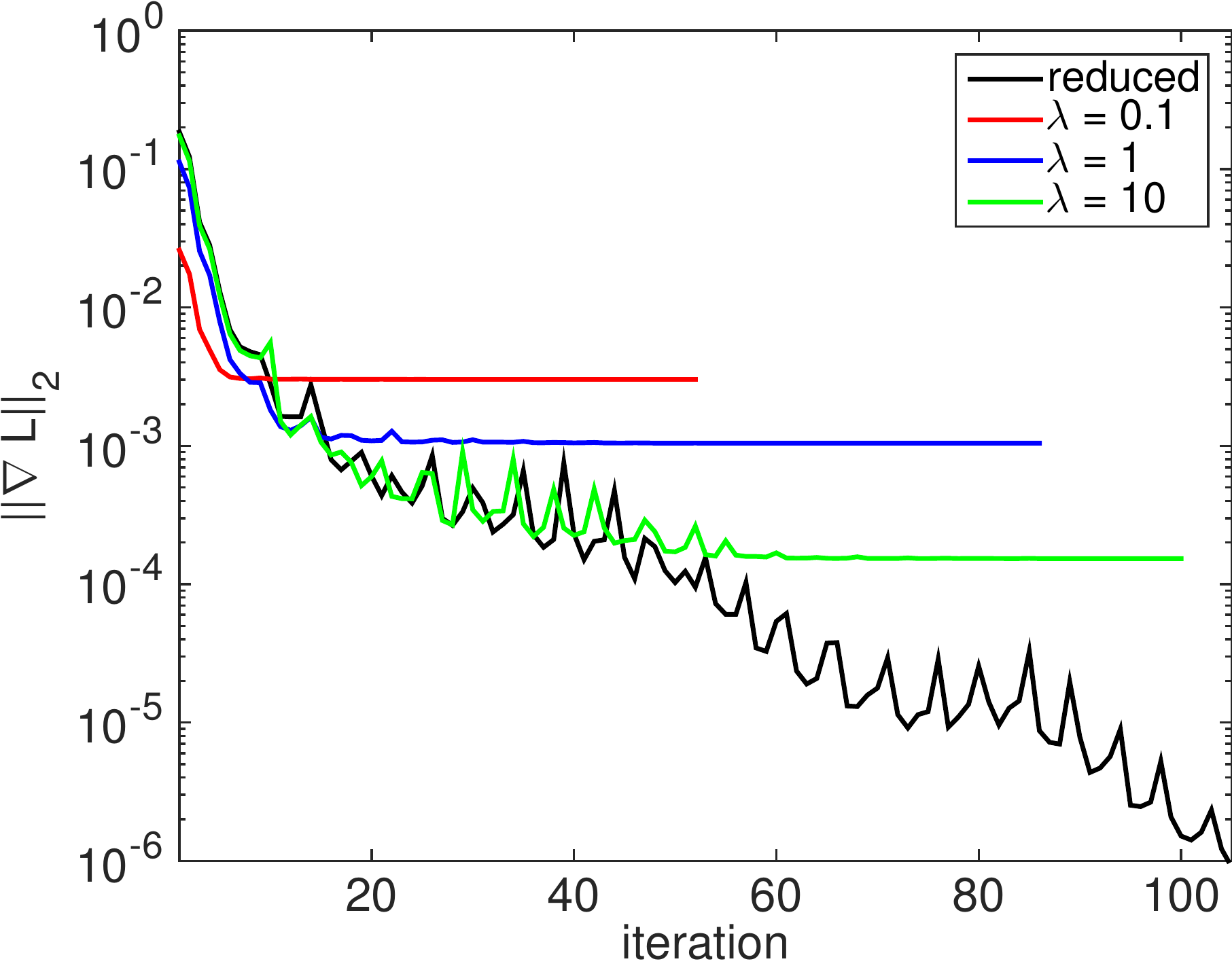}&
\includegraphics[scale=.3]{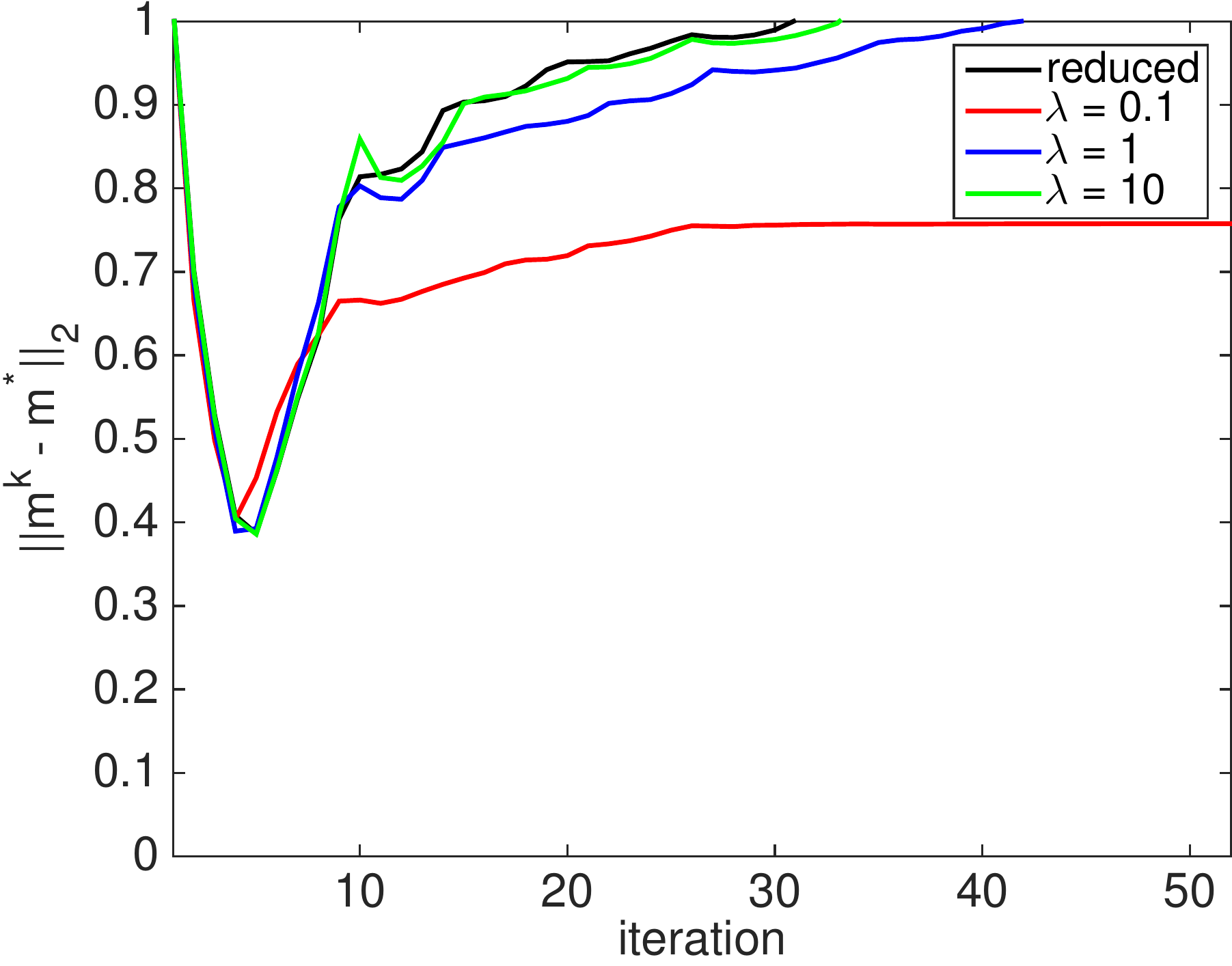}\\
\end{tabular}
\centering
\begin{tabular}{cccc}
\includegraphics[scale=.2]{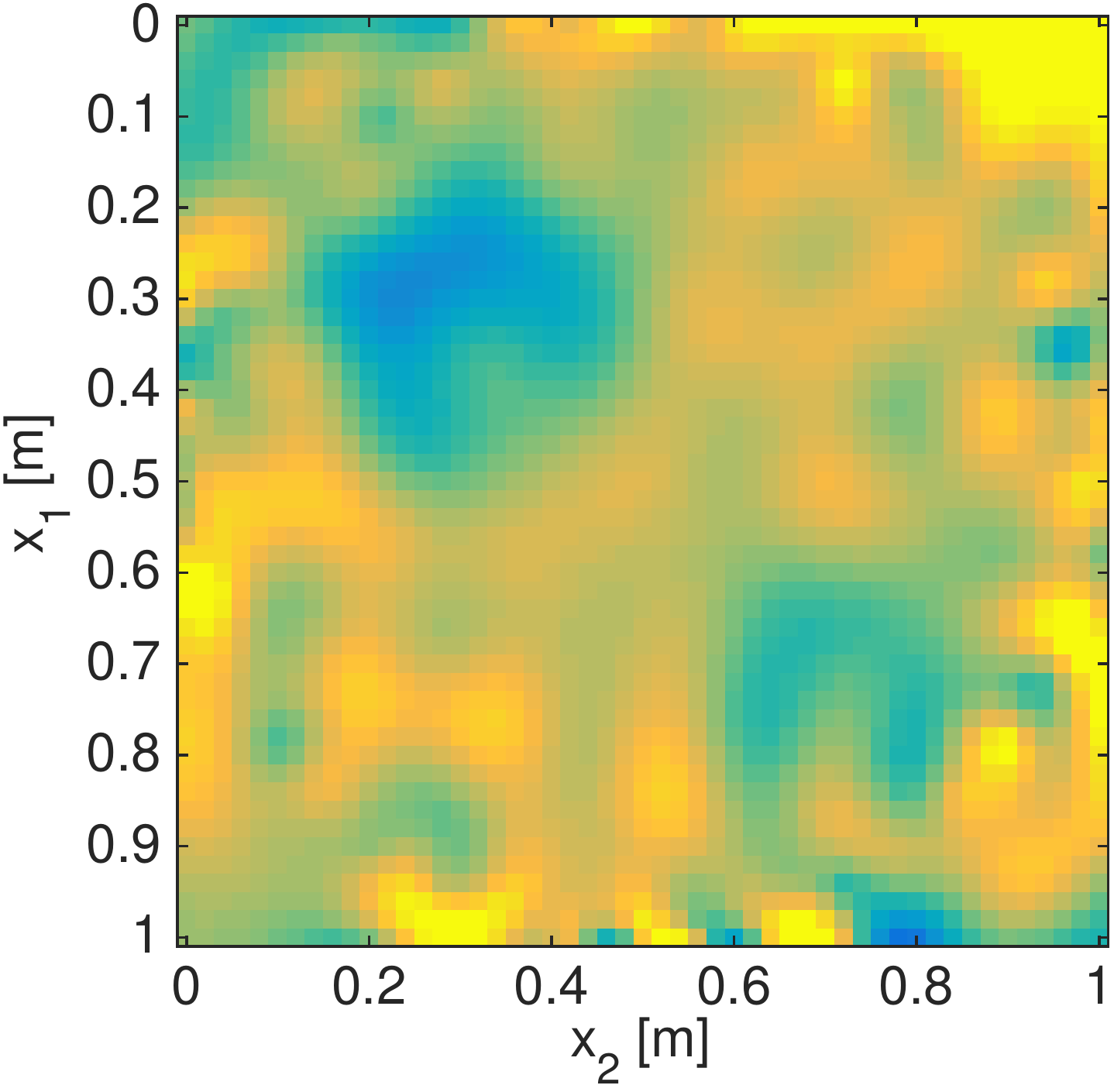}&
\includegraphics[scale=.2]{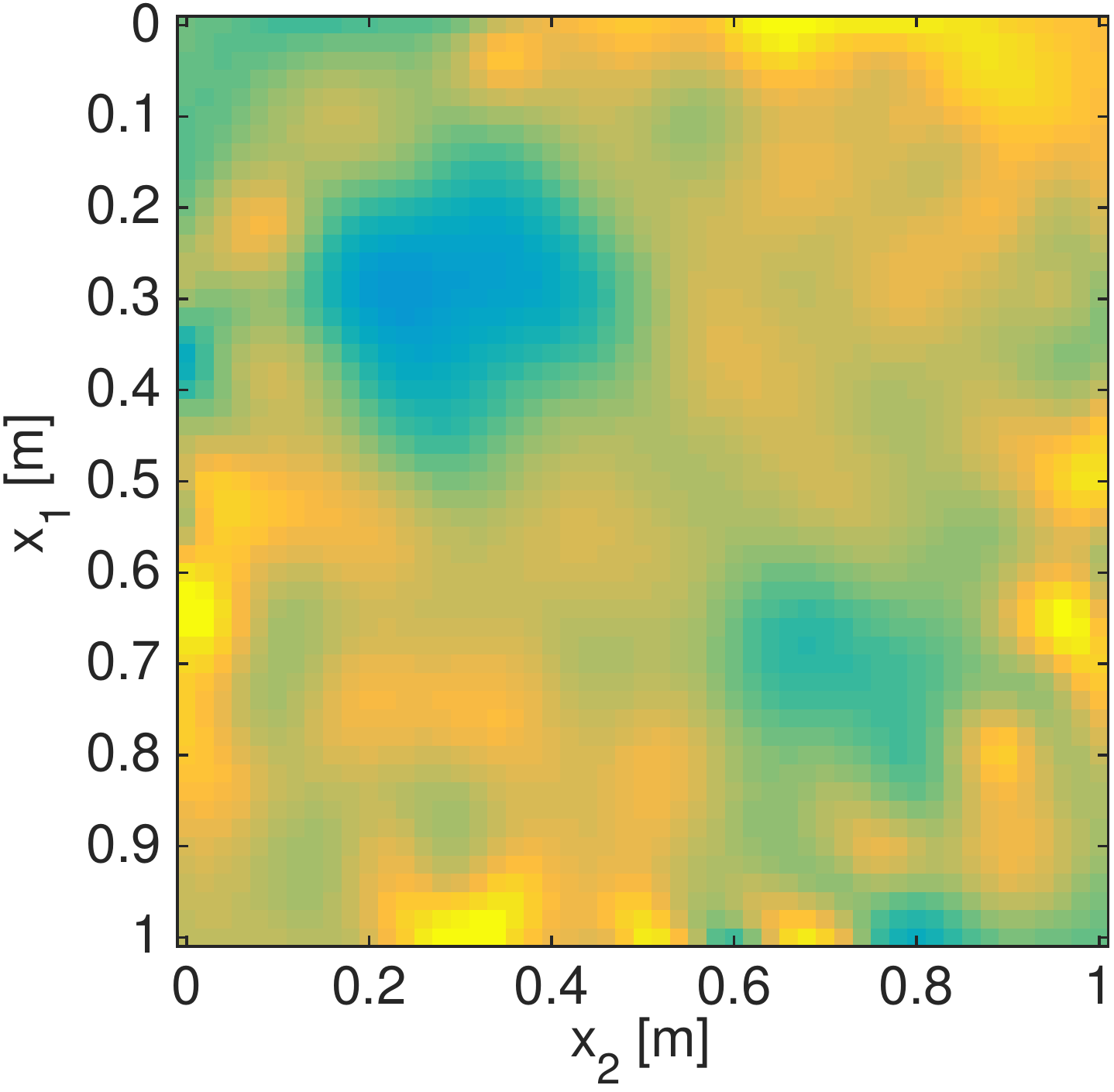}&
\includegraphics[scale=.2]{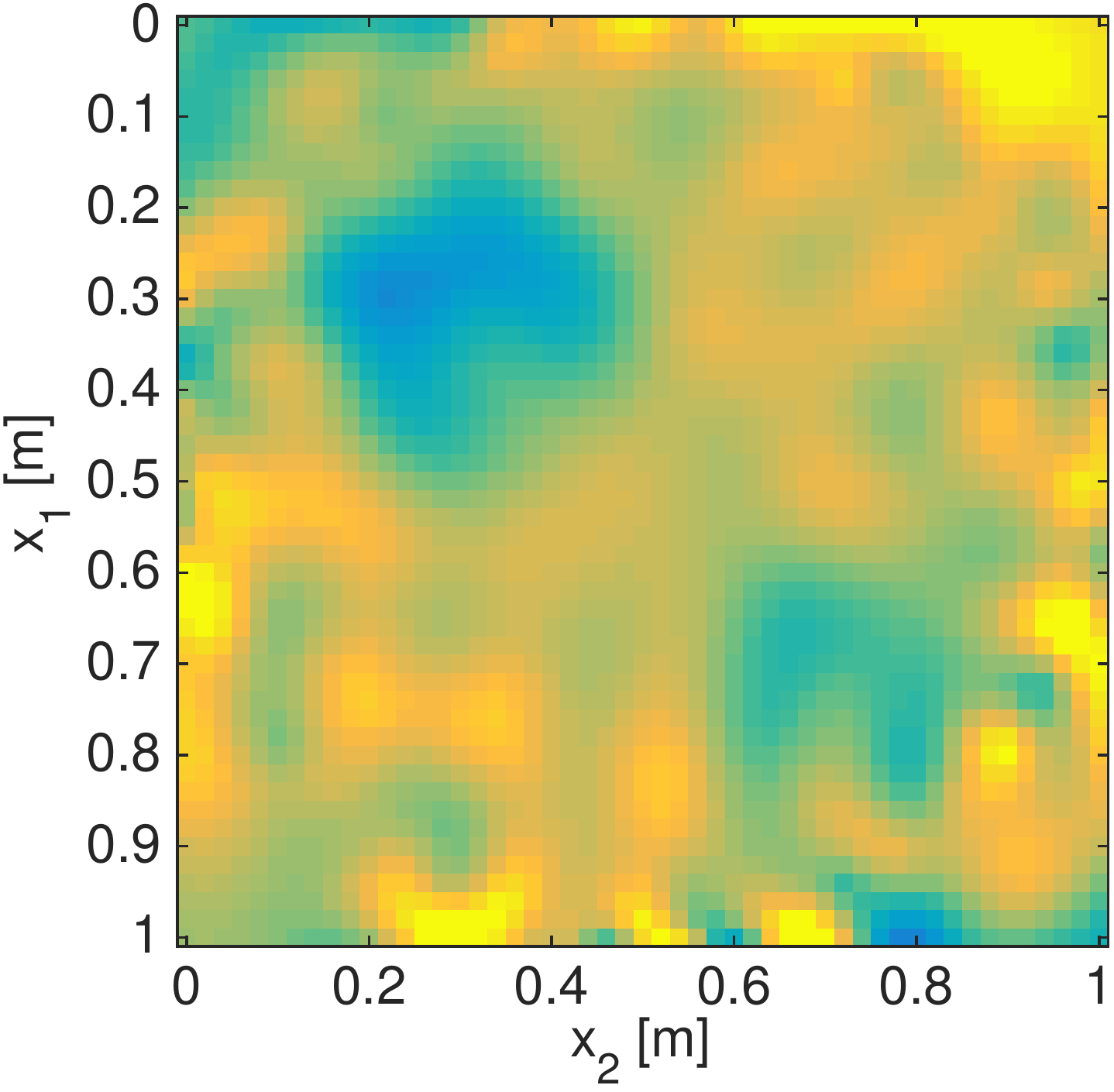}&
\includegraphics[scale=.2]{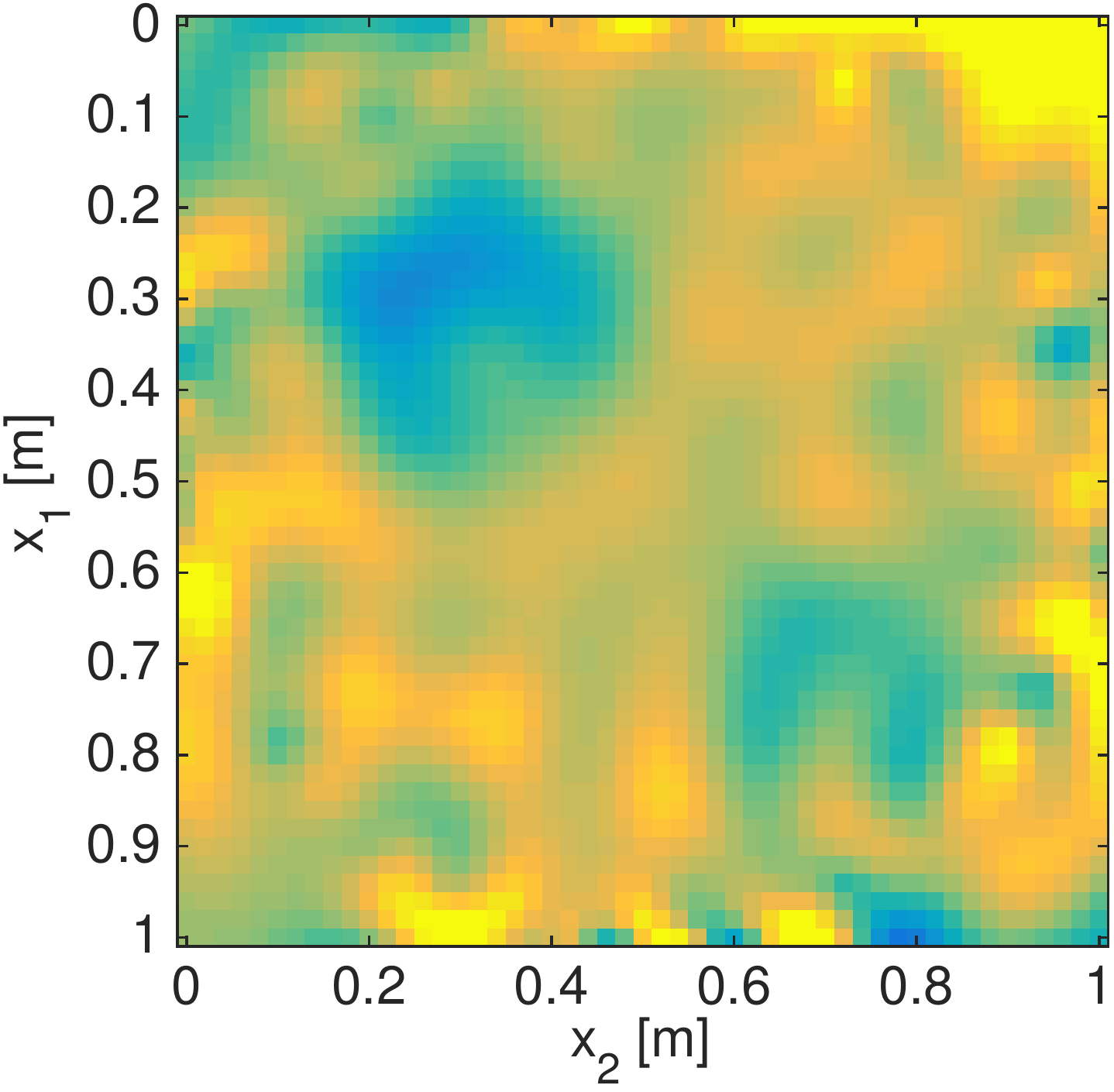}\\
{\small reduced}&{\small $\lambda=0.1$}&{\small $\lambda=1$}&{\small $\lambda=10$}\\
\end{tabular}
\caption{Convergence history, reconstruction error and reconstructions for data with 20\% Gaussian noise using a QN method. Even though the penalty method does not converge to same tolerance as the reduced method in terms of the gradient of the Lagrangian, the resulting parameter estimates are almost the same. In fact, for small $\lambda$, result is even a little better.}
\label{fig:2D_exp4}
\end{figure}

\begin{figure}
\centering
\includegraphics[scale=.7]{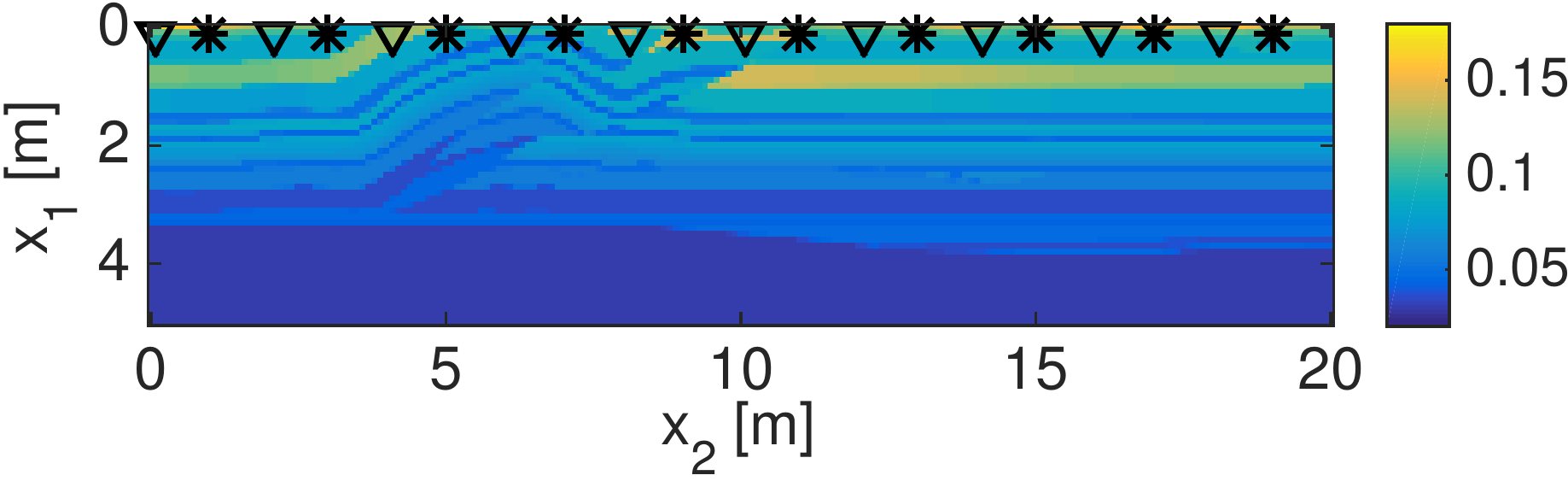}\\
\begin{tabular}{cc}
\includegraphics[scale=.3]{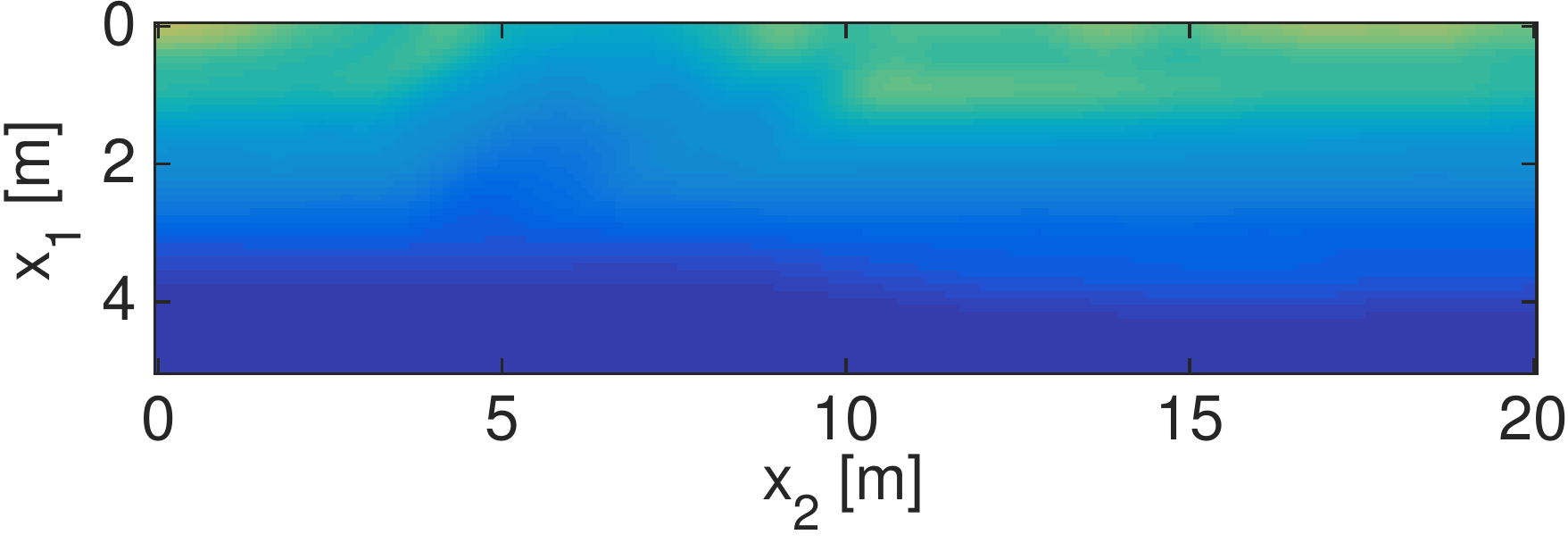}&
\includegraphics[scale=.3]{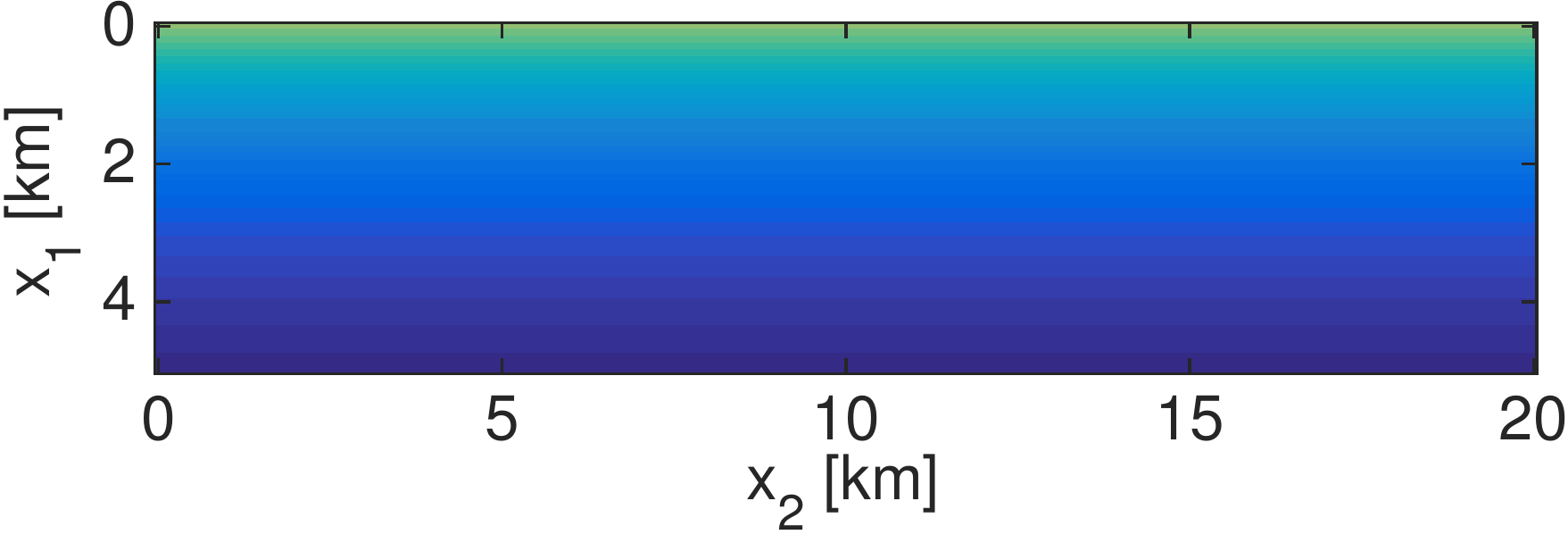}\\
\includegraphics[scale=.3]{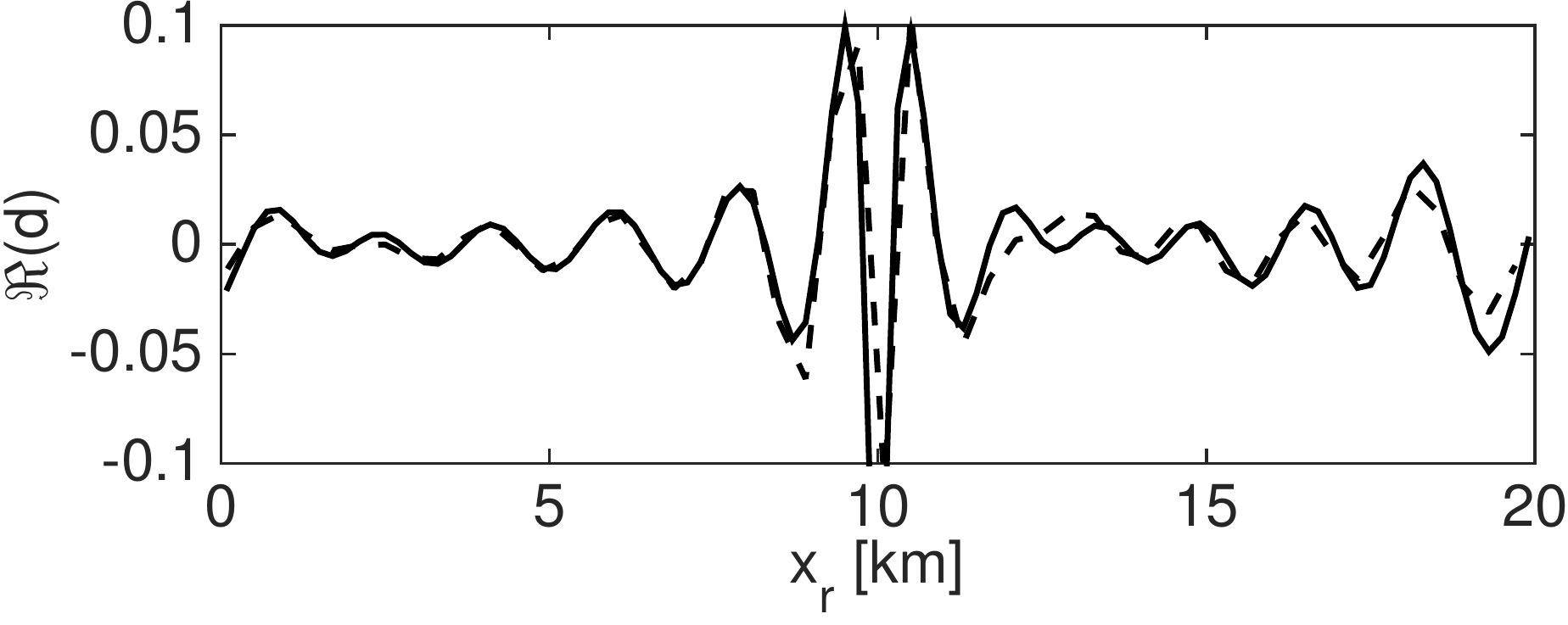}&
\includegraphics[scale=.3]{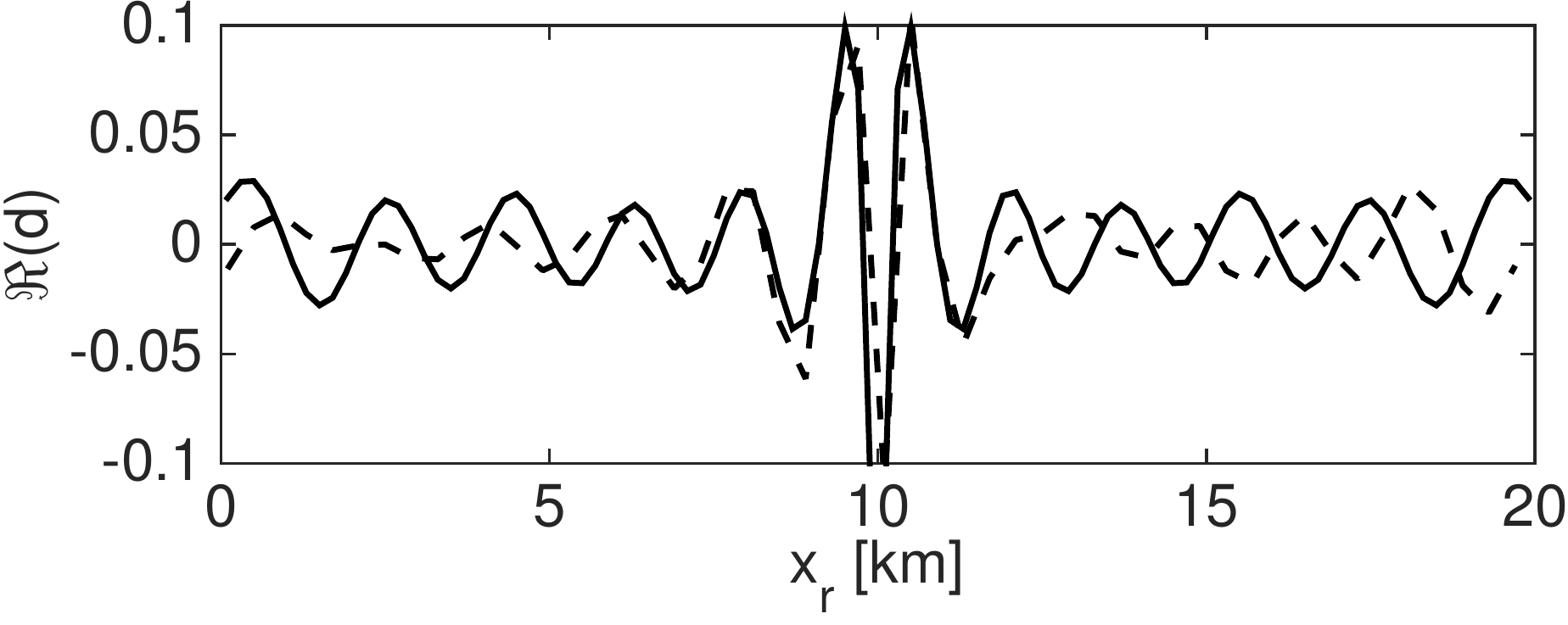}\\
\end{tabular}
\caption{Ground truth ($s^2/km^2$) (top) with locations of the sources ($*$) and receivers ($\bigtriangledown$) and initial iterates I (middle, left) and II (bottom, right). The bottom row shows the data for a source in the centre for the ground truth (dashed line) as well as the data for the two initial iterates. The first initial iterate produces data that differs only slightly from the observed data and inversion is considered to be easy. The second initial iterate produces data that is shifted significantly with respect to the observed data and inversion is considered to be difficult.}
\label{fig:overthrust_model}
\end{figure}

\begin{figure}
\centering
\begin{tabular}{cc}
\includegraphics[scale=.3]{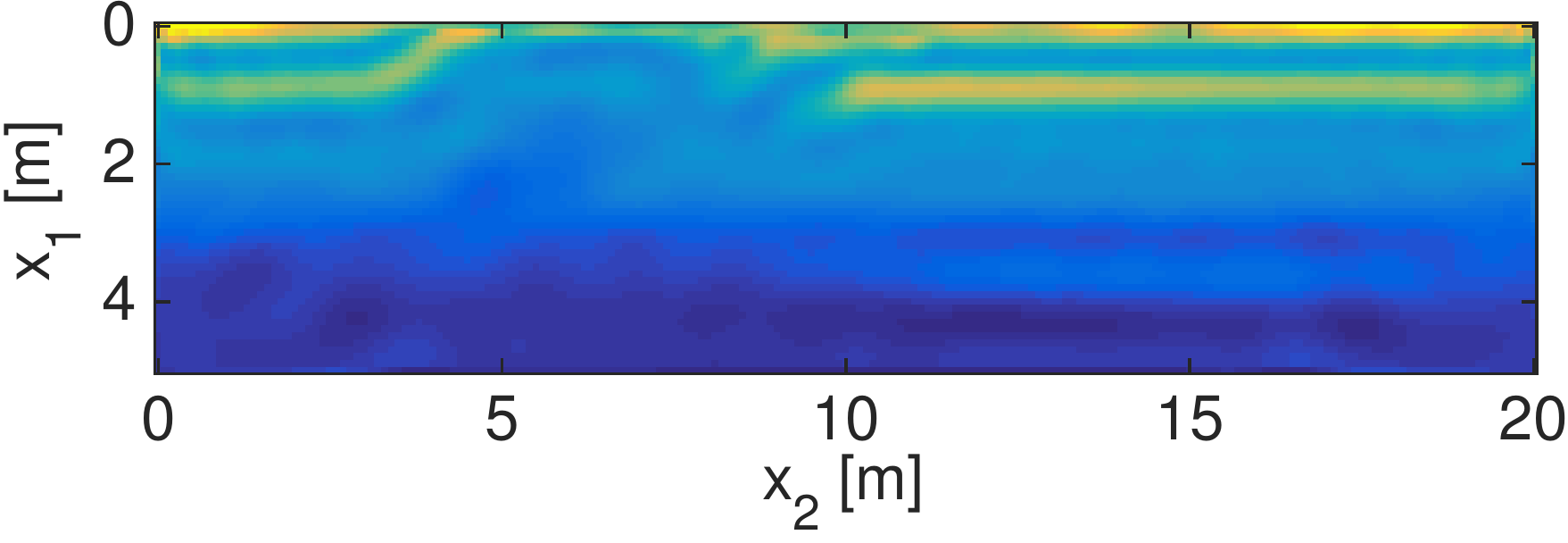}&
\includegraphics[scale=.3]{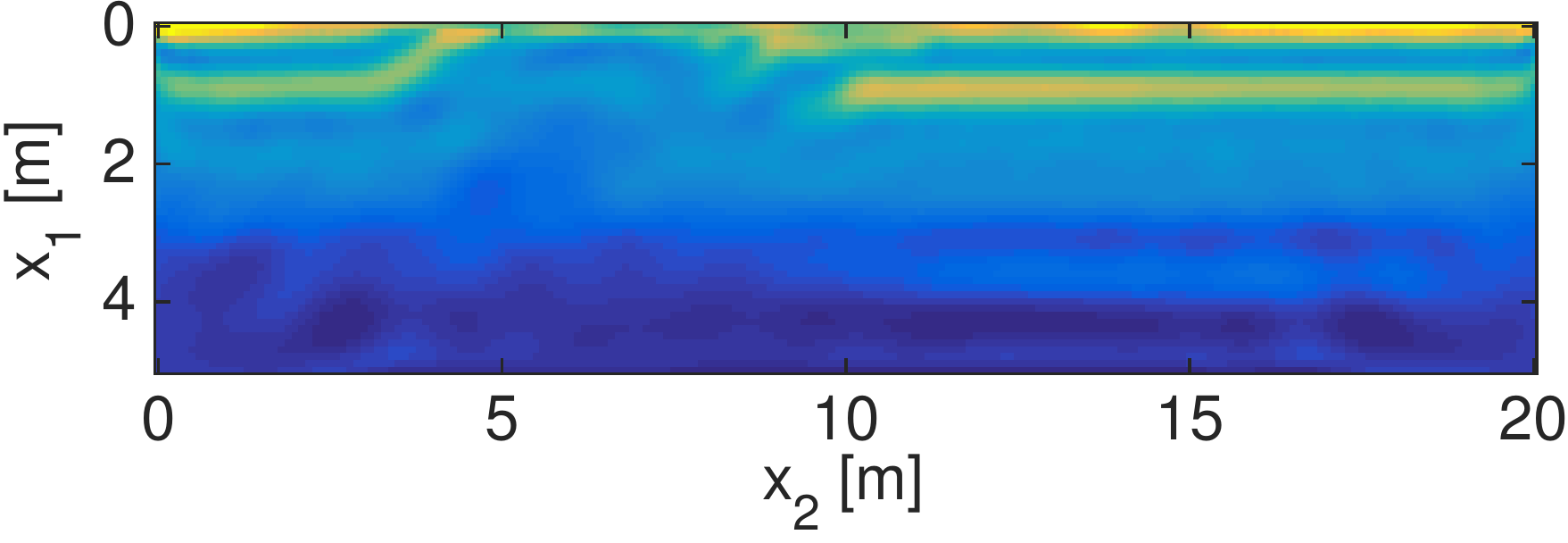}\\
\includegraphics[scale=.3]{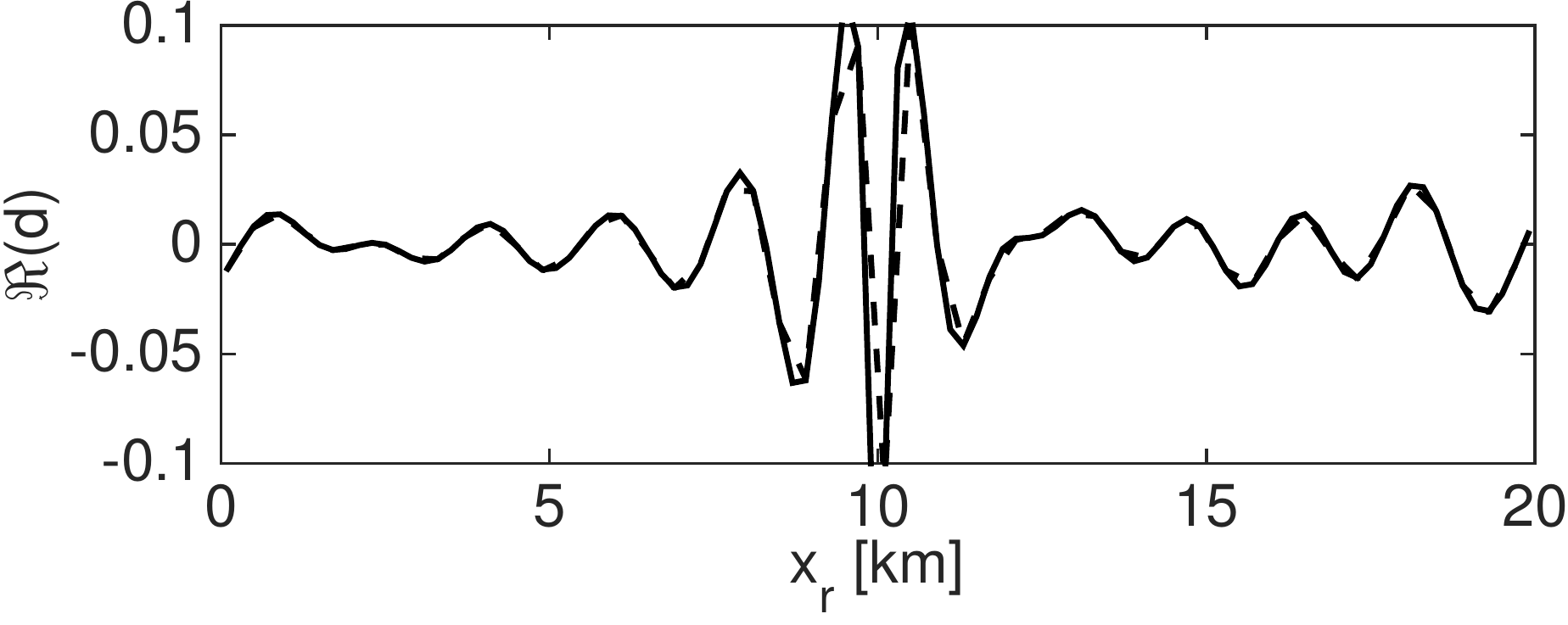}&
\includegraphics[scale=.3]{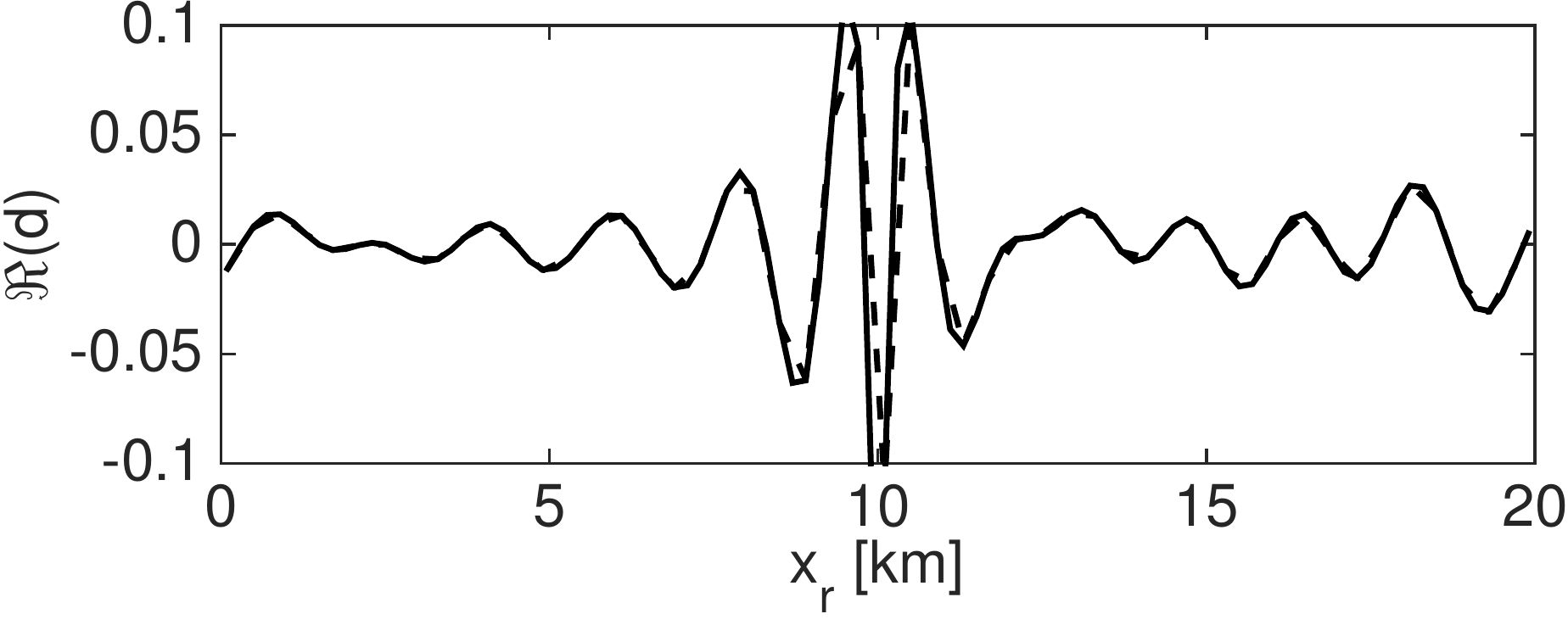}\\
{\small reduced}&
{\small $\lambda=0.1$}\\
\includegraphics[scale=.3]{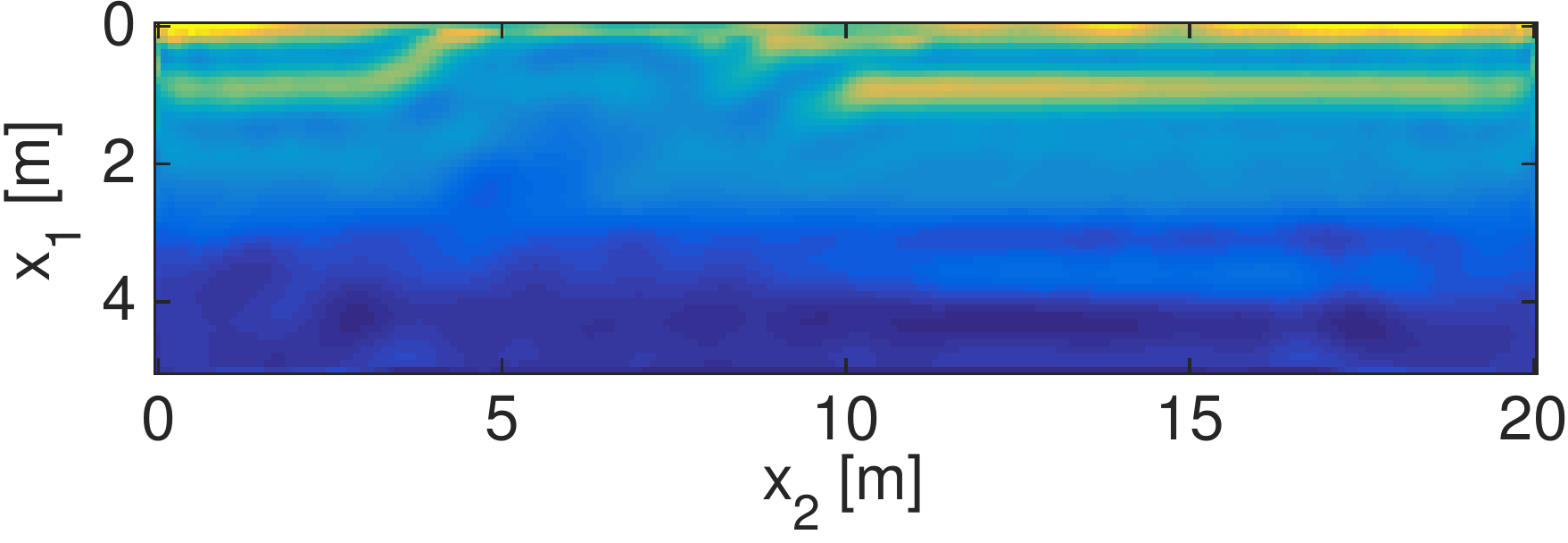}&
\includegraphics[scale=.3]{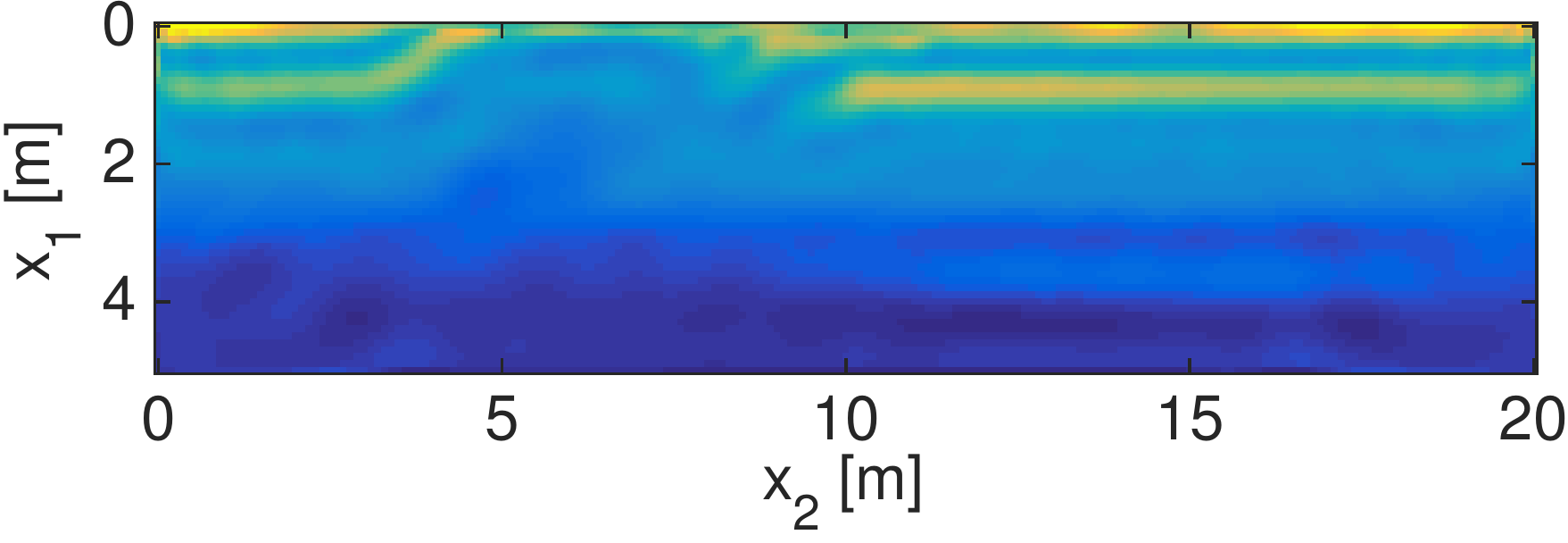}\\
\includegraphics[scale=.3]{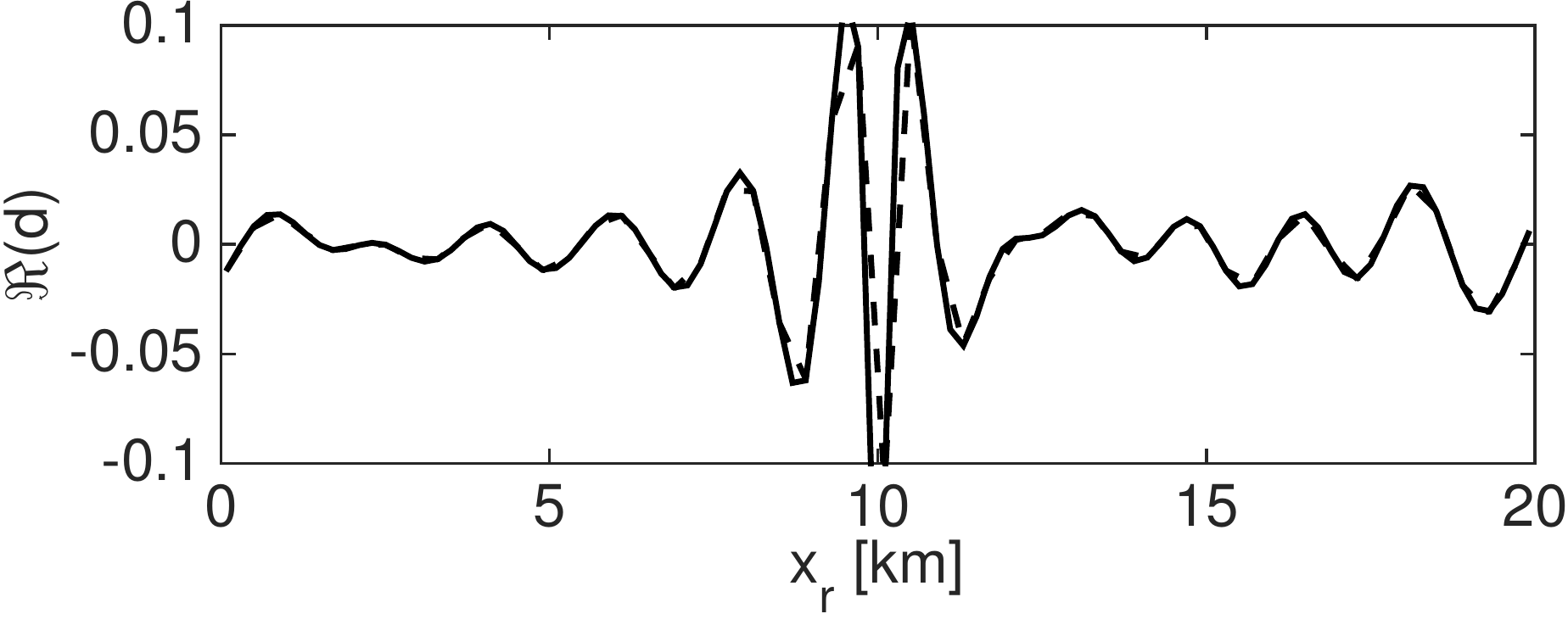}&
\includegraphics[scale=.3]{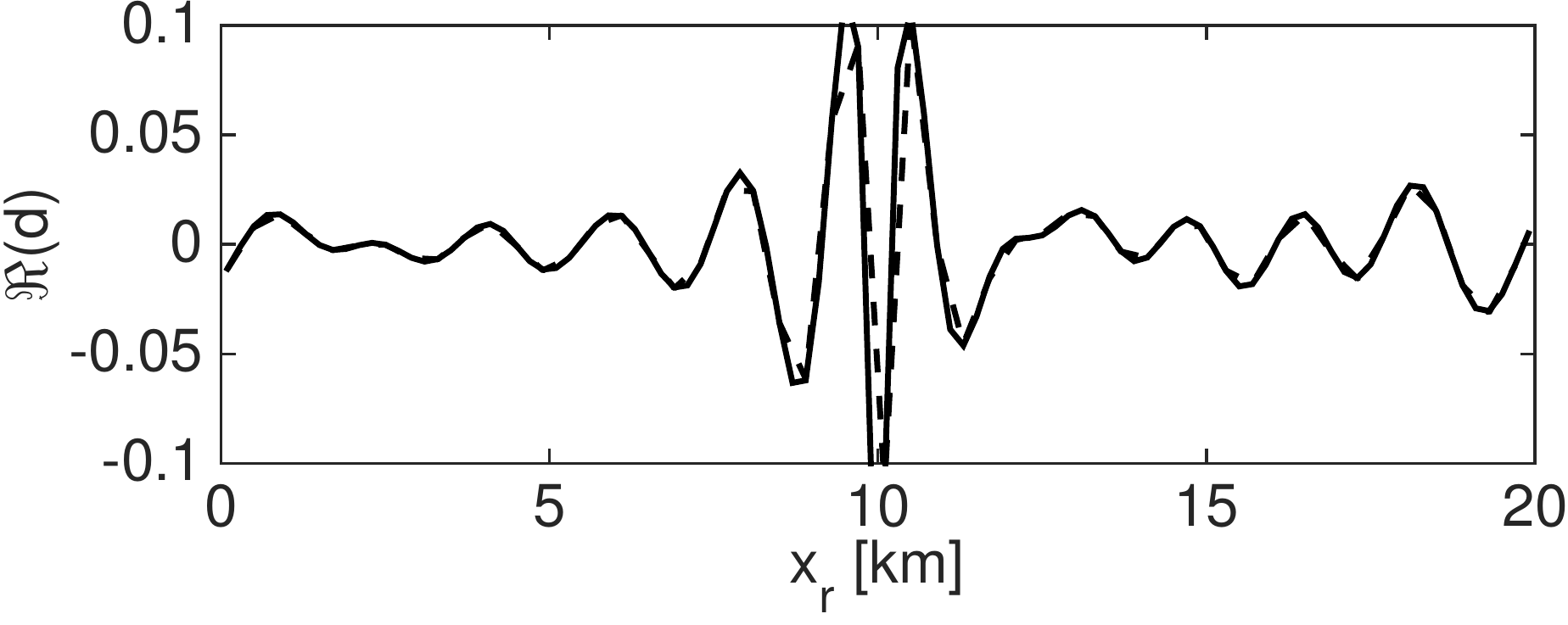}\\
{\small $\lambda=1$}&
{\small $\lambda=10$}\\
\end{tabular}
\caption{QN reconstructions after 50 iterations and corresponding data for a source in the center, starting from the initial iterate I. Both the penalty and reduced methods converge to the same final iterate when starting from this initial guess and are able to fit the data equally well.}
\label{fig:2D_overthrust1}
\end{figure}

\begin{figure}
\centering
\begin{tabular}{cc}
\includegraphics[scale=.3]{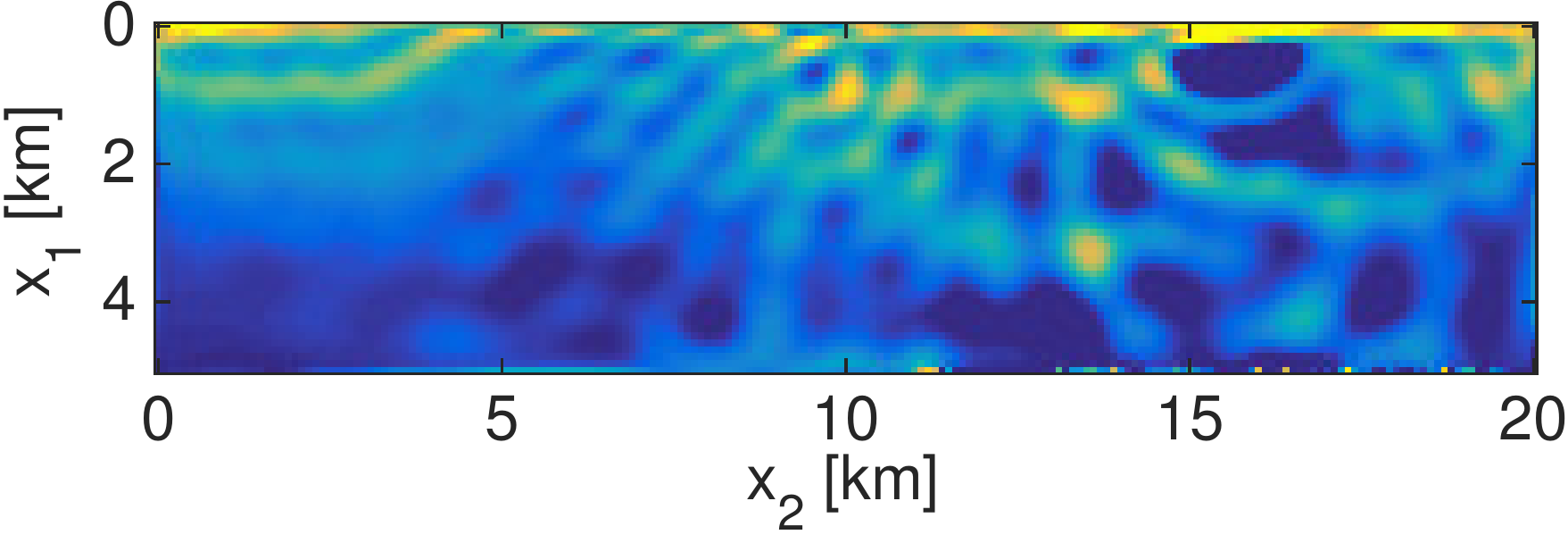}&
\includegraphics[scale=.3]{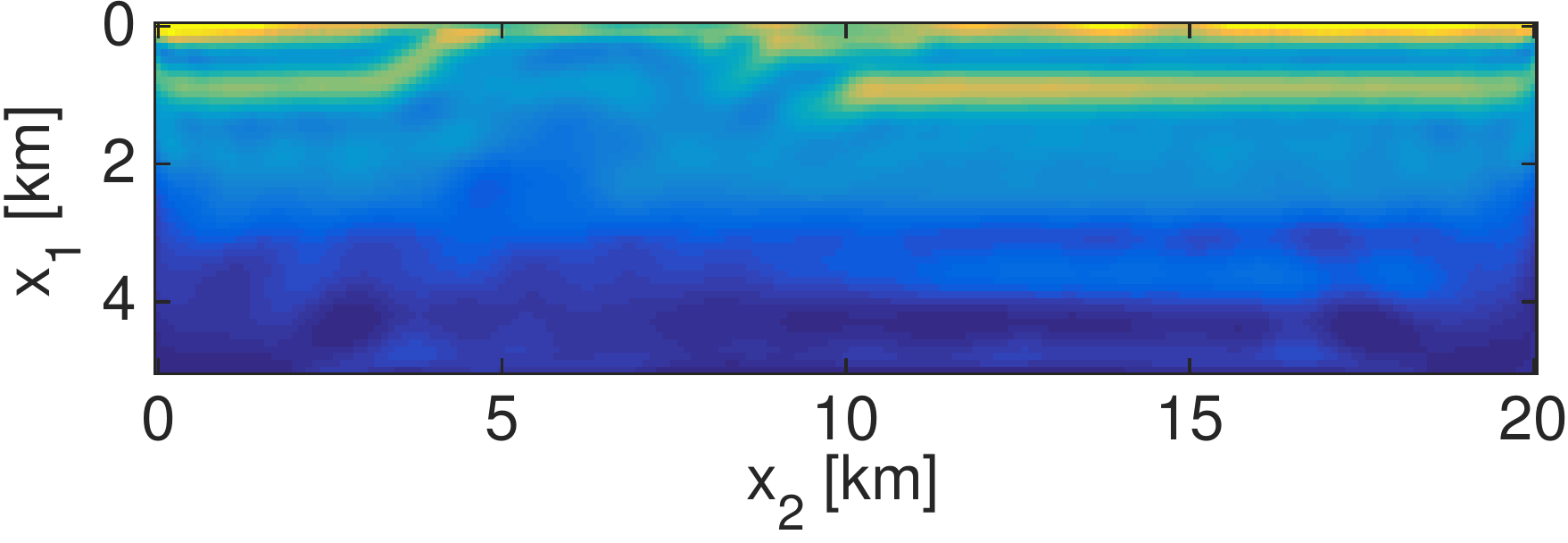}\\
\includegraphics[scale=.3]{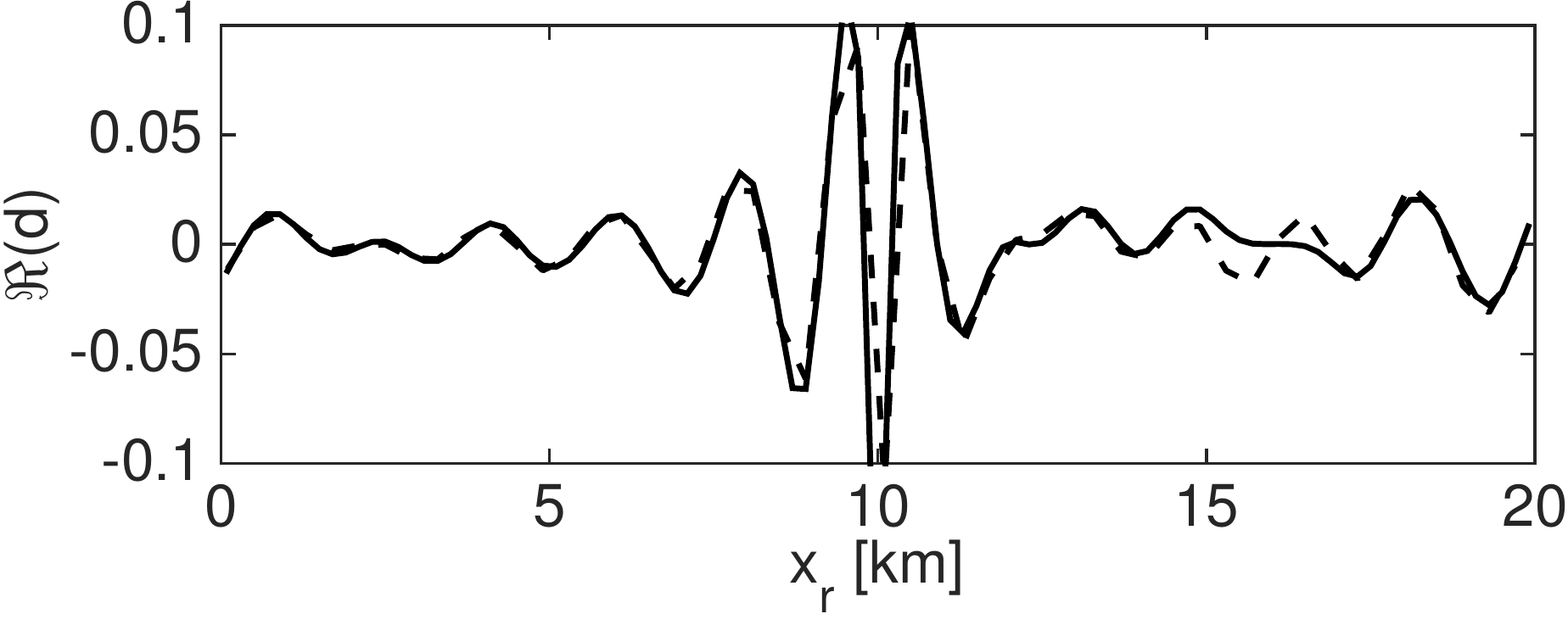}&
\includegraphics[scale=.3]{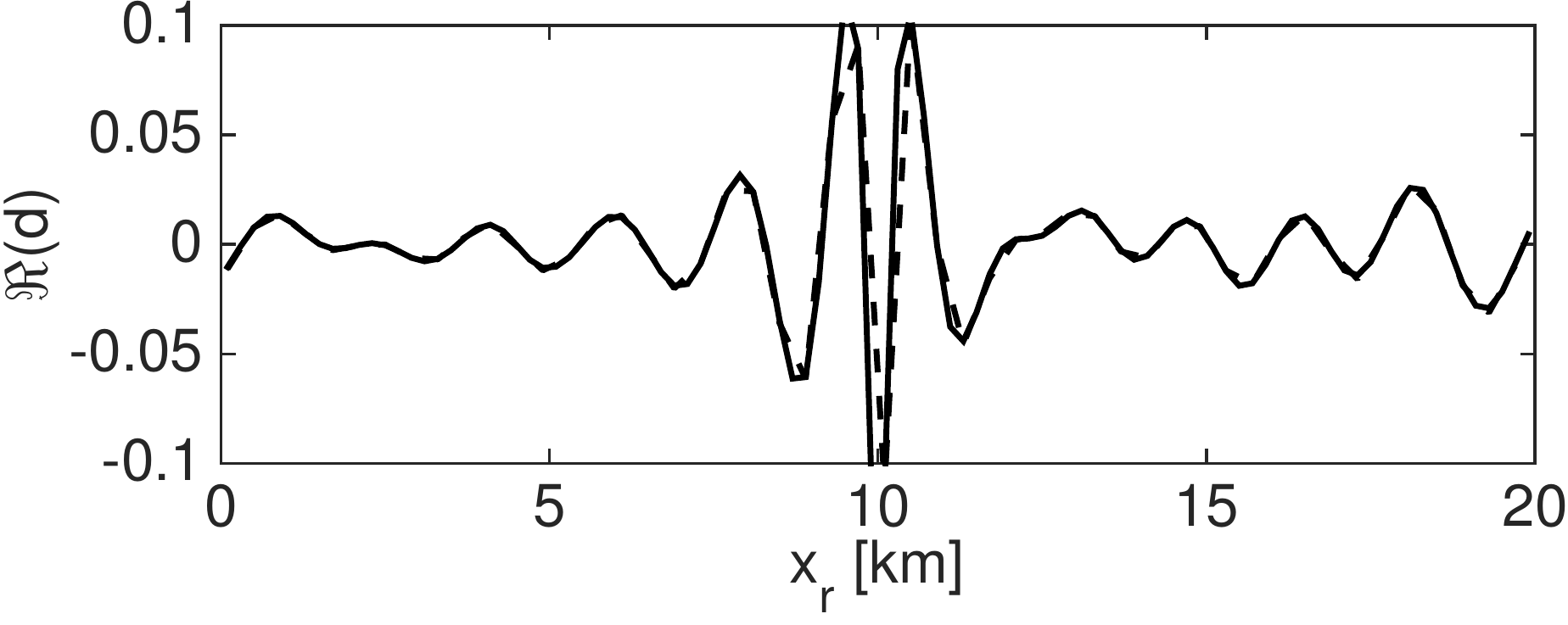}\\
{\small reduced}&
{\small $\lambda=0.1$}\\
\includegraphics[scale=.3]{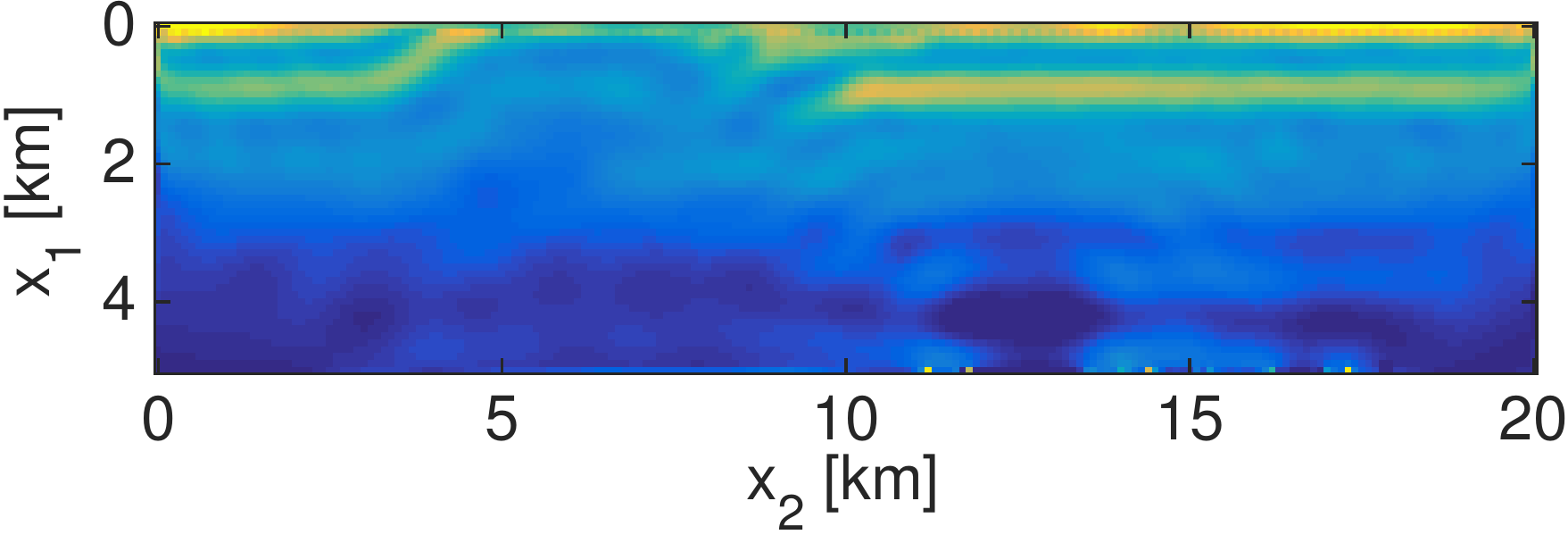}&
\includegraphics[scale=.3]{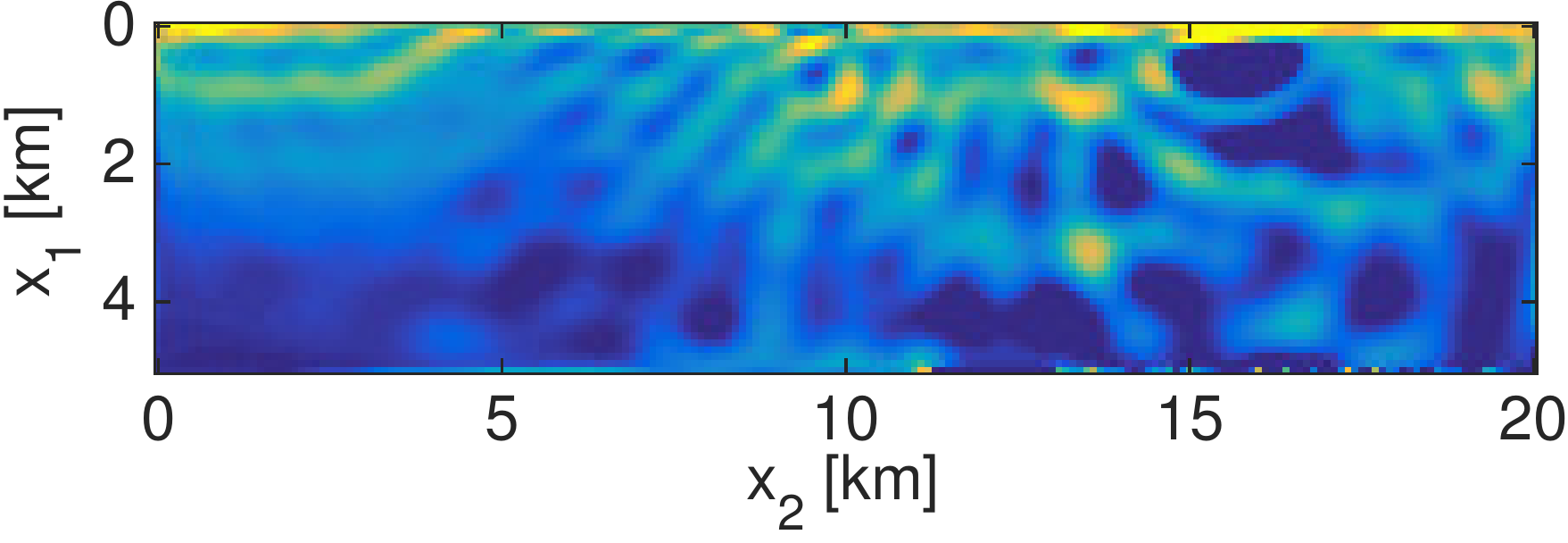}\\
\includegraphics[scale=.3]{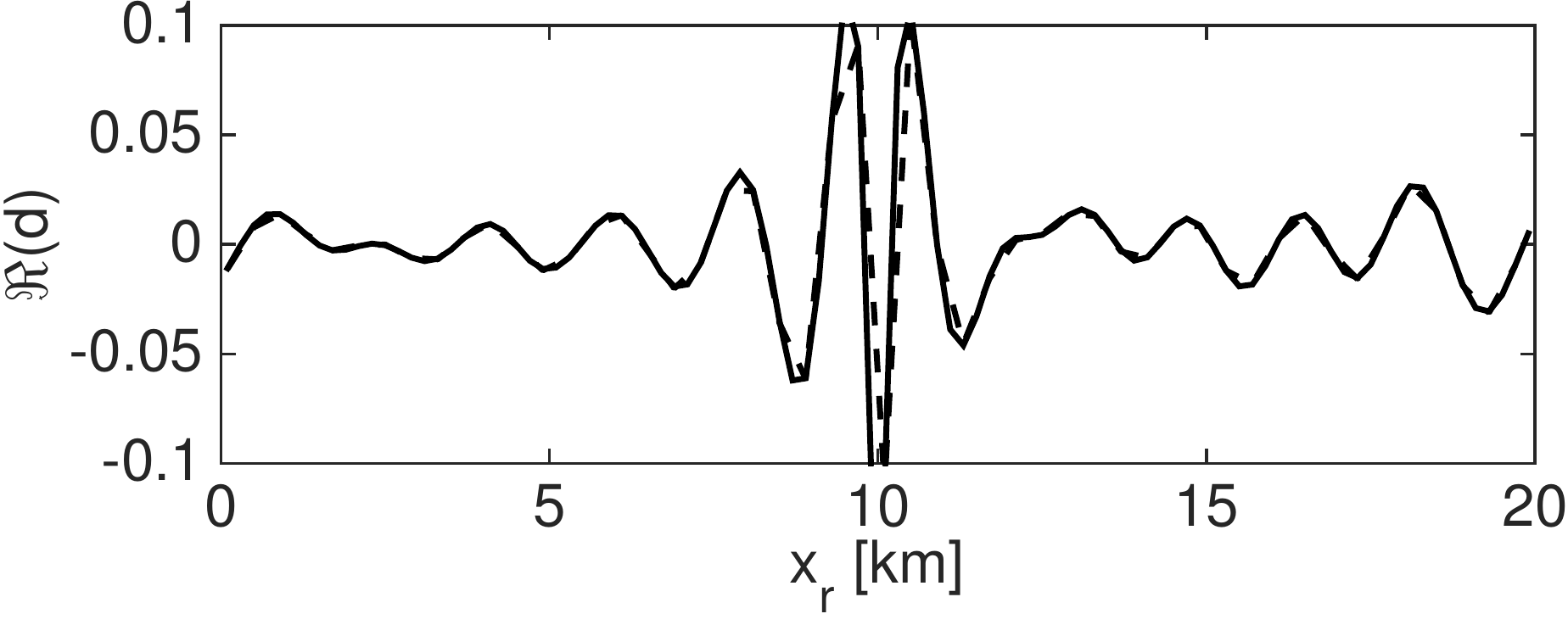}&
\includegraphics[scale=.3]{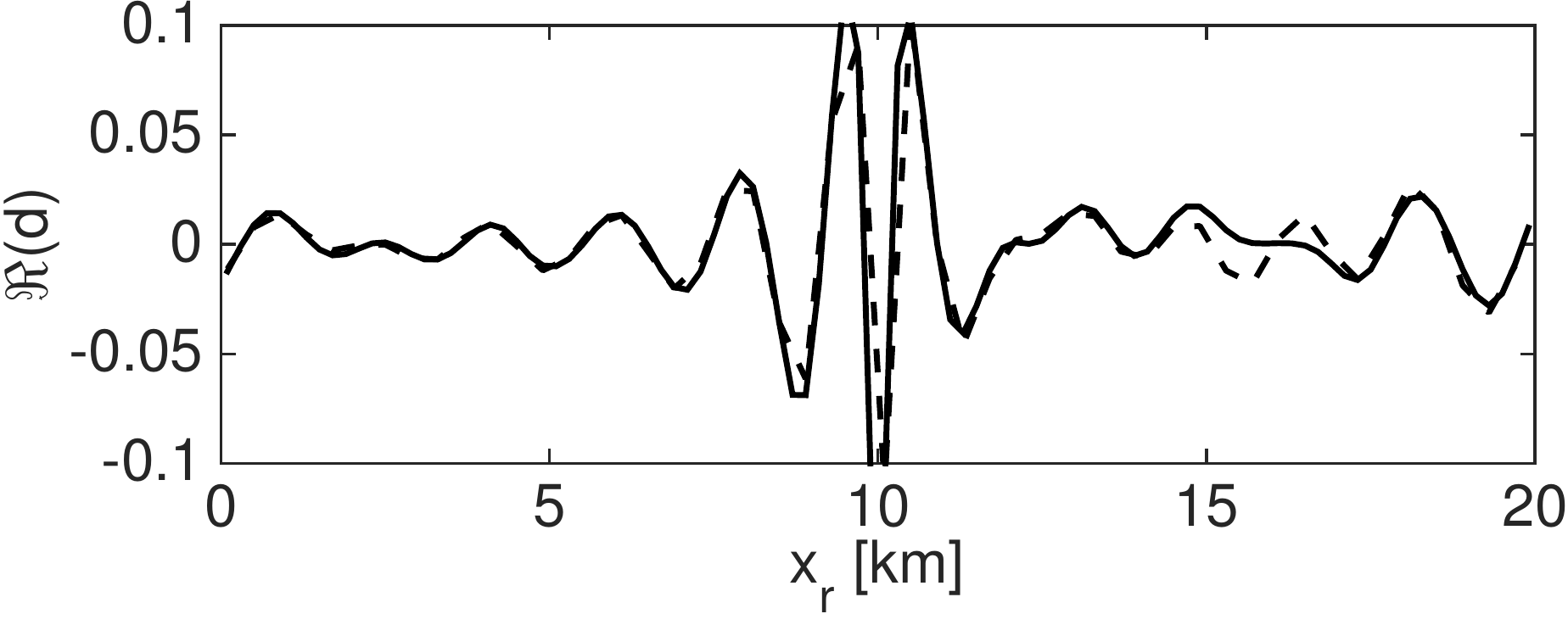}\\
{\small $\lambda=1$}&
{\small $\lambda=10$}\\
\end{tabular}
\caption{QN reconstructions after 50 iterations and corresponding data for a source in the center, starting from the initial iterate II. For small $\lambda$, the penalty method converges to the same final iterate as when starting from initial guess II, showing stability against changes in the initial guess. The reduced method converges to a completely different model, suggesting that the optimization method is stuck in a local minimum. This is confirmed when looking at the data-fit.}
\label{fig:2D_overthrust2}
\end{figure}

\begin{figure}
\centering
\begin{tabular}{cc}
\includegraphics[scale=.3]{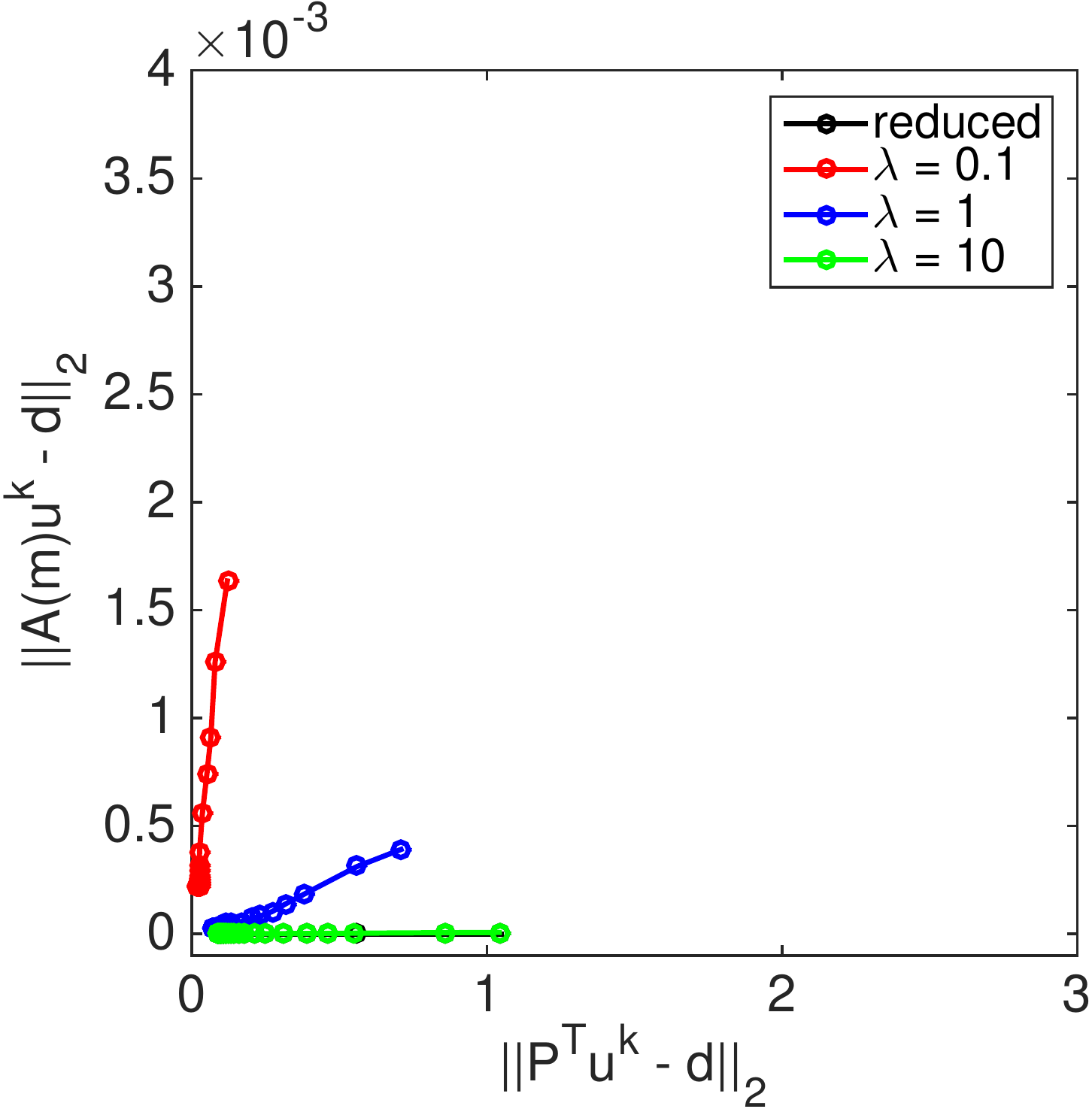}&
\includegraphics[scale=.3]{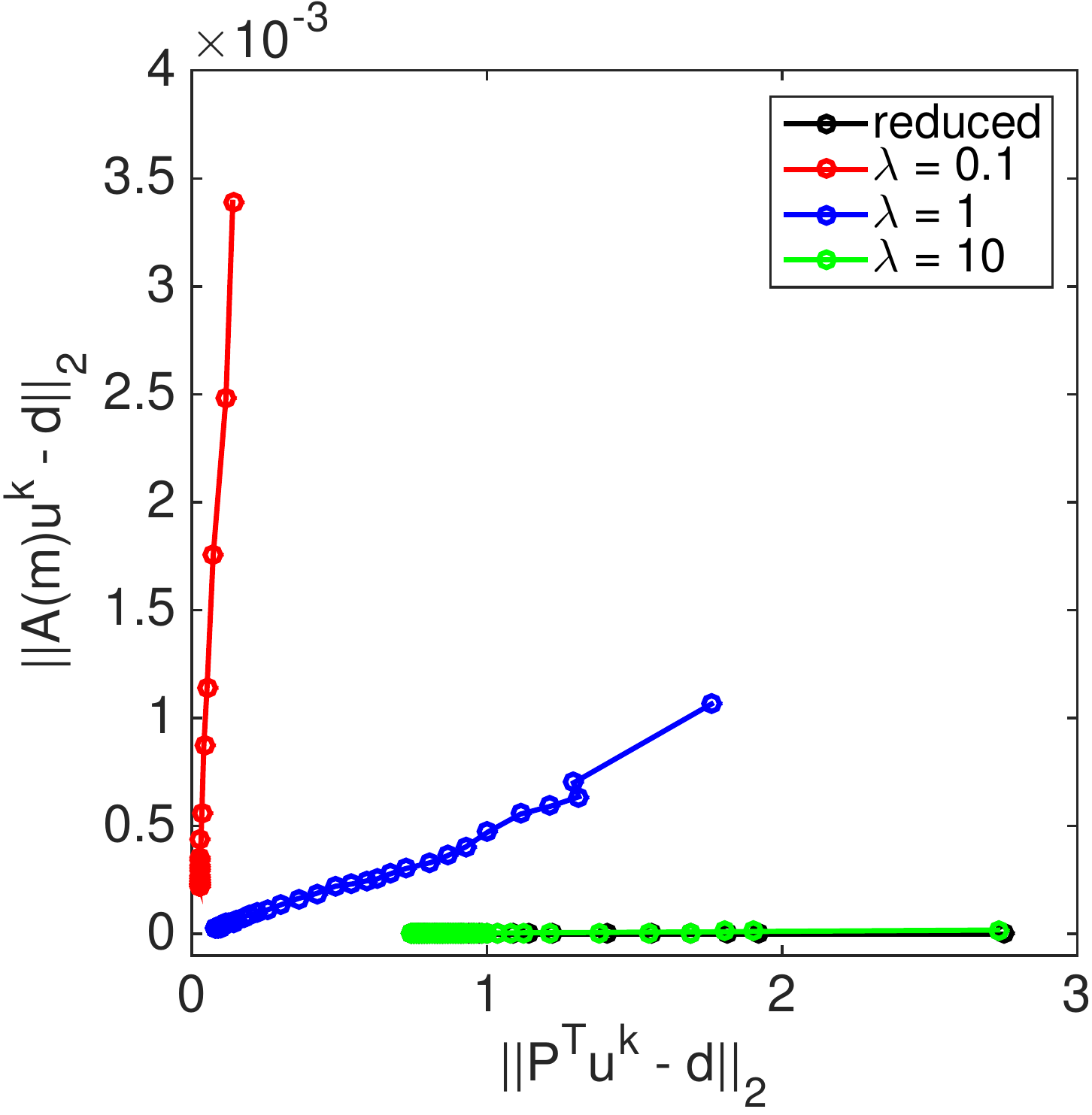}\\
\end{tabular}
\caption{Convergence history in terms of the data-fit and distance to the constraint, starting from initial iterate I (left) and starting from initial iterate II (right). These plots show that, for small $\lambda$, the penalty method is able to reduce both the data and PDE misfit to the same level when starting from either initial guess. The reduced method, however, cannot reduce the data misfit to the same level when starting from initial guess II, suggesting that it got stuck in a local minimum.}
\label{fig:2D_overthrust3}
\end{figure}


\clearpage

\bibliographystyle{unsrt}
\bibliography{mybib}

\begin{thebibliography}{10}

\bibitem{tarantola82}
A.~Tarantola and A.~Valette.
\newblock {Generalized nonlinear inverse problems solved using the least
  squares criterion}.
\newblock {\em Reviews of Geophysics and Space Physics}, 20(2):129--232, 1982.

\bibitem{Haber2004}
E.~Haber, U.M. Ascher, and D.W. Oldenburg.
\newblock {Inversion of 3D electromagnetic data in frequency and time domain
  using an inexact all-at-once approach}.
\newblock {\em Geophysics}, 69(5):1216, 2004.

\bibitem{Epanomeritakis08}
I.~Epanomeritakis, V.~Ak\c{c}elik, O.~Ghattas, and J.~Bielak.
\newblock {A Newton-CG method for large-scale three-dimensional elastic
  full-waveform seismic inversion}.
\newblock {\em Inverse Problems}, 24(3):034015, June 2008.

\bibitem{vanLeeuwen20133DFWI}
T.~van Leeuwen and F.J. Herrmann.
\newblock {3D Frequency-Domain Seismic Inversion with Controlled Sloppiness}.
\newblock {\em SIAM Journal on Scientific Computing}, 36(5):S192--S217, October
  2014.

\bibitem{Abdoulaev2005}
G.S. Abdoulaev, K.~Ren, and A.H. Hielscher.
\newblock {Optical tomography as a PDE-constrained optimization problem}.
\newblock {\em Inverse Problems}, 21(5):1507--1530, October 2005.

\bibitem{Wang2015}
K.~Wang, T.~Matthews, F.~Anis, C.~Li, N.~Duric, and M.A. Anastasio.
\newblock {Breast ultrasound computed tomography using waveform inversion with
  source encoding}.
\newblock In {\em Proc. SPIE}, page 94190C, 2015.

\bibitem{Haber2000}
E.~Haber, U.M. Ascher, and D.~Oldenburg.
\newblock {On optimization techniques for solving nonlinear inverse problems}.
\newblock {\em Inverse Problems}, 16(5):1263--1280, October 2000.

\bibitem{Grote2014}
M.J. Grote, J.~Huber, D.~Kourounis, and O.~Schenk.
\newblock {Inexact Interior-Point Method for PDE-Constrained Nonlinear
  Optimization}.
\newblock {\em SIAM Journal on Scientific Computing}, 36:A1251--A1276, 2014.

\bibitem{Dennis1998}
J.E. Dennis, M.~Heinkenschloss, and L.N. Vicente.
\newblock {Trust-region interior-point SQP algorithms for a class of nonlinear
  programming problems}.
\newblock {\em SIAM Journal on Control and Optimization}, 36(5):1750--1794,
  1998.

\bibitem{Heinkenschloss2008}
M.~Heinkenschloss and D.~Ridzal.
\newblock {An inexact trust-region SQP method with applications to
  PDE-constrained optimization}.
\newblock {\em Numerical Mathematics and Advanced Applications}, pages
  613--620, 2008.

\bibitem{Richter1981}
R.G. Richter.
\newblock {Numerical Identification of a Spatially Varying Diffusion
  Coefficient}.
\newblock {\em Mathematics of Computation}, 36(154):375--386, 1981.

\bibitem{Banerjee2013}
B.~Banerjee, T.F. Walsh, W.~Aquino, and M.~Bonnet.
\newblock {Large Scale Parameter Estimation Problems in Frequency-Domain
  Elastodynamics Using an Error in Constitutive Equation Functional.}
\newblock {\em Computer methods in applied mechanics and engineering},
  253:60--72, January 2013.

\bibitem{vanLeeuwen2013Penalty1}
T.~van Leeuwen and F.J. Herrmann.
\newblock {Mitigating local minima in full-waveform inversion by expanding the
  search space}.
\newblock {\em Geophysical Journal International}, 195(1):661--667, July 2013.

\bibitem{Nocedal}
J.~Nocedal and S.J. Wright.
\newblock {\em Numerical Optimization}.
\newblock Springer Series in Operations Research. Springer, 2006.

\bibitem{Haber2001}
E.~Haber and U.M. Ascher.
\newblock {Preconditioned all-at-once methods for large, sparse parameter
  estimation problems}.
\newblock {\em Inverse Problems}, 17(6):1847--1864, December 2001.

\bibitem{Biros}
G.~Biros and O.~Ghattas.
\newblock {Inexactness issues in the Lagrange-Newton-Krylov-Schur method for
  PDE-constrained optimization}.
\newblock In {\em Large-Scale PDE-Constrained Optimization}, pages 93--114.
  Springer, 2003.

\bibitem{Herzog2010}
R.~Herzog.
\newblock {Lectures Notes Algorithms and Preconditioning in PDE-Constrained
  Optimization}.
\newblock Technical Report July, Chemnitz University of Technology, 2010.

\bibitem{Eckstein2012a}
J.~Eckstein.
\newblock {Augmented Lagrangian and Alternating Direction Methods for Convex
  Optimization : A Tutorial and Some Illustrative Computational Results}.
\newblock Technical Report RRR 32-2012, Rutgers University, 2012.

\bibitem{Curtis2014}
F.E. Curtis, H.~Jiang, and D.P. Robinson.
\newblock {An adaptive augmented Lagrangian method for large-scale constrained
  optimization}.
\newblock {\em Mathematical Programming}, 2014.

\bibitem{Aravkin2012c}
A.Y. Aravkin and T.~van Leeuwen.
\newblock {Estimating nuisance parameters in inverse problems}.
\newblock {\em Inverse Problems}, 28(11):115016, 2012.

\bibitem{Paige1982}
C.C. Paige and M.A. Saunders.
\newblock {LSQR: An Algorithm for Sparse Linear Equations and Sparse Least
  Squares}.
\newblock {\em ACM Transactions on Mathematical Software}, 8(1):43--71, March
  1982.

\bibitem{Fong2011}
D.~C.-L. Fong and M.A. Saunders.
\newblock {LSMR: An Iterative Algorithm for Sparse Least-Squares Problems}.
\newblock {\em SIAM Journal on Scientific Computing}, 33(5):2950--2971, January
  2011.

\bibitem{Bru2014}
R.~Bru, J.~Mar\'{\i}n, J.~Mas, and M.~Tůma.
\newblock {Preconditioned Iterative Methods for Solving Linear Least Squares
  Problems}.
\newblock {\em SIAM Journal on Scientific Computing}, 36(4):A2002--A2022,
  August 2014.

\bibitem{Bjorck1979}
{\AA}.~Bj\"{o}rck and T.~Elfving.
\newblock {Accelerated projection methods for computing pseudoinverse solutions
  of systems of linear equations}.
\newblock {\em BIT}, 19(2):145--163, June 1979.

\bibitem{Gordon2013}
R.~Gordon, D.and~Gordon.
\newblock {Robust and highly scalable parallel solution of the Helmholtz
  equation with large wave numbers}.
\newblock {\em Journal of Computational and Applied Mathematics},
  237(1):182--196, January 2013.

\bibitem{Censor1983}
Y.~Censor, P.P.B. Eggermont, and D.~Gordon.
\newblock {Strong underrelaxation in Kaczmarz's method for inconsistent
  systems}.
\newblock {\em Numerische Mathematik}, 41(1):83--92, February 1983.

\bibitem{Golub1996}
Gene~H. Golub and Charles~F. van Loan.
\newblock {\em {Matrix Computations}}.
\newblock The Johns Hopkins University Press, third edition, 1996.

\bibitem{Rothauge2015}
K.~{Rothauge}, E.~{Haber}, and U.~{Ascher}.
\newblock {Numerical Computation of the Gradient and the Action of the Hessian
  for Time-Dependent PDE-Constrained Optimization Problems}.
\newblock {\em ArXiv e-prints}, September 2015.

\bibitem{esser_automatic_2015}
Ernie Esser, Lluís Guasch, Tristan~van Leeuwen, Aleksandr~Y. Aravkin, and
  Felix~J. Herrmann.
\newblock Automatic salt delineation — {Wavefield} {Reconstruction}
  {Inversion} with convex constraints.
\newblock In {\em SEG Technical Program Expanded Abstracts}, pages 1337--1343.
  Society of Exploration Geophysicists, August 2015.

\end{thebibliography}

\end{document}